\newcommand{\showcomments}{yes}
\newsavebox{\commentbox}
\newtheorem{thm}{Theorem}[section]
\newtheorem{lem}[thm]{Lemma}
\newtheorem{cor}[thm]{Corollary}
\newtheorem{prop}[thm]{Proposition}
\newtheorem*{thmA}{Theorem~A}
\newtheorem*{thmB}{Theorem~B}
\newtheorem*{corC}{Corollary~C}
\theoremstyle{definition}
\newtheorem{defn}[thm]{Definition}
\newtheorem{rem}[thm]{Remark}
\newtheorem{exmp}[thm]{Example}
\newtheorem{claim*}{Claim}
\DeclareMathOperator{\Aut}{Aut}
\DeclareMathOperator{\stabilizer}{Stab}
\DeclareMathOperator{\linspan}{Span}
\newcommand{\field}[1]{\mathbb{#1}}
\newcommand{\integers}{\ensuremath{\field{Z}}}
\newcommand{\naturals}{\ensuremath{\field{N}}}
\newcommand{\reals}{\ensuremath{\field{R}}}
\newcommand{\Euclidean}{\ensuremath{\field{E}}}
\newcommand{\Rmnum}[1]{\mathbf{{\expandafter\@slowromancap\romannumeral #1@}}}
\newcommand{\simp}{\ensuremath{\partial_{_{\vartriangle}}}}
\let\oldmarginpar\marginpar
\renewcommand\marginpar[1]{\-\oldmarginpar[\raggedleft\footnotesize #1]%
{\raggedright\footnotesize #1}}
\begin{document}
\title[Cubulated crystallographic groups]{Cocompactly cubulated crystallographic groups}
\author[Mark~F.~Hagen]{Mark F. Hagen}
           \address{Dept. of Math.\\
                    University of Michigan\\
                    East Hall, 530 Church St.\\
                    Ann Arbor, MI, 48109 USA}
           \email{markfhagen@gmail.com}

\date{\today}
\subjclass[2010]{20F65, 20H15}
\maketitle

\begin{abstract}
We prove that the simplicial boundary of a CAT(0) cube complex admitting a proper, cocompact action by a virtually $\integers^n$ group is isomorphic to the hyperoctahedral triangulation of $S^{n-1}$, providing a class of groups $G$ for which the simplicial boundary of a $G$-cocompact cube complex depends only on $G$.  We also use this result to show that the cocompactly cubulated crystallographic groups in dimension $n$ are precisely those that are \emph{hyperoctahedral}.  We apply this result to answer a question of Wise on cocompactly cubulating virtually free abelian groups.
\end{abstract}

\section{Introduction}\label{sec:introduction}
In this paper, we use the notion of the \emph{simplicial boundary} of a CAT(0) cube complex to study actions of crystallographic groups on CAT(0) cube complexes.  For $n\geq 1$, an \emph{$n$-dimensional crystallographic group} $G$ is a discrete subgroup of the Euclidean group $\reals^n\rtimes O(n,\reals)$ that acts properly and cocompactly by isometries on $\Euclidean^n$.  Bieberbach's theorems~\cite{Bieberbach1911,Bieberbach1912} tell us that there is an exact sequence
\[1\rightarrow T_{_G}\rightarrow G\stackrel{\psi}{\rightarrow} P_{_G}\rightarrow 1,\]
where the \emph{translation subgroup} $T_{_G}=G\cap\reals^n$ and the \emph{point group} (or \emph{holonomy group}) $P_{_G}$ is a finite subgroup of $O(n,\reals)$.  Moreover, $T_{_G}$ is the unique maximal abelian normal subgroup of $G$.  Bieberbach showed that, for any $n$, there are finitely many isomorphism classes of $n$-dimensional crystallographic groups and, up to conjugation by affine transformations, each crystallographic group acts in a unique way on $\Euclidean^n$.  Conversely, extensions of $\integers^n$ by finite groups acting faithfully by isometries on $\Euclidean^n$ are crystallographic groups~\cite{Zassenhaus}.  Because of Zassenhaus's result, cubulations of crystallographic groups are closely related to the more general question of the possibility of cocompactly cubulating virtually free abelian groups, and several of our conclusions about crystallographic groups apply in this more general context.

Our main goal is to characterize the crystallographic groups that a admit proper, cocompact action on a CAT(0) cube complex $\mathbf X$ by describing the \emph{simplicial boundary} $\simp\mathbf X$ of $\mathbf X$.  We then examine the action of $G$ on the simplicial complex $\simp\mathbf X$ to obtain a description of the possible point groups.  Conversely, there is a standard cubulation of crystallographic groups and, if the point group is of one of the admissible types, then this cubulation is cocompact.

\subsection{Hyperoctahedral boundary}
The simplicial boundary of a CAT(0) cube complex, introduced in~\cite{HagenBoundary}, is an invariant of the 1-skeleton, encoding non-hyperbolic behavior, and has some features in common with the Tits boundary of a CAT(0) space.  An action on a cube complex always induces an action on the simplicial boundary, but it is unknown, in general, when $\simp\mathbf X$ is a quasi-isometry invariant of $\mathbf X^1$.  In particular, we have the following problem:

\begin{center}
\emph{For which groups $G$ is it true that $\simp\mathbf X$ is isomorphic to $\simp\mathbf Y$ for any two CAT(0) cube complexes $\mathbf X$ and $\mathbf Y$ on which $G$ acts properly and cocompactly?}\\
\end{center}

Our first result, Theorem~\ref{thm:boundaryoctahedron}, solves this problem for all virtually-$\integers^n$ groups:

\begin{thmA}\label{thm:thmA}
Let $n\geq 1$.  If the CAT(0) cube complex $\mathbf X$ admits a proper, cocompact action
by a virtually-$\integers^n$ group, then $\simp\mathbf X$ is isomorphic to the $(n-1)$-dimensional hyperoctahedron $\mathbf Q_n$.
\end{thmA}

Theorem~A is related to Theorem~\ref{thm:product_action}, which says that if $V$ is a virtually-$\integers^n$ group, then $V$ is cocompactly cubulated if and only if $V$ acts properly and cocompactly on $\mathbf R_n$, the standard tiling of $\Euclidean^n$ by $n$-cubes.  This assertion appears as Lemma~16.12 in~\cite{WiseIsraelHierarchy}, where Wise deduces it from the Flat Torus Theorem~\cite{BridsonHaefliger}.  We give an alternative proof, deducing it from Theorem~A using results of Caprace-Sageev~\cite{CapraceSageev} and~\cite{HagenBoundary}.

Although Theorem~A applies to a rather specific class of groups and cube complexes, we present a proof which seems amenable to generalization to the situation in which $G$ is a group acting properly and cocompactly on two distinct cube complexes, and indeed it seems a similar approach may answer the above question positively for many cocompactly cubulated groups.  In Section~\ref{sec:bounarychar}, we also sketch a quick proof of Theorem~A that uses the somewhat heavy machinery of~\cite{CapraceSageev}; however, our actual argument proceeds directly from the definition of the simplicial boundary.

\subsection{Cocompactly cubulated crystallographic groups}
The $n$-dimensional crystallographic group $G$ is \emph{hyperoctahedral}, in a sense made precise in Section~\ref{sec:background}, if $P_{_G}$ injects into $\Aut(\mathbf Q_n)\cong
O(n,\integers)$ in a way that is consistent with the action of $P_{_G}$ on $\Euclidean^n$ induced by $\theta$.  Theorem~\ref{thm:main}, which we deduce from Theorem~\ref{thm:boundaryoctahedron}, is:

\begin{thmB}\label{thmB}
The following are equivalent, for an $n$-dimensional crystallographic group $G$:
\begin{enumerate}
\item $G$ is hyperoctahedral.
\item $G$ acts properly and cocompactly on a CAT(0) cube complex.
\end{enumerate}
\end{thmB}

The conclusion of Theorem~\ref{thm:product_action} holds for crystallographic groups by combining Theorem~\ref{thm:main} with Theorem~\ref{thm:cubulatingcrystallographic}: the former says that a cocompactly cubulated crystallographic group $G$ is hyperoctahedral, after which the latter provides a proper, cocompact action of $G$ on $\mathbf R_n$.  Thus, in the special case of crystallographic groups, Theorem~\ref{thm:product_action} has a proof that sidesteps some of the machinery used in the proof below or in~\cite{WiseIsraelHierarchy}.

\begin{figure}
  \includegraphics[width=0.8\textwidth]{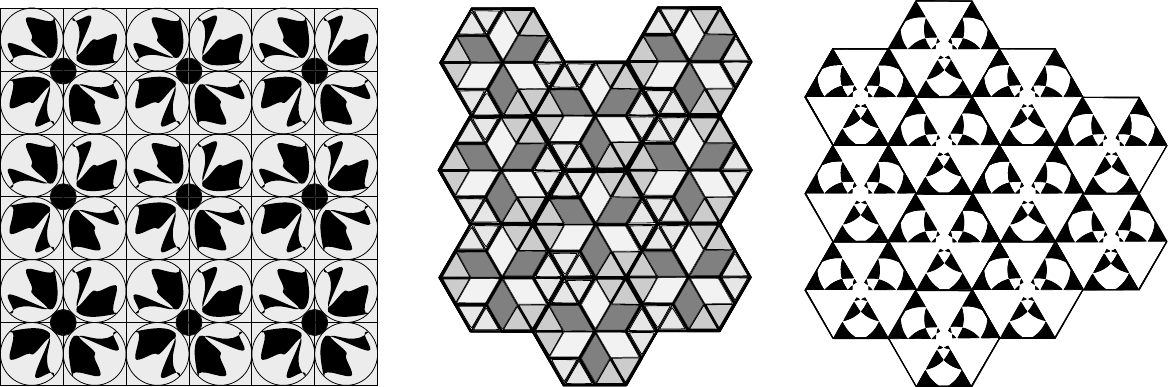}\\
  \caption{Parts of three different periodic tilings of $\Euclidean^2$.  The automorphism group of the tiling shown at left acts properly and cocompactly on $\mathbf R_2$, but the automorphism groups of the center and right tilings do not act properly and cocompactly on cube complexes.  They do, however, act properly on $\mathbf R_3$.}\label{fig:truncatedsquare}
\end{figure}

The proof of Theorem~\ref{thm:cubulatingcrystallographic} is an application of the version of Sageev's construction of a $G$-cube-complex discussed in~\cite{HruskaWiseAxioms}.  To cubulate a crystallographic group $G$, one uses a natural collection of \emph{geometric walls}, which in this case are codimension-1 affine subspaces of $\Euclidean^n$.  As explained in Section~\ref{sec:cubulating}, one always obtains a proper action of the $n$-dimensional crystallographic group $G$ on $\mathbf R_N$ for some $N\geq n$ (see also Section~16 of~\cite{WiseIsraelHierarchy}); if $G$ is hyperoctahedral, then we find we can take $N=n$, so that this ``standard cubulation'' of $G$ is cocompact.

\begin{rem}\label{rem:coxeter_cubes}
The above cubulation is related to that for Coxeter groups constructed by Niblo-Reeves~\cite{NibloReeves:coxeter_cubes}.  Williams gave a condition~\cite{WilliamsCoxeter} on the Coxeter group guaranteeing cocompactness of the latter cubulation; Caprace-M\"{u}hlherr~\cite{CapraceMuhlherr} and, independently, Bahls~\cite{Bahls} provided a more easily verified condition on a Coxeter group that is equivalent to cocompactness of the Niblo-Reeves cubulation.  For crystallographic groups, the existence of a cocompact cubulation is equivalent to cocompactness of the ``standard cubulation'', by Theorems~\ref{thm:main} and Theorem~\ref{thm:cubulatingcrystallographic}, or by Theorem~\ref{thm:product_action}.  Since the Niblo-Reeves cubulation coincides with the standard cubulation for crystallographic Coxeter groups, it is natural to ask whether the existence of a cocompact cubulation of a Coxeter group is equivalence to cocompactness of the Niblo-Reeves complex and whether this question be approached by studying the boundary of the Niblo-Reeves complex.
\end{rem}

\subsection{Obtaining cocompactness by adding dimensions}  The issue of cubulating virtually-$\integers^n$ groups is raised in~\cite{WiseIsraelHierarchy}.  Wise's consideration of actions of virtually free abelian groups on cube complexes arose from questions about \emph{sparse} cube complexes, and the connection to crystallographic groups comes from an example, due to Dunbar, of a torsion-free 3-dimensional crystallographic group that does not act properly and cocompactly on a CAT(0) cube complex.  This example is discussed in Section~16 of~\cite{WiseIsraelHierarchy}, wherein it is also shown that, if $V$ is virtually $\integers^n$ and torsion-free, then there exists a virtually free abelian group $\ddot V$ containing $V$ and an integer $N\geq n$ such that $\ddot V$ acts properly and cocompactly on $\mathbf R_N$.  In view of this fact, Wise asked:

\begin{center}
\emph{For any virtually $\integers^n$ group $V$, does there exist $m$ such that $\integers^m\times V$ is cocompactly cubulated?}
\end{center}

We answer this question negatively in Example~\ref{exmp:trianglegroup}, using
Theorem~B and the standard cubulation.  More specifically, we show in that example that for all $m\geq 0$, the group $\integers^m\times W$ is not cocompactly cubulated, where $$W\cong\langle a,b,c\mid[a,b],c^6,cac^{-1}=b,cbc^{-1}=a^{-1}b\rangle\cong\integers^2\rtimes\integers_6.$$  The main thrust of this example is that a cocompact cubulation of $\integers^m\times W$ would yield, via Theorem~B, an action of $W$ on a 3-cube with $c$ acting as a 6-fold rotation; this is impossible.  We also answer Wise's question negatively for the Dunbar example discussed in~\cite[Example~16.11]{WiseIsraelHierarchy}.  On the other hand, we obtain the following as a consequence of Corollary~\ref{cor:d1}, thus answering a weaker version of Wise's question affirmatively:

\begin{corC}\label{corD}
Let $G$ be an $n$-dimensional crystallographic group.  Then there exists $N\geq n$ such that $\integers^m\rtimes G$ is a cocompactly cubulated crystallographic group for all $m\geq N-n$ and a suitably-defined action of $G$ on $\integers^m$.
\end{corC}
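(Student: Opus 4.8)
The plan is to upgrade the standard cubulation of $G$ into a \emph{cocompact} one by enlarging $G$ just enough to fill out the full translation lattice of the cube complex produced by Sageev's construction. By Corollary~\ref{cor:d1}, the standard cubulation yields a proper action of the $n$-dimensional crystallographic group $G$ on $\mathbf R_N$ for some $N\geq n$; set $m_0=N-n$. Through this action the translation subgroup $T_G\cong\integers^n$ of $G$ sits inside the full translation lattice $\integers^N$ of $\mathbf R_N$ as a rank-$n$ sublattice, while the point group $P_G$ acts as a finite group of cubical automorphisms, permuting the $N$ hyperplane-classes of $\mathbf R_N$ together with their orientations.

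First I would form the overgroup $\hat G=\langle\integers^N,G\rangle\leq\isom(\Euclidean^N)$ generated by the image of $G$ and the full translation lattice. Since its translation part is exactly the discrete lattice $\integers^N$ and its point group is the finite group $P_G$, the group $\hat G$ is discrete; and because it contains the cocompact lattice $\integers^N$ it acts properly and cocompactly on $\Euclidean^N$. Thus $\hat G$ is an $N$-dimensional crystallographic group acting properly and cocompactly by cubical automorphisms on $\mathbf R_N$, so it is cocompactly cubulated. By Theorem~B (applied to $\hat G$) this is the assertion that $\hat G$ is \emph{hyperoctahedral}, the embedding $P_G\hookrightarrow O(N,\integers)\cong\Aut(\mathbf Q_N)$ being the one recording the induced signed-permutation action on the $N$ hyperplane-classes of $\mathbf R_N$, which by Theorem~A is consistent with the action of $P_G$ on $\Euclidean^N$.

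Next I would identify $\hat G$ with $\integers^{m_0}\rtimes G$. Choosing a $P_G$-invariant complement $L\cong\integers^{N-n}$ to $T_G$ inside $\integers^N$, the sublattice $L$ is normal in $\hat G$, meets $G$ trivially (as $G\cap\integers^N=T_G$), and satisfies $LG=\hat G$; hence $\hat G\cong L\rtimes G\cong\integers^{m_0}\rtimes G$, where $G$ acts on $\integers^{m_0}$ through $P_G$ by the induced signed-permutation action on $L$. This is the ``suitably-defined'' action, and it settles the case $m=m_0=N-n$. For $m>m_0$ I would adjoin a trivial cubical factor: taking $G$ to act trivially on an additional $\integers^{m-m_0}$, one has $\integers^m\rtimes G\cong\integers^{m-m_0}\times\hat G$, and the product of the translation action on $\mathbf R_{m-m_0}$ with the action above gives a proper, cocompact action on $\mathbf R_{m-m_0}\times\mathbf R_N=\mathbf R_{n+m}$; this product is again a hyperoctahedral $(n+m)$-dimensional crystallographic group.

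The hard part will be the middle step, namely producing the $P_G$-invariant complement $L$ so that $\hat G$ genuinely splits as $\integers^{m_0}\rtimes G$ with $G$ a subgroup. Since $P_G$ is finite but acts over $\integers$ rather than a field, Maschke's theorem does not apply and an invariant sublattice need not have an invariant complement in general; the point is that the standard cubulation of Corollary~\ref{cor:d1} is arranged so that the $N-n$ additional hyperplane-directions span a $P_G$-invariant sublattice complementary to $T_G$ (or can be made so after increasing $N$, which only enlarges $m_0$). Verifying this splitting, together with the routine checks that the resulting $\integers^m\rtimes G$ is crystallographic of dimension $n+m$ and that the embedding $P_G\hookrightarrow O(N,\integers)$ is consistent with the Euclidean action, completes the argument.
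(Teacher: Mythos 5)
Your overall route---embed $G$ into $\Aut(\mathbf R_N)$ via the standard cubulation (which is Lemma~\ref{lem:cubulating}, not Corollary~\ref{cor:d1}), adjoin extra translations to build a crystallographic overgroup acting properly and cocompactly on $\mathbf R_N$, then stabilize with trivial $\integers$-factors---is the paper's route, but the middle step you flag as hard is a genuine gap as you have set it up. Taking $\hat G=\langle\integers^N,G\rangle$ commits you to an honest direct-sum decomposition $\integers^N=L\oplus T_G$ with $L$ invariant: since $G\cap\integers^N=T_G$, the requirement $LG=\hat G$ is equivalent to $L+T_G=\integers^N$. Such an $L$ need not exist: the image of $T_G$ need not even be a primitive sublattice of $\integers^N$, and even when it is, an integral invariant complement can be obstructed although a rational one exists (as you note, Maschke only works over $\rationals$). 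Your assertion that the standard cubulation ``is arranged'' so that the extra hyperplane directions span such a complement fails in the paper's own key example: for the hexagonal group $W\cong\integers^2\rtimes\integers_6$ of Example~\ref{exmp:trianglegroup} one gets $N=3$, the image of $T_W$ spans rationally the zero-sum plane of $\integers^3$ (the unique plane on which the order-$6$ element acts with order $6$), the only $P_W$-invariant line is the cube diagonal, and $\integers(1,1,1)+T_W$ lies in the sublattice $\{x+y+z\equiv 0 \bmod 3\}$, hence has index at least $3$ in $\integers^3$. So $\langle\integers^3,W\rangle$ does not decompose as $L\rtimes W$ for your candidate $L$, and ``increasing $N$'' comes with no mechanism to repair this.

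The fix---and this is exactly how the paper's proof of Corollary~\ref{cor:d1} proceeds---is to give up the full lattice. Inside $\ddot G=\Aut(\mathbf R_N)$, take $S\cong\integers^{N-n}$ to be a maximal group of translations of $\ddot G$ orthogonal to the $G$-invariant subspace $\mathbb F\cong\Euclidean^n$ (in effect, the lattice points of the real Maschke complement). Then $S\cap T_G=\{1\}$, $G$ normalizes $S$ because $G$ preserves $\mathbb F$ and hence its orthogonal complement, and $S\times T_G$ is a rank-$N$ group of translations: \emph{finite index} in the translation lattice, which is all that cocompactness requires. Thus $\widehat G=S\rtimes G$ acts properly and cocompactly on $\mathbf R_N$ with no splitting of $\integers^N$ needed; in the hexagonal example this yields the index-$3$ subgroup $\integers\rtimes W\leq\langle\integers^3,W\rangle$ that the paper exhibits. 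With this replacement, your first and third paragraphs go through, and your stabilization $\integers^M\times(\integers^{m_0}\rtimes G)\cong\integers^{M+m_0}\rtimes G$ is precisely the paper's final step; you should also record why $G\rightarrow\Aut(\mathbf R_N)$ is injective (the paper checks that the kernel acts trivially on $\mathbb F$, where $G$ acts faithfully), which you assumed tacitly.
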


In the case of the group $W$ discussed above, we give an action of $W$ on $\integers$ so that the resulting group $\integers\rtimes W$ acts properly and cocompactly on $\mathbf R_3$.
\subsection{Plan of the paper}
Section~\ref{sec:background} discusses hyperoctahedra, CAT(0) cube complexes, and groups acting on these objects, and also reviews the notion of the cube complex dual to a geometric wallspace and the linear separation property that guarantees properness of the action on the dual cube complex.  Section~\ref{sec:background} also contains a self-contained description of the simplicial boundary of a cube complex.  Section~\ref{sec:bounarychar} is devoted to the proof of Theorem~A, and in Section~\ref{sec:actiononboundary}, we prove Theorem~\ref{thm:main}, which establishes that cocompactly cubulated crystallographic groups are hyperoctahedral.  In Section~\ref{sec:cubulating}, we cocompactly cubulate hyperoctahedral groups and discuss Corollary~C.

\subsection*{Acknowledgments.}  I am grateful to Dani Wise for reading through this paper with me and giving valuable criticism, as well as for suggesting this problem and sharing the preprint~\cite{WiseIsraelHierarchy}, which contains, among many other things, the discussion that motivated this work.  I am also grateful to the anonymous referee for their careful reading and valuable comments and corrections.  This work was supported by a Schulich Graduate Fellowship through McGill University.

\section{Preliminaries}\label{sec:background}

\subsection{Hyperoctahedral groups}\label{sec:hyperoctahedral}

\begin{defn}[Hyperoctahedron]\label{defn:hyperoctahedron}
The \emph{0-dimensional hyperoctahedron} $\mathbf Q_1$ is the simplicial complex consisting of two nonadjacent 0-simplices.  For $n\geq 1$, the \emph{$n$-dimensional hyperoctahedron} $\mathbf Q_{n+1}$ is the simplicial join of $\mathbf Q_n$ and $\mathbf Q_1$, i.e. $\mathbf Q_{n+1}=\mathbf Q_n\star\mathbf Q_1$. 
Here, the \emph{simiplicial join} of the flag complexes $A,B$ is the flag complex $A\star B$ determined by the join of the graphs $A^1$ and $B^1$.  (Recall that a \emph{flag complex} is a simplicial complex in which any $n+1$ pairwise-adjacent 0-simplices span an $n$-simplex, and that each simplicial graph is the 1-skeleton of a unique flag complex.)  Note that $\mathbf Q_n$ is the link of a 0-cube in $\mathbf R_n$ and is thus the complex dual to the boundary of an $n$-cube.
\end{defn}

It is easily seen from the latter characterization of $\mathbf Q_n$ that $\Aut(\mathbf Q_n)$ is the automorphism group of an $n$-cube.  It follows that $\Aut(\mathbf Q_n)$ is isomorphic to the wreath product $\integers_2\wr S_n$, i.e. to $\integers_2^n\rtimes S_n$, where $S_n$ acts on $\integers_2^n$ by permuting the factors.  From this characterization, one shows that $\Aut(\mathbf Q_n)\cong O(n,\integers)$, the group of orthogonal matrices whose entries are 0 or $\pm1$.  A lucid survey of the representation theory of $\Aut(\mathbf Q_n)$ was provided by Baake in~\cite{Baake}, which contains the details of this and related representations.

\subsubsection{Hyperoctahedral crystallographic groups}
We view $\Aut(\mathbf Q_n)$ as the group $O(n,\integers)$ of $n\times n$ signed permutation matrices, i.e. the group of orthogonal matrices with integer entries, which acts by permutations on $\{\pm\dot e_i\}_{i=1}^n$, where $\{\dot e_i\}_{i=1}^n$ is the standard basis of $\reals^n$.

Now, let $G$ be an $n$-dimensional crystallographic group and let $\theta:G\rightarrow\reals^n\rtimes O(n,\reals)$ be the given faithful, proper, cocompact action on $\Euclidean^n$.  Let $\bar{\theta}:P_{_G}\rightarrow O(n,\reals)$ be the induced faithful action of $P_{_G}$, so that for each $r\in\Euclidean^n$ and $g\in G$, there exists a vector $\tau_g$ with
\[\theta(g)(r)=\bar{\theta}(\psi(g))(r)+\tau_g.\]
We denote by $t_g\in T_{_G}$ the translation along $\tau_g$.
Let $t_1,\ldots,t_n$ be a set of generators of $T_{_G}$, and for $1\leq i\leq n$, let $\dot t_i$ denote the translation vector corresponding to $t_i$.  Let $\mathcal L=\integers[\dot t_1,\ldots,\dot t_n]$ be the lattice of translations, to that $\tau_g\in\mathcal L$ and, for all $\ell\in \mathcal L$ and all $g\in G$, we have $\bar{\theta}(\psi(g))(\ell)\in\mathcal L$, i.e. $P_{_G}$ preserves the lattice.

\begin{defn}
The $n$-dimensional crystallographic group $G$ is \emph{hyperoctahedral} if there are monomorphisms $\iota:P_{_G}\rightarrow O(n,\integers)$ and $\rho:O(n,\integers)\rightarrow O(n,\reals)$ such that $\rho\circ\iota=\bar{\theta}$ and $\rho$ corresponds to conjugation by some $A\in GL(n,\reals)$, i.e. for all $p\in P_{_G}$, we have $\bar{\theta}(p)=A\iota(p)A^{-1}$.
\end{defn}

\begin{lem}\label{lem:hyperoctahedralbasis}
Let $G$ be an $n$-dimensional hyperoctahedral crystallographic group.  Then $\Euclidean^n$ has a basis $\{\dot t\}_{i=1}^n$ such that $\bar{\theta}(P_{_G})$ acts by permutations on $\{\pm\dot t_i\}_{i=1}^n$.
\end{lem}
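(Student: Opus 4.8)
The plan is to read the desired basis directly off the conjugating matrix $A$ furnished by the hyperoctahedral hypothesis. By definition, since $G$ is hyperoctahedral there are monomorphisms $\iota\colon P_G\rightarrow O(n,\integers)$ and $\rho\colon O(n,\integers)\rightarrow O(n,\reals)$ with $\rho\circ\iota=\bar\theta$, where $\rho$ is conjugation by some $A\in GL(n,\reals)$; thus $\bar\theta(p)=A\iota(p)A^{-1}$ for every $p\in P_G$. The point is that each $\iota(p)$ is a signed permutation matrix, so it permutes the standard basis up to sign, and conjugation by $A$ transports this to a signed-permutation action on the columns of $A$.

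Concretely, I would set $\dot t_i=A\dot e_i$, where $\{\dot e_i\}_{i=1}^n$ is the standard basis of $\reals^n$. Because $A\in GL(n,\reals)$ is invertible, the vectors $\{\dot t_i\}_{i=1}^n$ form a basis of $\Euclidean^n$. To verify the permutation property, fix $p\in P_G$. Since $\iota(p)\in O(n,\integers)$, there is a permutation $\sigma\in S_n$ and signs $\varepsilon_i\in\{\pm1\}$ with $\iota(p)\dot e_i=\varepsilon_i\dot e_{\sigma(i)}$, and then
\[
\bar\theta(p)(\dot t_i)=A\iota(p)A^{-1}(A\dot e_i)=A\iota(p)\dot e_i=A(\varepsilon_i\dot e_{\sigma(i)})=\varepsilon_i\dot t_{\sigma(i)}.
\]
Hence $\bar\theta(p)$ carries each $\dot t_i$ to $\pm\dot t_{\sigma(i)}$, and being linear and invertible it therefore restricts to a bijection of the $2n$-element set $\{\pm\dot t_i\}_{i=1}^n$ onto itself. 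As $p$ was arbitrary, $\bar\theta(P_G)$ acts by permutations on $\{\pm\dot t_i\}_{i=1}^n$.

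There is essentially no obstacle to overcome: the entire content is packaged into the definition of hyperoctahedral, which asserts precisely that $\bar\theta(P_G)$ is conjugate, via $A$, into the signed-permutation group $O(n,\integers)$. The only step requiring any care is checking that conjugation by $A$ converts the tautological signed-permutation action on $\{\pm\dot e_i\}$ into the same kind of action on $\{\pm\dot t_i\}$, which is exactly the one-line computation displayed above.
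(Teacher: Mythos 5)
Your proposal is correct and matches the paper's own proof essentially verbatim: both take the conjugating matrix $A$ from the definition of hyperoctahedral, set $\dot t_i=A\dot e_i$, and perform the same one-line conjugation computation $\bar{\theta}(p)(\dot t_i)=A\iota(p)(\dot e_i)=\pm\dot t_j$. Your write-up is if anything slightly more explicit, recording the permutation $\sigma$ and signs $\varepsilon_i$ and noting invertibility of $A$ to justify that the $\dot t_i$ form a basis.
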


\begin{proof}
We have hypothesized a faithful action $\iota:P_{_G}\rightarrow O(n,\integers)$ by permutations on $\{\pm\dot e_i\}_{i=1}^n$. Moreover, there exists $A\in GL(n,\reals)$ such that for all $p\in P_{_G}$, we have $A\iota(p)A^{-1}=\bar{\theta}(p)$.  Let $\dot t_i=A\dot e_i$.  Then $\bar{\theta}(p)(\dot t_i)=A\iota(p)(\dot e_i)=\pm\dot t_j$ for some $j\leq n$, so that $\bar{\theta}(P_{_G})$ acts on $\{\pm\dot t_i\}_{i=1}^n$ by permutations.
\end{proof}

\subsection{Cube complexes}\label{sec:cubecomplexes}
A CAT(0) cube complex $\mathbf X$ is a simply connected CW-complex built from unit cubes of various dimensions, in such a way that distinct cubes intersect in a common face or in the empty set, subject to the additional constraint that the link of each 0-cube of $\mathbf X$ is a flag complex.  A \emph{hyperplane} $H$ of $\mathbf X$ is a connected subspace that intersects each cube $c\cong[-\frac{1}{2},\frac{1}{2}]^d$ either in the empty set or in a subspace obtained by restricting exactly one coordinate of $c$ to 0.  The \emph{carrier} $N(H)$ of $H$ is the union of all closed cubes $c$ with $H\cap c\neq\emptyset$ and is a CAT(0) cube complex isomorphic to $H\times[-\frac{1}{2},\frac{1}{2}]$; likewise, $H$ is a CAT(0) cube complex of dimension strictly lower than that of $\mathbf X$, if $\mathbf X$ is finite-dimensional.

The hyperplane $H$ is also globally separating: $\mathbf X-H$ has exactly two components, $\mathfrak h(H)$ and $\mathfrak h^*(H)$, called \emph{halfspaces}.  The distinct hyperplanes $H,H'$ \emph{cross} if each of the four \emph{quarterspaces} $\mathfrak h(H)\cap\mathfrak h(H'),\mathfrak h(H)\cap\mathfrak h^*(H'),\mathfrak h^*(H)\cap\mathfrak h(H'),\mathfrak h^*(H)\cap\mathfrak h^*(H')$ is nonempty.  Note that $H$ and $H'$ cross if and only if $H\cap H'\neq\emptyset$ and that if $H$ and $H'$ cross, then $H\cap H'$ is a hyperplane of $H$ and of $H'$.  The above facts were proved independently in~\cite{Sageev95,Chepoi2000}.

The subspaces $A,B\subset\mathbf X$ are \emph{separated} by the hyperplane $H$ if there is a halfspace $\mathfrak h\in\{\mathfrak h(H),\mathfrak h^*(H)\}$ such that $A\subset\mathfrak h$ and $B\subset\mathbf X-\mathfrak h$.  The 1-cube $c$ is \emph{dual} to $H$ if the 0-cubes of $c$ are separated by $H$ or, equivalently, if $H\cap c$ is the midpoint of $c$.  More generally, if $x,y\in\mathbf X^0$, then the number of hyperplanes separating $x$ from $y$ coincides with the distance from $x$ to $y$ in the graph $\mathbf X^1$. In~\cite{HaglundSemisimple}, Haglund showed that the path-metric on $\mathbf X^1$ extends to a metric $d_{\mathbf X}$ on $\mathbf X$, whose restriction to each cube is the $\ell^1$ metric.  We shall always use $d_{\mathbf X}$ instead of the CAT(0) metric discussed in~\cite{BridsonThesis,Gromov87,LearyInfiniteCubes,MoussongThesis}, and in fact shall almost always consider paths in the 1-skeleton, occasionally using the fact that $\mathbf X^1$ is a median graph (see~\cite{Chepoi2000,
EppsteinFalmagneOvchinnikov,ImKl,Roller98,vandeVel_book}).

A subcomplex $Y\subseteq\mathbf X$ is isometrically embedded if and only if $Y\cap H$ is connected for each hyperplane $H$ (this is well-known; see e.g.~\cite[Section~2]{HagenQuasiArb} for the usual proof using disc diagrams).  A \emph{combinatorial interval} $I$ is the tiling by unit-length 1-cubes of a subinterval of $\reals$ whose endpoints, if any, are integers, and a \emph{combinatorial path} in $\mathbf X$ is a map $\gamma:I\rightarrow\mathbf X^1$ that sends 0-cubes to 0-cubes and 1-cubes homeomorphically to 1-cubes.  The map $\gamma$ is therefore an isometric embedding if the map that assigns to each 1-cube of the image of $\gamma$ its dual hyperplane is injective.  In such a case, $\gamma$ is a \emph{combinatorial geodesic segment} if $I$ is finite, a \emph{combinatorial geodesic ray} if $I\cong[0,\infty)$, and a \emph{(bi-infinite) combinatorial geodesic} if $I\cong\reals$.  In each of these cases, we also use the notation $\gamma$ to mean the image of the map $\gamma:I\rightarrow\mathbf X$.  If $I$ is unbounded, we write, e.g., $\gamma:\reals\rightarrow\mathbf X$ with the understanding that $\gamma$ takes integers to 0-cubes and intervals $[k,k+1],\,k\in\integers$ isometrically to 1-cubes.  We say that $\gamma$ \emph{crosses} the hyperplane $H$ or that $H$ \emph{crosses} $\gamma$ to mean that the geodesic path $\gamma$ contains a 1-cube dual to $H$.  More generally, the hyperplane $H$ \emph{crosses} the isometrically embedded subcomplex $Y\subseteq\mathbf X$ if $H\cap Y\neq\emptyset$.  In this case, $Y-Y\cap H$ has exactly two components, namely the intersections of $Y$ with the two halfspaces in $\mathbf X$ associated to $H$.

The subcomplex $Y\subseteq\mathbf X$ is \emph{convex} if, for any concatenation $ef$ of 1-cubes of $Y$ that lie on the boundary path of a (closed) 2-cube $s$ of $\mathbf X$, the 2-cube $s$ belongs to $Y$ and, more generally, if $c$ is a cube of $\mathbf X$ with a corner in $Y$, then $c\subseteq Y$; convex subcomplexes are therefore CAT(0).  If $Y$ is convex, then $Y^1$ is a convex
subgraph of $\mathbf X^1$ (or, equivalently, if $Y$ is convex with respect to
the metric $d_{\mathbf X}$), i.e. every combinatorial geodesic segment with
two endpoints in $Y^1$ is contained in $Y^1$. However, there is no ambiguity in
simply using the term ``convex'' to refer to a subcomplex, since $Y$ is convex in the above sense if and only if it is convex with respect to the CAT(0) metric~\cite{HaglundSemisimple}. We shall use the fact, proved
in~\cite{Chepoi2000} and~\cite{Sageev95}, that the carrier of any hyperplane is
a convex subcomplex.  For a more detailed account of the basic properties of
cube complexes, we refer the reader to, for example,
~\cite{BandeltChepoi_survey,Chepoi2000,HaglundSemisimple,Sageev95,
WiseIsraelHierarchy}.

\subsection{Cubical isometries}\label{sec:cubicalisometries}
Isometries of $\mathbf X$ were classified in~\cite{HaglundSemisimple}.  Let $G$
act on $\mathbf X$ and let $g\in G$.  Then either $g$ stabilizes a cube of
$\mathbf X$, in which case we say that $g$ is \emph{elliptic}, or there is a
$g$-invariant combinatorial geodesic $\alpha:\reals\rightarrow\mathbf X^1$,
called a \emph{(combinatorial) axis} for $g$, on which $g$ acts as a
translation, in which case $g$ is \emph{hyperbolic}, or there exists a
hyperplane $H$ such that $g^kH=H$ and $g^k\mathfrak h(H)=\mathfrak h^*(H)$ for
some $k>0$. If $\mathbf X$ is finite-dimensional, the last
circumstance implies that there exists $n>0$ such that $g^n$ is either elliptic
or hyperbolic.  Also, it is well-known (and readily verified from the definition of a hyperplane) that if $H$ is a hyperplane, then $gH$ is again a hyperplane.

The hyperplane $H$ is \emph{$G$-essential} if, for
each $x\in\mathbf X^0$ and each $n\geq 0$, there exist $g,g^*\in G$ such that
$gx\in\mathfrak h(H),g^*x\in\mathfrak h^*(H),$ and $\min\left\{d_{\mathbf
X}(gx,N(H)),d_{\mathbf X}(g^*x,N(H))\right\}\geq n.$
$G$ acts \emph{essentially} on $\mathbf X$ if each hyperplane is $G$-essential.  If the infinite group $G$ acts properly and cocompactly on $\mathbf X$, then there is a convex, $G$-invariant subcomplex $\mathbf Y\subseteq\mathbf X$ on which $G$ acts essentially and cocompactly; this is the \emph{essential core theorem} of~\cite{CapraceSageev}, and each hyperplane of $\mathbf X$ crossing the \emph{essential core} $\mathbf Y$ is $G$-essential.

\subsection{The cube complex dual to a wallspace}\label{sec:wallspace}
The set-theoretic notion of a \emph{wallspace} is due to Haglund-Paulin~\cite{HaglundPaulin98}.  There are various accounts of the duality between CAT(0) cube complexes and wallspaces; we refer the reader to~\cite{ChatterjiNiblo04,NicaCubulating04}.  The procedure of passing from a group action on a wallspace to an action on the dual cube complex generalizes Sageev's construction in~\cite{Sageev95}.  At present, however, we use a slightly restricted version of the language of \emph{geometric wallspaces} from~\cite{HruskaWiseAxioms}.

Let $(M,d)$ be a metric space.  A \emph{geometric wall} $W\subset M$ is a subspace such that $M-W$ has exactly two nonempty connected components $\mathfrak h(W),\mathfrak h^*(W)$, called \emph{halfspaces}, and the wall $W$ \emph{separates} $p,q\in M$ if $p$ and $q$ lie in distinct halfspaces associated to $W$.  A \emph{geometric wallspace} $\left(M,\mathcal W\right)$ consists of a metric space $M$, together with a collection $\mathcal W$ of walls such that $\#(p,q)<\infty$ for all $p,q\in M$, where $\#(p,q)$ is the number of walls in $\mathcal W$ separating $M$.  An \emph{orientation} is an assignment $\mathcal W\ni W\mapsto x(W)\in\{\mathfrak h(W),\mathfrak h^*(W)\}$
of a halfspace to each wall.  The orientation $x$ is \emph{consistent} if $x(W)\cap x(W')\neq\emptyset$ for all $W,W'\in\mathcal W$ and \emph{canonical} if for all $p\in M$ and all but finitely many $W\in\mathcal W$, we have $p\in x(W)$.  By associating a 0-cube to each consistent, canonical orientation, with 0-cubes $x$ and $y$ adjacent if and only if the corresponding orientations differ on a single hyperplane, we obtain a median graph, which is the 1-skeleton of a uniquely determined CAT(0) cube complex called the \emph{cube complex dual to} the wallspace.

Suppose the group $G$ acts by isometries on $M$, and $\mathcal W$ is $G$-invariant in the sense that $gW\in\mathcal W$ for each geometric wall $W$ and each $g\in G$.  Then $G$ acts on the dual cube complex $\mathbf X$.  The collection $\mathcal W$ of walls satisfies the \emph{linear separation property} if there exist constants $K_1,K_2$ such that for all $p,q\in M$, $d(p,q)\leq K_1\#(p,q)+K_2,$ and it is shown in~\cite{HruskaWiseAxioms} that, if $G$ acts metrically properly on $M$ and $\mathcal W$ satisfies the linear separation property, then $G$ acts properly on $\mathbf X$.  In our situation, $\mathbf X$ is always locally finite, so that $G$ acts metrically properly on $\mathbf X$ if and only if the stabilizer of each cube is finite.

\begin{rem}\label{rem:cubesasorientations}
Each hyperplane $H$ in the CAT(0) cube complex $\mathbf X$ is a geometric wall whose complementary components are the halfspaces $\mathfrak h(H),\mathfrak h^*(H)$.  It is not hard to see that the cube complex dual to the wallspace whose underlying set is $\mathbf X$ and whose walls correspond in this manner to the hyperplanes is none other than $\mathbf X$.  Sometimes, it is useful to view the 0-cubes of $\mathbf X$ as consistent, canonical orientations of the hyperplanes in $\mathbf X$, in order to construct isometrically embedded subcomplexes, as in the proof of Lemma~\ref{lem:raybuild2} and that of Lemma~\ref{lem:induction1}.
\end{rem}

\subsection{The simplicial boundary of a cube complex}\label{sec:boundary}
The simplicial boundary $\simp\mathbf X$ of the locally-finite CAT(0) cube complex $\mathbf X$ containing no infinite family of pairwise-crossing hyperplanes was introduced in~\cite{HagenBoundary}.  Since the cube complexes considered here admit proper, cocompact group actions, we can use a more concrete definition of $\simp\mathbf X$ than that in~\cite{HagenBoundary} and give an almost completely self-contained account, suited to our purposes.  More precisely, $\simp\mathbf X$ is defined in~\cite{HagenBoundary} to be a complex constructed from simplices corresponding to infinite, inseparable, unidirectional sets of hyperplanes that contain no facing triple.  Such sets of hyperplanes are modeled on the set of hyperplanes crossing a combinatorial geodesic ray, but in some (non-cocompact) situations, there are such sets for which there is no corresponding ray.  In the present paper, we define simplices at infinity in terms of rays only.

Let $\mathcal W$ be the set of hyperplanes of $\mathbf X$, and for each isometrically embedded subcomplex $A\subseteq\mathbf X$, denote by $\mathcal W(A)$ the set of hyperplanes that cross $A$, i.e. those hyperplanes $H$ such that $\mathfrak h(H)\cap A\neq\emptyset$ and $\mathfrak h^*(H)\cap A\neq\emptyset$.  Let $\gamma,\gamma':[0,\infty)\rightarrow\mathbf X$ be combinatorial geodesic rays.  If $\mathcal W(\gamma)-\mathcal W(\gamma)\cap\mathcal W(\gamma')$ is finite, i.e. if all but finitely many hyperplanes that cross $\gamma$ also cross $\gamma'$, then $\gamma'$ \emph{consumes} $\gamma$.  If $\gamma'$ consumes $\gamma$ and $\gamma$ consumes $\gamma'$, then $\gamma$ and $\gamma'$ are \emph{almost-equivalent}.  Almost-equivalence is an equivalence relation on the set $\mathfrak R\mathbf X$ of combinatorial geodesic rays in $\mathbf X$; the class represented by $\gamma$ is denoted $[\gamma]$.

\begin{exmp}[Consumption]\label{exmp:consumes}
Consider $\mathbf R_2\cong\mathbf R_1\times\mathbf R_1$.  The hyperplanes are all isomorphic to $\mathbf R_1$ and have the form $V_n=\{n+\frac{1}{2}\}\times\mathbf R_1$ or $H_n=\mathbf R_1\times\{n+\frac{1}{2}\}$.  Let $\gamma$ be the combinatorial geodesic ray whose 0-skeleton is $\{(n,n)\,:\, n\geq 0\}$ and let $\gamma'$ be the combinatorial geodesic ray whose 0-skeleton is $\{(n,0)\,:\, n\geq -4\}$.  Then $\mathcal W(\gamma')=\{V_n\,:\, n\geq-4\}$ and $\mathcal W(\gamma)=\{V_n,H_n\,:\, n\geq 0\}$.  Hence $\gamma$ consumes $\gamma'$.
\end{exmp}

The following lemma from~\cite{HagenBoundary} is used freely throughout this paper.  A set $\mathcal W'$ of hyperplanes is \emph{inseparable} if for all $W,W'\in\mathcal W'$, if a hyperplane $U$ separates $W$ and $W'$, then $U\in\mathcal W'$.

\begin{lem}\label{lem:folding}
For any $x\in\mathbf X$, and for any $[\gamma]\in\mathfrak R\mathbf X$, there exists a combinatorial geodesic ray $\gamma':[0,\infty)\rightarrow\mathbf X$ such that $\gamma'(0)=x$ and $[\gamma]=[\gamma']$.

If $\mathcal W'\subseteq\mathcal W(\gamma)$ is an inseparable subset such that $\mathcal W(\gamma)-\mathcal W'$ is finite, then there exists a combinatorial geodesic ray $\gamma'$ such that $[\gamma']=[\gamma]$ and $\mathcal W(\gamma')=\mathcal W'$.
\end{lem}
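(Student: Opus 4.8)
The statement has two parts, which I would treat separately, using throughout the standard fact that a combinatorial path is a geodesic if and only if it crosses each hyperplane at most once; consequently the hyperplanes separating two $0$-cubes $a,b$ are exactly those crossed by any geodesic $[a,b]$.

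For the first assertion (rebasing at $x$) the plan is to splice a geodesic from $x$ onto a tail of $\gamma$, avoiding any limiting argument. Let $\mathcal S$ be the finite set of hyperplanes separating $x$ from $\gamma(0)$, so $\#\mathcal S=d_{\mathbf X}(x,\gamma(0))$. Writing $\mathcal W(\gamma)=\{H_1,H_2,\dots\}$ in the order in which $\gamma$ crosses them, the hyperplanes separating $x$ from $\gamma(N)$ are precisely $\mathcal S\,\triangle\,\{H_1,\dots,H_N\}$, since ``separates'' is additive modulo $2$ along the reference point $\gamma(0)$. Because $\mathcal S$ is finite, for all large $N$ we have $\mathcal S\cap\{H_{N+1},H_{N+2},\dots\}=\varnothing$, so the hyperplane set crossed by $[x,\gamma(N)]$ is disjoint from the set $\{H_{N+1},H_{N+2},\dots\}$ crossed by $\gamma|_{[N,\infty)}$. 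Hence the concatenation $\gamma'=[x,\gamma(N)]\ast\gamma|_{[N,\infty)}$ crosses every hyperplane at most once and is therefore a combinatorial geodesic ray based at $x$; and $\mathcal W(\gamma')\,\triangle\,\mathcal W(\gamma)=\mathcal S$ is finite, so $[\gamma']=[\gamma]$.

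For the second assertion I would realize $\mathcal W'$ as the crossing set of a ray. First note that $\mathcal W'$ inherits the structural properties of $\mathcal W(\gamma)$: it is infinite (being cofinite in the infinite set $\mathcal W(\gamma)$), it contains no facing triple (a subset of a facing-triple-free set has none), and it is unidirectional (for each $H\in\mathcal W'$ the elements of $\mathcal W'$ strictly behind $H$ form a subset of those of $\mathcal W(\gamma)$, hence are finite in number); and it is inseparable by hypothesis. The goal is then exactly the realization statement underlying the ray-description of $\simp\mathbf X$: in our cocompact, locally finite $\mathbf X$, an infinite, inseparable, unidirectional, facing-triple-free set of hyperplanes equals $\mathcal W(\gamma')$ for some ray $\gamma'$. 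I would build $\gamma'$ as a nested limit of geodesic segments. Taking $x_0=\gamma(0)$, which lies on the back side of every hyperplane of $\mathcal W'\subseteq\mathcal W(\gamma)$, use inseparability together with the absence of facing triples to enumerate $\mathcal W'=\{U_1,U_2,\dots\}$ so that each initial segment $\{U_1,\dots,U_k\}$ is itself inseparable and ``innermost'', i.e. no $U_j$ with $j>k$ is backward of all of it; local finiteness lets one peel off such an innermost hyperplane at each stage. For each $k$ there is then a $0$-cube $x_k$ with $[x_0,x_k]$ crossing exactly $\{U_1,\dots,U_k\}$ (the geodesic is free to route around the finitely many hyperplanes outside $\mathcal W'$), the segments $[x_0,x_k]$ nest, and cocompactness together with local finiteness yields a limiting ray $\gamma'$ with $\mathcal W(\gamma')=\mathcal W'$. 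Finally $\mathcal W(\gamma')\,\triangle\,\mathcal W(\gamma)=\mathcal W(\gamma)\setminus\mathcal W'$ is finite, so $[\gamma']=[\gamma]$.

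The main obstacle is this realization step. It is genuinely false without cocompactness -- there exist inseparable, unidirectional, facing-triple-free sets realized by no ray -- so the argument must use finiteness of $\mathbf X$ essentially, and the tempting finite surgeries do not work in isolation. Keeping the basepoint $\gamma(0)$ and merely reversing $\gamma$'s orientation on the finite set $F=\mathcal W(\gamma)\setminus\mathcal W'$, and alternatively pushing the basepoint across $F$, each produces an \emph{inconsistent} orientation for suitable $\mathcal W'$: already in $\mathbf R_2$, with $F$ a single hyperplane, one recipe forces an empty intersection of halfspaces while the other succeeds, and relocating the removed hyperplane interchanges which recipe works. The robust ingredient is therefore the ordering lemma -- that inseparability plus the no-facing-triple condition permit an enumeration of $\mathcal W'$ with inseparable, geodesically realizable initial segments -- from which the limiting ray is assembled. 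This is the part I expect to require the most care, and it is where I would lean on the machinery of~\cite{HagenBoundary} in the cocompact setting.
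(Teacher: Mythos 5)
You should first note that this paper contains no proof of Lemma~\ref{lem:folding} to compare against: it is imported verbatim from~\cite{HagenBoundary} (``The following lemma from~\cite{HagenBoundary} is used freely throughout this paper''), so your attempt must be judged against the argument in that reference. Your proof of the first assertion is correct and is essentially the standard one: the parity identity $\mathcal W([x,\gamma(N)])=\mathcal S\,\triangle\,\{H_1,\ldots,H_N\}$, the choice of $N$ with $\mathcal S\cap\{H_{N+1},H_{N+2},\ldots\}=\emptyset$, and the criterion that a combinatorial path is geodesic if and only if it crosses each hyperplane at most once together give a geodesic splice with $\mathcal W(\gamma')\,\triangle\,\mathcal W(\gamma)=\mathcal S$.

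Your proof of the second assertion has a genuine gap, and in fact an internal contradiction. The constructive step anchors at $x_0=\gamma(0)$ and asserts that for each $k$ there is a $0$-cube $x_k$ with $[x_0,x_k]$ crossing exactly $\{U_1,\ldots,U_k\}$ because ``the geodesic is free to route around the finitely many hyperplanes outside $\mathcal W'$.'' It is not: every path between two $0$-cubes crosses every hyperplane separating them, and an element of $F=\mathcal W(\gamma)-\mathcal W'$ can separate $\gamma(0)$ from all but finitely many elements of $\mathcal W'$. Your own $\mathbf R_2$ illustration is exactly of this type: if $\gamma$ is a staircase ray and $F$ consists of the first horizontal hyperplane $H$ crossed, then $\mathcal W(\gamma)-\{H\}$ is inseparable, but any segment from $\gamma(0)$ reaching past the second horizontal must cross $H$, so no $x_k$ as required exists and the basepoint must be moved across $H$ --- which is precisely the obstacle you correctly identify in your final paragraph and then fail to build into the construction. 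Moreover, your reduction to the general realization theorem for infinite, inseparable, unidirectional, facing-triple-free sets simultaneously discards the one hypothesis that makes this lemma true and imports one the lemma does not have: cofiniteness of $\mathcal W'$ in the hyperplane set of an \emph{actual ray} is what powers the proof, while cocompactness is not assumed (the lemma is stated for locally finite $\mathbf X$, and the invisible sets you cite, such as the horizontal hyperplanes of the eighth-flat, have infinite complement in every ray's hyperplane set, so they are not in tension with the statement). The correct argument is a finite surgery exploiting inseparability directly: for each $U\in F$, since $U\notin\mathcal W'$ cannot separate two elements of $\mathcal W'$, either every element of $\mathcal W'$ crossed by $\gamma$ before $U$ crosses $U$, or every element crossed after $U$ does; one then moves the basepoint across the hyperplanes of the first kind by a consistent, canonical reorientation (as in Remark~\ref{rem:cubesasorientations}), keeps the ray on the near side of those of the second kind, and assembles the limit ray by K\"{o}nig's lemma as in Lemma~\ref{lem:raybuild}. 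As written, your argument for the second assertion does not go through.
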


Given $[\gamma],[\gamma']\in\mathfrak R\mathbf X$, write $[\gamma]\leq[\gamma']$
if some (and hence every) representative of $\gamma'$ consumes some (and hence
every) representative of $\gamma$.  From the definition, it follows that $\leq$
partially orders $\mathfrak R\mathbf X$.  The almost-equivalence class
$[\gamma]$ is \emph{minimal} if, for each geodesic ray $\gamma'$ consumed by
$\gamma$, we have $[\gamma']=[\gamma]$.  Moreover, if $\beta,\gamma$ are
combinatorial geodesic rays, then either $\mathcal W(\gamma)\cap\mathcal
W(\beta)$ is finite, or $\mathcal W(\gamma)\cap\mathcal W(\gamma)=\mathcal
W(\sigma)$ for some combinatorial geodesic ray $\sigma$, by
Lemma~\ref{lem:raybuild} below.

As discussed in~\cite{Chepoi2000,Roller98}, the 1-skeleton of $\mathbf X$ is a
\emph{median graph}, which means that for any three distinct 0-cubes $x,y,z$,
there exists a unique 0-cube $m=m(x,y,z)$ such that the combinatorial distance
between any two of $x,y,z$ is realized by a geodesic segment in $\mathbf
X^{(1)}$ that passes through $m$.  In terms of hyperplanes, this means that the
set of hyperplanes $H$ separating $x$ from $m$ is exactly the set of $H$ such
that $H$ separates $x$ from $y$ and $H$ separates $x$ from $z$.

\begin{lem}\label{lem:raybuild}
Let $\mathbf X$ be a CAT(0) cube complex and let $\beta,\gamma:[0,\infty)\rightarrow\mathbf X$ be combinatorial geodesic rays, and suppose that $\mathcal W(\gamma)\cap\mathcal W(\beta)$ is infinite.  Then there exists a combinatorial geodesic ray $\sigma$ such that $[\sigma]\leq[\beta]$ and $[\sigma]\leq[\gamma]$.
\end{lem}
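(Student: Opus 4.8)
The plan is to realize the infinite hyperplane set $\mathcal W(\gamma)\cap\mathcal W(\beta)$, up to a finite error, as the crossing set of a single geodesic ray, using medians to ``intersect'' $\beta$ and $\gamma$ and then König's lemma to pass from a sequence of $0$-cubes to an honest ray. First I would invoke Lemma~\ref{lem:folding} to replace $\gamma$ by an almost-equivalent ray with $\gamma(0)=x_0:=\beta(0)$; since almost-equivalent rays have crossing sets differing by a finite set, the hypothesis that $\mathcal W(\gamma)\cap\mathcal W(\beta)$ is infinite survives, and it suffices to produce $\sigma$ with $[\sigma]\leq[\beta]$ and $[\sigma]\leq[\gamma]$ for this new $\gamma$. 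Write $\mathcal W(x,y)$ for the set of hyperplanes separating the $0$-cubes $x,y$, so that $d_{\mathbf X}(x,y)$ equals the number of hyperplanes in $\mathcal W(x,y)$, and set $\mathcal S:=\mathcal W(\beta)\cap\mathcal W(\gamma)$.

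For each $n$, let $z_n:=m(x_0,\beta(n),\gamma(n))$ be the median. By the characterization of the median recalled above, $\mathcal W(x_0,z_n)=\mathcal W(x_0,\beta(n))\cap\mathcal W(x_0,\gamma(n))$. Because $\beta$ and $\gamma$ are geodesic rays based at $x_0$, each hyperplane crossing $\beta$ is crossed exactly once and separates $x_0$ from the tail, so $\mathcal W(x_0,\beta(n))$ increases with $n$ to $\mathcal W(\beta)$, and likewise $\mathcal W(x_0,\gamma(n))\uparrow\mathcal W(\gamma)$. Hence $\mathcal W(x_0,z_n)$ is a nondecreasing sequence of finite sets, and for increasing sequences the union of the intersections is the intersection of the unions, so $\bigcup_n\mathcal W(x_0,z_n)=\mathcal S$. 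In particular $\mathcal W(x_0,z_n)\subseteq\mathcal S$ for every $n$, and $d_{\mathbf X}(x_0,z_n)\to\infty$ since $\mathcal S$ is infinite and the $\mathcal W(x_0,z_n)$ exhaust it.

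Next I would extract the ray. For each $n$ choose a combinatorial geodesic $p_n$ from $x_0$ to $z_n$; every hyperplane dual to an edge of $p_n$ lies in $\mathcal W(x_0,z_n)\subseteq\mathcal S$. Form the rooted tree $\mathcal T$ whose vertices are the geodesic segments issuing from $x_0$ that occur as an initial subpath of some $p_n$, with an edge joining each such segment to its one-edge extensions. Local finiteness of $\mathbf X$ makes $\mathcal T$ locally finite, and $d_{\mathbf X}(x_0,z_n)\to\infty$ makes $\mathcal T$ infinite, so by König's lemma $\mathcal T$ has an infinite branch. This branch is a combinatorial geodesic ray $\sigma$ based at $x_0$: each of its finite initial subpaths is an initial subpath of some $p_n$, hence a genuine geodesic segment, so the map sending edges of $\sigma$ to dual hyperplanes is injective and $\sigma$ is a geodesic ray.

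Finally, every edge of $\sigma$ is an edge of some $p_n$, so its dual hyperplane lies in $\mathcal S$; thus $\mathcal W(\sigma)\subseteq\mathcal S=\mathcal W(\beta)\cap\mathcal W(\gamma)$. In particular $\mathcal W(\sigma)\subseteq\mathcal W(\beta)$ and $\mathcal W(\sigma)\subseteq\mathcal W(\gamma)$, so $\beta$ and $\gamma$ each consume $\sigma$, giving $[\sigma]\leq[\beta]$ and $[\sigma]\leq[\gamma]$, as required. The one step needing genuine care is the extraction of $\sigma$: the medians supply only a sequence of $0$-cubes at growing distance whose crossing sets sit inside $\mathcal S$, and it is the König's-lemma/compactness argument, powered by local finiteness, that upgrades this to an actual geodesic ray with crossing set contained in $\mathcal S$. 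I expect this to be the crux, while the median identity and the monotonicity bookkeeping are routine.
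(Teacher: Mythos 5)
Your proof is correct, and its skeleton matches the paper's: both base the two rays at a common point via Lemma~\ref{lem:folding}, take the medians $m(\beta(0),\beta(t),\gamma(t))$, use the median characterization to see that geodesics from the basepoint to these medians cross only hyperplanes in $\mathcal W(\beta)\cap\mathcal W(\gamma)$ while these crossing sets exhaust that intersection, and conclude with K\"{o}nig's lemma. The genuine difference is the extraction step. The paper does not take arbitrary geodesics $p_n$ to the medians and pass to a limit through the tree of initial subpaths; instead it inductively constructs a \emph{nested} sequence $\sigma_t=\sigma_{t-1}\alpha_t$, where $\alpha_t$ joins $m_{t-1}$ to $m_t$, and proves by a separation argument that each concatenation is itself geodesic. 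That extra verification buys the exact equality $\mathcal W(\sigma)=\mathcal W(\beta)\cap\mathcal W(\gamma)$, because the approximating segments terminate at the medians themselves; your limit ray satisfies only the containment $\mathcal W(\sigma)\subseteq\mathcal W(\beta)\cap\mathcal W(\gamma)$, since the K\"{o}nig branch in your subpath tree need never end at a median. The containment fully suffices for the lemma as stated, since $\mathcal W(\sigma)\subseteq\mathcal W(\beta)$ already means $\beta$ consumes $\sigma$, giving $[\sigma]\leq[\beta]$ and likewise $[\sigma]\leq[\gamma]$; and your version is slightly cleaner in that it avoids proving the concatenations geodesic. But be aware that the paragraph preceding the lemma in the paper invokes the stronger conclusion, namely that $\mathcal W(\beta)\cap\mathcal W(\gamma)=\mathcal W(\sigma)$ for some combinatorial geodesic ray $\sigma$, which your argument as written does not deliver; to recover it, one should follow the paper's nested choice of segments between consecutive medians rather than allowing arbitrary geodesics $p_n$.
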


\begin{proof}
By Lemma~\ref{lem:folding}, we may assume that $\gamma(0)=\beta(0)$.  For each $t\geq 0$, let $m_t$ be the median of the 0-cubes $\beta(0),\beta(t)$, and $\gamma(t)$.  Let $\sigma_t$ be a combinatorial geodesic segment joining $m_t$ to $\beta(0)$.  By the definition of the median, each hyperplane crossing $\sigma_t$ separates $\beta(t)$ and $\gamma(t)$ from $\beta(0)$.  Hence $\mathcal W(\sigma_t)\subset\mathcal W(\gamma)\cap\mathcal W(\beta)$.  On the other hand, if $W$ crosses both $\gamma$ and $\beta$, then $W$ separates $m_t$ from $\beta(0)$ for all sufficiently large $t$, so that
\[\mathcal W(\gamma)\cap\mathcal W(\beta)=\bigcup_{t\geq 0}\mathcal
W(\sigma_t).\]

Now, $\sigma_0=\beta(0)$, and for $t\geq 1$, choose
$\sigma_t=\sigma_{t-1}\alpha_t$, where $\alpha_t$ is a combinatorial geodesic
segment joining $m_{t-1}$ to $m_t$.  By induction, $\sigma_{t-1}$ is a geodesic
segment, and $\alpha_t$ is a geodesic segment by definition, so either
$\sigma_t$ is a geodesic segment joining $\sigma(0)=\beta(0)$ to $m_t$, or some
hyperplane $H$ is dual to a 1-cube of $\sigma_{t-1}$ and a 1-cube of
$\alpha_t$.  Since $H$ crosses $\sigma_{t-1}$, both $\beta(t-1)$ and
$\gamma(t-1)$ are separated from $\beta(0)$ by $H$, by the definition of the
median $m_{t-1}$. Thus both $\beta(t)$ and $\gamma(t)$ are separated from
$\beta(0)$ by $H$, whence $m_t$ is separated from $\beta(0)$ by $H$.  In other
words, $H$ separates $m_t$ from $\beta(0)$, and $H$ separates $m_{t-1}$ from
$\beta(0)$, and $H$ separates $m_t$ from $m_{t-1}$, since it crosses the
geodesic segment $\alpha_t$.  This is a contradiction, and each $\sigma_t$ is
therefore a geodesic segment.

Now, for all $t$, we have $\sigma_t\subseteq\sigma_{t+1}$.  K\"{o}nig's lemma
now supplies us with a combinatorial ray $\sigma=\cup_{t\geq 0}\sigma_t$ that
is geodesic (since it crosses each hyperplane in at most one 1-cube) and has
the property that $\cup_{t\geq 0}\mathcal W(\sigma_t)=\mathcal W(\sigma)$.  It
was shown above that $\cup_{t\geq 0}\mathcal W(\sigma_t)=\mathcal
W(\beta)\cap\mathcal W(\gamma)$.  By definition, $[\sigma]\leq[\gamma]$ and
$[\sigma]\leq[\beta]$.
\end{proof}

\begin{lem}\label{lem:raybuild2}
Let $\mathbf X$ be a CAT(0) cube complex and let $\beta,\gamma:[0,\infty)\rightarrow\mathbf X$ be combinatorial geodesic rays with $\beta(0)=\gamma(0)$.  Suppose that for all $U\in\mathcal W(\beta)$ and $V\in\mathcal W(\gamma)$, the hyperplanes $U$ and $V$ cross.  Then there exists a combinatorial geodesic ray $\sigma$ such that $[\beta],[\gamma]\leq[\sigma]$.
\end{lem}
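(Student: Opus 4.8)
The plan is to construct $\sigma$ directly, producing a ray whose crossing-set is exactly $\mathcal W(\beta)\cup\mathcal W(\gamma)$; since this set contains both $\mathcal W(\beta)$ and $\mathcal W(\gamma)$, the ray $\sigma$ will consume each of $\beta,\gamma$, giving $[\beta],[\gamma]\leq[\sigma]$. First I would record that $\mathcal W(\beta)\cap\mathcal W(\gamma)=\emptyset$: a hyperplane never crosses itself, so by the hypothesis no hyperplane can lie in both families. Write $p=\beta(0)=\gamma(0)$, enumerate $\mathcal W(\beta)=\{U_1,U_2,\ldots\}$ and $\mathcal W(\gamma)=\{V_1,V_2,\ldots\}$ in the order in which $\beta$ and $\gamma$ cross them, and for each such hyperplane call the halfspace \emph{not} containing $p$ its \emph{forward} halfspace. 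Because $\beta$ and $\gamma$ are geodesics, the forward halfspaces of $U_1,\ldots,U_n$ are precisely the halfspaces of these hyperplanes containing the 0-cube $\beta(n)$, and similarly for $V_1,\ldots,V_n$ and $\gamma(n)$.

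Following Remark~\ref{rem:cubesasorientations}, I would realize $\sigma$ as a nested union of geodesic segments $\sigma_n$ running from $p$ to a 0-cube $z_n$, where $z_n$ corresponds to the orientation $o_n$ that is forward on $U_1,\ldots,U_n$ and on $V_1,\ldots,V_n$ and agrees with the orientation of the 0-cube $p$ on every other hyperplane. The crux is to verify that $o_n$ is a consistent, canonical orientation, so that it determines an honest 0-cube $z_n$. Canonicity is immediate, since $o_n$ differs from the (canonical) orientation of $p$ on only the $2n$ hyperplanes $U_1,\ldots,U_n,V_1,\ldots,V_n$. For consistency I would argue pairwise: on any pair drawn from $\{U_1,\ldots,U_n\}$ together with the $p$-oriented hyperplanes, $o_n$ agrees with the orientation of the genuine 0-cube $\beta(n)$, so the two chosen halfspaces meet; symmetrically, a pair drawn from $\{V_1,\ldots,V_n\}$ together with the $p$-oriented hyperplanes is handled by $\gamma(n)$. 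The only remaining pairs consist of one $U_i$ and one $V_j$ with $i,j\leq n$, and here the two forward halfspaces intersect precisely because $U_i$ and $V_j$ cross, so all four quarterspaces are nonempty. This single case is where the hypothesis is used.

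Finally I would assemble the ray. Since the hyperplanes separating $p$ from $z_n$ are exactly $\{U_1,\ldots,U_n,V_1,\ldots,V_n\}$, we have $d_{\mathbf X}(p,z_n)=2n$, and this separating set is contained in the one separating $p$ from $z_{n+1}$, with $z_n$ and $z_{n+1}$ separated only by $\{U_{n+1},V_{n+1}\}$; hence $z_n$ lies on a combinatorial geodesic from $p$ to $z_{n+1}$. Concatenating geodesic segments $\sigma_n\subseteq\sigma_{n+1}$ yields a combinatorial path $\sigma$ that crosses each hyperplane of $\mathcal W(\beta)\cup\mathcal W(\gamma)$ exactly once; it is therefore a combinatorial geodesic ray with $\mathcal W(\sigma)=\mathcal W(\beta)\cup\mathcal W(\gamma)$. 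As $\mathcal W(\beta),\mathcal W(\gamma)\subseteq\mathcal W(\sigma)$, the ray $\sigma$ consumes both $\beta$ and $\gamma$, so $[\beta],[\gamma]\leq[\sigma]$, as required. The main obstacle is the consistency check of $o_n$, i.e.\ confirming that the prescribed orientation genuinely realizes a 0-cube; everything there reduces to the crossing hypothesis furnishing nonempty intersections of forward halfspaces across the two families.
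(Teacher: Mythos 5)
Your proposal is correct and takes essentially the same approach as the paper: your orientations $o_n$ are precisely the paper's 0-cubes $m_t$ (forward on the hyperplanes that $\beta$ and $\gamma$ have already crossed, toward the basepoint on everything else), with the crossing hypothesis invoked exactly where the paper invokes it, namely for the mixed pairs $U_i,V_j$. Your distance count $d_{\mathbf X}(p,z_n)=2n$ together with the nesting of separator sets is just a streamlined substitute for the paper's inductive argument that the concatenations $\sigma_t=\sigma_{t-1}\alpha_t$ remain geodesic, so the two proofs are the same in substance.
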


\begin{proof}
Let $x=\beta(0)$ and, for each $t\in\naturals$, let $y_t=\beta(t)$ and
$z_t=\gamma(t)$.  Define a 0-cube $m_t$ by orienting all hyperplanes, as
follows.  If $W\in\mathcal W(\beta)$, let $m_t(W)=y_t(W)$ be the halfspace
associated to $W$ that contains $y_t$.  If $W\in\mathcal W(\gamma)$, let
$m_t(W)=z_t(W)$.  Otherwise, if $W$ does not cross $\beta$ or $\gamma$, let
$m_t(W)=x(W)$.  Now, $m_t(W)\neq x(W)$ if and only if $W$ is one of the
finitely many hyperplanes separating $x$ from $y_t$ or $z_t$ and hence the
orientation $m_t$ defines a 0-cube, also denoted $m_t$, provided it orients the hyperplanes consistently, which we now verify.

Let $W,W'$ be hyperplanes.  If neither $W$ nor $W'$ crosses $\beta$ or $\gamma$,
then $m_t(W)\cap m_t(W')=x(W)\cap x(W')\neq\emptyset$, since $x$ is a
0-cube.  If $W$ crosses $\gamma$ and $W'$ crosses neither $\beta$ nor $\gamma$,
then $m_t(W')=x(W')$ and either $W$ and $W'$ cross or $W$ and $x$ lie in the
same halfspace associated to $W'$.  In either case, $m_t(W)\cap
m_t(W')\neq\emptyset$.  Finally, if $W$ crosses $W'$, then $W$ and $W'$ cross,
so that $m_t(W)\cap m_t(W')\neq\emptyset$.  Thus $m_t$ consistently orients all
hyperplanes.  Hence there is a 0-cube $m_t$ such that a hyperplane $W$ separates
$x$ from $m_t$ if and only if $W$ separates $x$ from $y_t$ or from $z_t$.

Let $\sigma_t$ be a combinatorial geodesic segment joining $x$ to $m_t$.  By
construction, $m_t$ is separated from $x$ by exactly the set of hyperplanes
$W$ that separate $x$ from $y_t$ or from $z_t$.  Thus
\[\bigcup_{t\geq 0}\mathcal W(\sigma_t)=\mathcal W(\beta)\cup\mathcal
W(\gamma),\]
as is illustrated heuristically in Figure~\ref{fig:raybuild2}.

As in the proof of Lemma~\ref{lem:raybuild}, we need a particular choice of
$\sigma_t$, made in the following inductive way.  First,
$\sigma_0=\beta(0)=\gamma(0)$.  Next, for $t\geq 1$, let $\alpha_t$ be a
geodesic segment joining $m_{t-1}$ to $m_t$ and let
$\sigma_t=\sigma_{t-1}\alpha_t$.  Suppose the hyperplane $H$ crosses $\alpha_t$
and $\sigma_{t-1}$.  Since it crosses $\sigma_{t-1}$, the hyperplane $H$
separates exactly one of $\beta(t-1)$ or $\gamma(t-1)$ from $\beta(0)$; suppose
the former.  Then $H$ separates $\beta(t)$ from $\beta(0)$, and thus separates
$m_t$ from $\beta(0)$.  Since $H$ crosses $\alpha_t$, the 0-cubes $m_t,m_{t-1}$
are separated by $H$, but both lie in the halfspace associated to $H$ that does
not contain $\beta(0)$, and this is a contradiction.  Hence $\sigma_t$ is a
geodesic joining $\beta(0)$ to $m_t$, and $\sigma_t\subseteq\sigma_{t+1}$ for
all $t$.  Arguing as in the proof of Lemma~\ref{lem:raybuild} now yields the
desired $\sigma$.
\end{proof}

\begin{rem}[Isometrically embedded quadrants]\label{rem:quadrant_embed}
Under the hypotheses of Lemma~\ref{lem:raybuild2}, the proof of Theorem~3.23 of~\cite{HagenBoundary} actually yields a copy of $\mathbf R_2$ in $\mathbf X$ that contains geodesic rays almost-equivalent to $\beta,\gamma,$ and has isometrically embedded 1-skeleton; this implies Lemma~\ref{lem:raybuild2}.  Since we do not require this stronger fact, we have opted for a self-contained and slightly simpler proof.
\end{rem}

\begin{figure}
\begin{overpic}[width=0.35\textwidth]{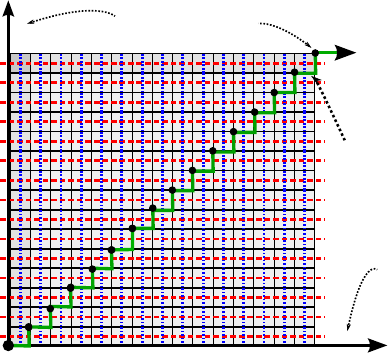}
\put(31,85){$\beta$}
\put(60,84){$m_t$}
\put(88,51){$\sigma$}
\put(97,21){$\gamma$}
\end{overpic}
\caption{The proof of Lemma~\ref{lem:raybuild2}.  The bold 0-cubes are the various $m_t$, while the hyperplanes are dashed line segments.}
\label{fig:raybuild2}
\end{figure}

To define the simplicial boundary requires the following lemmas.

\begin{lem}\label{lem:minimalexists}
Suppose that $\mathbf X$ is locally finite and contains no infinite set of pairwise-crossing hyperplanes, and let $\gamma\subset\mathbf X$ be a combinatorial geodesic ray.  Then there exists a combinatorial geodesic ray $\gamma_0$ such that $[\gamma_0]$ is minimal and $[\gamma_0]\leq[\gamma]$.
\end{lem}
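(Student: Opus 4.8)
The plan is to produce a minimal class below $[\gamma]$ by a well-foundedness argument, using as a complexity the integer $D(\gamma)$ defined as the largest cardinality of a set of pairwise-crossing hyperplanes contained in $\mathcal W(\gamma)$. Since $\mathbf X$ contains no infinite set of pairwise-crossing hyperplanes, $D(\gamma)<\infty$, and I would argue by induction on $D(\gamma)$: either $[\gamma]$ is already minimal, or I descend to a ray of strictly smaller complexity and invoke the inductive hypothesis.

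First I would make a reduction so that all comparisons become honest set inclusions. If $\gamma'$ is consumed by $\gamma$, then $\mathcal W(\gamma')\setminus\mathcal W(\gamma)$ is finite, so $\mathcal W(\gamma)\cap\mathcal W(\gamma')$ is infinite and cofinite in $\mathcal W(\gamma')$; applying Lemma~\ref{lem:raybuild} to $\gamma',\gamma$ yields a ray $\sigma$ with $\mathcal W(\sigma)=\mathcal W(\gamma)\cap\mathcal W(\gamma')$ and $[\sigma]\leq[\gamma'],[\gamma]$, and since $\mathcal W(\sigma)$ is cofinite in $\mathcal W(\gamma')$ we get $[\sigma]=[\gamma']$. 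Thus after replacing $\gamma'$ by $\sigma$ we may always assume $\mathcal W(\gamma')\subseteq\mathcal W(\gamma)$, with $[\gamma']<[\gamma]$ corresponding exactly to an inclusion whose complement is infinite. I would also record that $\mathcal W(\gamma)$ is \emph{inseparable}: any hyperplane separating two members of $\mathcal W(\gamma)$ must cross $\gamma$ between their dual $1$-cubes, hence lies in $\mathcal W(\gamma)$.

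For the base case $D(\gamma)=1$, the hyperplanes of $\mathcal W(\gamma)$ pairwise fail to cross, so, ordered by how $\gamma$ meets them, they form a linearly nested tower in which each separates its predecessor from its successor. Inseparability then forces every infinite inseparable subset of $\mathcal W(\gamma)$ to be a cofinite terminal interval of the tower. Consequently any ray $\gamma'$ consumed by $\gamma$ has $\mathcal W(\gamma')$ cofinite in $\mathcal W(\gamma)$, so $[\gamma']=[\gamma]$; hence $[\gamma]$ is already minimal and we may take $\gamma_0=\gamma$.

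The inductive step reduces to a single key claim, which I expect to be the main obstacle: \emph{a proper sub-$UBS$ cannot retain a maximal-size crossing family}, i.e. if $\mathcal W(\gamma')\subseteq\mathcal W(\gamma)$ still contains a pairwise-crossing family of size $D(\gamma)$, then $\mathcal W(\gamma)\setminus\mathcal W(\gamma')$ is finite and $[\gamma']=[\gamma]$. Granting this, any strict descent strictly lowers $D$, so if $[\gamma]$ is not minimal we may choose $\gamma'$ with $\mathcal W(\gamma')\subsetneq\mathcal W(\gamma)$ infinite complement and $D(\gamma')<D(\gamma)$, and the inductive hypothesis applied to $\gamma'$ yields a minimal $[\gamma_0]\leq[\gamma']\leq[\gamma]$; equivalently this establishes the descending chain condition on classes below $[\gamma]$. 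To prove the key claim I would fix a maximal crossing family $\{F_1,\dots,F_{D}\}\subseteq\mathcal W(\gamma')$ and analyze an arbitrary $W\in\mathcal W(\gamma)$: maximality forces $W$ to fail to cross some $F_i$, pinning $W$ into the nested tower through $F_i$, after which inseparability of $\mathcal W(\gamma')$ should absorb all but finitely many such $W$. Making this tower bookkeeping precise — in particular ruling out that infinitely many hyperplanes of $\mathcal W(\gamma)$ escape every $F_i$-tower — is exactly where the \emph{unidirectional} and \emph{no-facing-triple} hypotheses must be used, and is the step I expect to require the most care.
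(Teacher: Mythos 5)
Your opening reduction (using Lemma~\ref{lem:raybuild} to replace consumption by honest inclusions of hyperplane sets) and your base case are sound, but the inductive step rests entirely on your ``key claim,'' and that claim is false. Take $\mathbf X=\mathbf R_2$ and let $\gamma$ be a diagonal staircase ray from the origin, so that $\mathcal W(\gamma)=\{V_i\}_{i\geq1}\cup\{W_j\}_{j\geq1}$, where the $V_i$ are the vertical and the $W_j$ the horizontal hyperplanes it crosses; then $D(\gamma)=2$. Let $\gamma'$ be the ray that goes up one step and then travels horizontally forever, so $\mathcal W(\gamma')=\{W_1\}\cup\{V_i\}_{i\geq1}\subseteq\mathcal W(\gamma)$. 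This set still contains the pairwise-crossing family $\{V_1,W_1\}$ of size $D(\gamma)=2$, yet $\mathcal W(\gamma)\setminus\mathcal W(\gamma')=\{W_j\}_{j\geq 2}$ is infinite, so $[\gamma']<[\gamma]$ strictly. Hence a strict descent need not lower $D$, your induction has no valid measure of progress, and it breaks precisely in the simplest situation the lemma is meant to address (the diagonal ray in a flat, whose minimal subclasses are the horizontal and vertical ones). No amount of tower bookkeeping with unidirectionality and facing triples will rescue the claim as stated, since the counterexample already satisfies all those hypotheses; and note you had in any case deferred rather than proved this step.

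The paper avoids any well-foundedness or descent argument and constructs the minimal class directly. Since $\mathcal W(\gamma)$ is infinite and $\mathbf X$ has no infinite pairwise-crossing family, one extracts (Ramsey) an infinite pairwise-non-crossing chain $\{H_n\}_{n\geq0}\subset\mathcal W(\gamma)$, ordered so that $H_n$ separates $\gamma(0)$ from $H_{n+1}$. For each $n$ one considers the combinatorial geodesics from $\gamma(0)$ to closest points of the carrier $N(H_n)$ --- the hyperplanes these cross are exactly those separating $\gamma(0)$ from $H_n$, hence lie in $\mathcal W(\gamma)$ --- and local finiteness plus K\"{o}nig's lemma yields a ray $\gamma_0$ with $\mathcal W(\gamma_0)$ equal to $\{H_n\}$ together with the hyperplanes separating some $H_n$ from $\gamma(0)$. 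Minimality of $[\gamma_0]$ then follows by inspecting this explicit description: any ray consumed by $\gamma_0$ crosses, up to a finite set, only hyperplanes of these two types, forcing almost-equivalence with $\gamma_0$. In the counterexample above, this construction applied to the diagonal $\gamma$ would produce (say) the horizontal ray, exactly the minimal class your descent scheme fails to reach. If you want to salvage your framework, the correct complexity is not the size of a crossing family inside $\mathcal W(\gamma)$ but the number of minimal classes below $[\gamma]$ (as in Lemma~\ref{lem:dimdefn}); however, establishing that this is finite presupposes the present lemma, so the direct chain construction is the right order of argument.
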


\begin{proof}
Since $\mathcal W(\gamma)$ is infinite, and there is no infinite family of pairwise-crossing hyperplanes in $\mathbf X$, there exists a collection $\{H_n\}_{n\geq 0}\subset\mathcal W(\gamma)$ of hyperplanes such that for all $m,n\geq 0$, the hyperplanes $H_m$ and $H_n$ do not cross.  These may be labeled so that for all $n$, $\gamma(0)$ is separated from $H_{n+1}$ by $H_n$.  For each $n\geq 0$, let $\{\sigma_n^i\}_{i\in I_n}$ be the set of combinatorial geodesic segments joining $\gamma(0)$ to a closest point of $N(H_n)$.  Then $\mathcal W(\sigma_n^i)$ is the set of hyperplanes that separate $\gamma(0)$ from $H_n$, and since $\gamma$ contains $\gamma(0)$ and intersects $H_n$, each of these hyperplanes must cross $\gamma$, i.e. $\mathcal W(\sigma_n^i)\subset\mathcal W(\gamma)$.

For $n\geq 0$, let $\mathcal S_n$ be the set of combinatorial geodesic segments $\sigma$, with $\sigma(0)=\gamma(0)$, that terminate on $N(H_n)$ and extend to a path of the form $\sigma^j_m$ for some $m\geq n$.  In particular, by considering $m=n$, we see that each $\sigma^j_n\in\mathcal S_n$.  Since $\mathbf X$ is locally finite, each element of $\mathcal S_n$ can be extended in at most finitely many ways to an element of $\mathcal S_{n+1}$, and each element of $\mathcal S_n$ contains a unique element of $\mathcal S_{n-1}$.  Hence K\"{o}nig's lemma yields a sequence $\sigma_0\subset\sigma_1\subset\ldots$ of combinatorial geodesic segments whose union $\gamma_0$ is a combinatorial geodesic ray with $\gamma_0(0)=\gamma(0)$.  Each hyperplane $H$ crossing $\gamma_0$ crosses a path of the form $\sigma^j_n$ for some $n\geq 0$, and thus belongs to $\mathcal W(\gamma)$.  Hence $[\gamma_0]\leq[\gamma]$, and each $H_n$ crosses $\gamma_0$.

Now, for all $n$, if the hyperplane $H$ crosses $\sigma_n$, then $H$ must separate $H_n$ from $\gamma(0)$.  Thus $\mathcal W(\gamma_0)$ consists of the set $\{H_n\}$, together with those hyperplanes $H$ that separate some $H_n$ from $\gamma(0)$.  Let $\beta$ be a combinatorial geodesic ray with $[\beta]\leq[\gamma_0]$.  Then each hyperplane crossing $\beta$ is either one of the $H_n$, or separates some $H_n$ from $\gamma(0)$, or belongs to the finite set $\mathcal W(\beta)-\mathcal W(\beta)\cap\mathcal W(\gamma_0)$.  Since every hyperplane crossing $\gamma_0$ is of one of the former two types, it follows that $[\beta]=[\gamma_0]$, so that $[\gamma_0]$ is minimal.
\end{proof}

\begin{lem}\label{lem:dimdefn}
Suppose that $\mathbf X$ is locally finite and contains no infinite set of pairwise-crossing hyperplanes, and let $\gamma\subseteq\mathbf X$ be a combinatorial geodesic ray.  Then there exists an integer $D\geq 0$ and combinatorial geodesic rays $\gamma_0,\ldots,\gamma_D$ such that for $0\leq i\leq d$, the class $[\gamma_i]$ is minimal, $[\gamma_i]\leq[\gamma]$, and if $[\sigma]\leq\gamma$ is minimal, then $[\sigma]=[\gamma_i]$ for some $i$.
\end{lem}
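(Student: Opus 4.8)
The plan is to prove the only substantive assertion, namely that there are \emph{finitely many} minimal classes $[\sigma]\leq[\gamma]$; once this is known I take $\gamma_0,\dots,\gamma_D$ to be representatives of these classes, and the requirements that $[\gamma_i]\leq[\gamma]$, that each $[\gamma_i]$ is minimal, and that every minimal $[\sigma]\leq[\gamma]$ equals some $[\gamma_i]$ are then immediate (Lemma~\ref{lem:minimalexists} already supplies at least one such class). Throughout I would use Lemma~\ref{lem:folding} to assume a common basepoint $x=\gamma(0)$ for every ray under consideration, and for each minimal $[\alpha]\leq[\gamma]$ set $A_\alpha=\mathcal W(\alpha)\cap\mathcal W(\gamma)$, which is cofinite in $\mathcal W(\alpha)$ since $\gamma$ consumes $\alpha$. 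The organizing device is the \emph{separation order} $\leq_\gamma$ on $\mathcal W(\gamma)$: for $U,V$ crossing $\gamma$ write $U\leq_\gamma V$ if the halfspace of $V$ not containing $x$ is contained in that of $U$. Two hyperplanes of $\mathcal W(\gamma)$ cross precisely when they are $\leq_\gamma$-incomparable, so the hypothesis that $\mathbf X$ has no infinite pairwise-crossing family becomes exactly the statement that $(\mathcal W(\gamma),\leq_\gamma)$ has no infinite antichain.

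First I would record that distinct minimal classes $[\alpha]\neq[\beta]$ below $[\gamma]$ have $A_\alpha\cap A_\beta$ finite: if it were infinite, Lemma~\ref{lem:raybuild} produces $[\sigma]$ with $[\sigma]\leq[\alpha]$ and $[\sigma]\leq[\beta]$, and minimality forces $[\sigma]=[\alpha]=[\beta]$. The technical heart is a crossing criterion: if $U\in A_\alpha$, $V\in A_\beta$ with $U\notin\mathcal W(\beta)$ and $V\notin\mathcal W(\alpha)$, then $U$ crosses $V$. This is clean precisely because of the common basepoint: since $U,V\in\mathcal W(\gamma)$, the tail of $\gamma$ lies in $\mathfrak h(U)\cap\mathfrak h(V)$ (the halfspaces not containing $x$), so that quarterspace is nonempty, ruling out the \emph{facing} non-crossing configuration; the two remaining nested configurations give $\mathfrak h(U)\subseteq\mathfrak h(V)$ or $\mathfrak h(V)\subseteq\mathfrak h(U)$, and then the ray whose hyperplane lies $\leq_\gamma$-deeper is driven across the shallower one, yielding $V\in\mathcal W(\alpha)$ or $U\in\mathcal W(\beta)$, against hypothesis. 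Writing $A_\alpha^{\ast}=A_\alpha\setminus\bigcup_{\beta\neq\alpha}\mathcal W(\beta)$ for the part of $A_\alpha$ private to $\alpha$, the criterion says that a choice of one hyperplane from each of finitely many distinct $A_\alpha^{\ast}$ is automatically a pairwise-crossing family.

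It then remains to rule out infinitely many minimal classes, the idea being to manufacture an infinite pairwise-crossing family and contradict the hypothesis. If infinitely many private sets $A_\alpha^{\ast}$ are nonempty, the crossing criterion finishes at once: one hyperplane from each is an infinite antichain. So the difficulty concentrates in the opposite situation, in which cofinitely many classes have $A_\alpha^{\ast}=\varnothing$, i.e. every hyperplane of every such class is shared with another class. Here I would run a reservoir-style induction: keep an infinite set of unused classes all of whose hyperplane sets avoid the finitely many hyperplanes already committed, and at each stage either locate a hyperplane in some reservoir class shared by only finitely many reservoir classes (commit it, and shrink the reservoir to the classes still avoiding it), or show that the failure of this forces, via Lemma~\ref{lem:raybuild} and minimality, two reservoir classes to share a cofinal inseparable chain and hence to coincide. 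The geometric brake on an eternal $\leq_\gamma$-``deepening'' of this process is that a single hyperplane crossed by infinitely many of the minimal rays pushes the whole configuration strictly deeper along $\gamma$; median convexity, together with Lemma~\ref{lem:raybuild2} (which realizes several totally-transverse directions on one ray), is what should prevent this recurring forever without collapsing two directions.

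The main obstacle is exactly this last extraction step. Almost-disjointness and the crossing criterion are soft, but they only furnish transversality \emph{after} deleting, for each pair of classes, a finite and possibly unbounded set of shared hyperplanes; pure counting permits configurations in which every individual hyperplane is shared by infinitely many classes while all pairwise intersections stay finite, so finiteness cannot follow from the combinatorics of $\leq_\gamma$ alone. Making the reservoir induction terminate --- equivalently, ruling out an infinitely branching family of minimal directions that perpetually re-bundles along a common chain --- is where the cube-complex geometry (convexity of carriers, the median structure, and Lemma~\ref{lem:raybuild2}) must genuinely be used, and this is the step I expect to demand the most care.
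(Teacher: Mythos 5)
Your reduction to finiteness of the set of minimal classes below $[\gamma]$, the observation via Lemma~\ref{lem:raybuild} (plus minimality) that distinct minimal classes have finite intersection, and your crossing criterion are all sound, and they track the skeleton of the paper's argument. In fact your halfspace proof of the criterion is cleaner than the paper's: the paper derives the same transversality with a disc-diagram argument (a dual curve to $H$ emanating from the $\gamma$-side of a geodesic triangle must end on the segment in $N(V)$, forcing $H$ to cross $V$), and then invokes the folding argument of~\cite{HagenBoundary} (Lemma~6.4 there) to upgrade ``all but finitely many pairs cross'' to ``all pairs of unshared hyperplanes cross.''  Your easy case --- infinitely many classes retaining a private hyperplane --- is also correctly dispatched, exactly as in the paper's endgame.

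However, the hard case is a genuine gap, and you diagnose it yourself: when every hyperplane of every class is shared, pairwise-finite intersections are compatible with sunflower-like configurations in which each individual hyperplane lies in infinitely many classes, and your ``reservoir induction'' is an unproved sketch with no termination mechanism --- the appeal to median convexity and Lemma~\ref{lem:raybuild2} is never developed into an argument. The paper's missing ingredient here is concrete and is not a counting device but a \emph{normalization of representatives}: using the folding argument of~\cite{HagenBoundary} (Lemma~3.22 there), one re-chooses rays $\beta_1,\ldots,\beta_n$ within their classes so that for each pair the shared hyperplanes are exactly those crossing a common initial geodesic segment $P_{i,j}=\beta_i\cap\beta_j$ based at $\gamma(0)$, and orders the family so these segments are nested: $P_{i-1,i}\subseteq P_{i,i+1}$ and $P_{i,j}\subseteq P_{i,j'}$ for $2\leq i<j'\leq j$. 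This converts the a priori wild pattern of pairwise intersections into a monotone one, namely $\mathcal W_{n-1}\cap\mathcal W_n\subseteq\mathcal W_n\cap\mathcal W_{n+1}$ and $\mathcal W_n\cap\mathcal W_{m'}\subseteq\mathcal W_n\cap\mathcal W_m$ for $n<m\leq m'$, where $\mathcal W_n=\mathcal W(\beta_n)$. Monotonicity is precisely what defeats the sunflower obstruction: any choice $H_n\in\mathcal W_n\setminus\mathcal W_{n+1}$ then automatically lies in no other $\mathcal W_m$, so the $H_n$ are distinct and, by the crossing criterion, pairwise crossing --- an infinite pairwise-crossing family, contradicting the hypothesis on $\mathbf X$. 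Without this normalization step (or a substitute for it), your proposal does not constitute a proof.
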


\begin{proof}
By Lemma~\ref{lem:minimalexists} and Lemma~\ref{lem:folding}, the set $\mathcal M$ of geodesic rays $\beta$ such that $\beta(0)=\gamma(0)$, $[\beta]\leq[\gamma]$, and $[\beta]$ is minimal, is nonempty.  Now, for any $\beta\in\mathcal M$, let $H$ be a hyperplane crossing $\gamma$ but not crossing $\beta$.  Then for all but finitely many hyperplanes $V$ crossing $\beta$, $H$ and $V$ must cross.  Indeed, if $V$ crosses $\beta$ and $\gamma$, and is dual to a 1-cube of $\gamma$ that is separated in $\gamma$ from $\gamma(0)$ by the 1-cube dual to $H$, then there is a geodesic triangle $ABC$, where $B$ is a path in $N(V)$ starting on $\beta$ and ending on $\gamma$, and $A,C$ are the subpaths of $\beta,\gamma$ between $\gamma(0)$ and $N(V)$.  This triangle bounds a disc diagram, one of whose dual curves emanates from $C$ and maps to $H$.  This dual curve must end on $B$ because $H$ does not cross $\beta$, and hence $H$ crosses $V$.

Therefore, if $\beta,\beta'\in\mathcal M$ and $[\beta]\neq[\beta']$, then for all but finitely many $H\in\mathcal W(\beta),H'\in\mathcal W(\beta')$, the hyperplanes $H,H'$ cross.  In fact, the above argument, together with the proof of~\cite[Lemma~6.4]{HagenBoundary} shows $\beta,\beta'$ can be chosen within their equivalence classes in such a way that $H,H'$ cross whenever $H$ crosses $\beta$ but not $\beta'$ and $H$ crosses $\beta'$ but not $\beta$.

Suppose that the set $[\mathcal M]$ of equivalence classes of rays in $\mathcal M$ is infinite.  Suppose also that for some $n\geq 1$, we have $\beta_1,\ldots,\beta_n\in\mathcal M$ with the following properties:
\begin{enumerate}
 \item $[\beta_i]\neq[\beta_j]$ for $i\neq j$.
 \item If $H$ crosses $\beta_i$ and $H'$ crosses $\beta_j$, with $i\neq j$, then either $H$ and $H'$ cross, or at least one of $H,H'$ crosses both $\beta_i$ and $\beta_j$.
 \item For $i\neq j$, the set $\mathcal W(\beta_i)\cap\mathcal W(\beta_j)$ is equal to the set of hyperplanes that cross a geodesic path $P_{i,j}=\beta_i\cap\beta_j$ whose initial point is $\gamma(0)$.
 \item $P_{i-1,i}\subseteq P_{i,i+1}$ and $P_{i,j}\subseteq P_{i,j'}$ for $2\leq i<j'\leq j\leq n$.
\end{enumerate}
Some such collections of rays are illustrated in Figure~\ref{fig:raystree}.  Note that the above conditions are vacuously satisfied when $n=1$.

\begin{figure}
\begin{overpic}[scale=0.40]{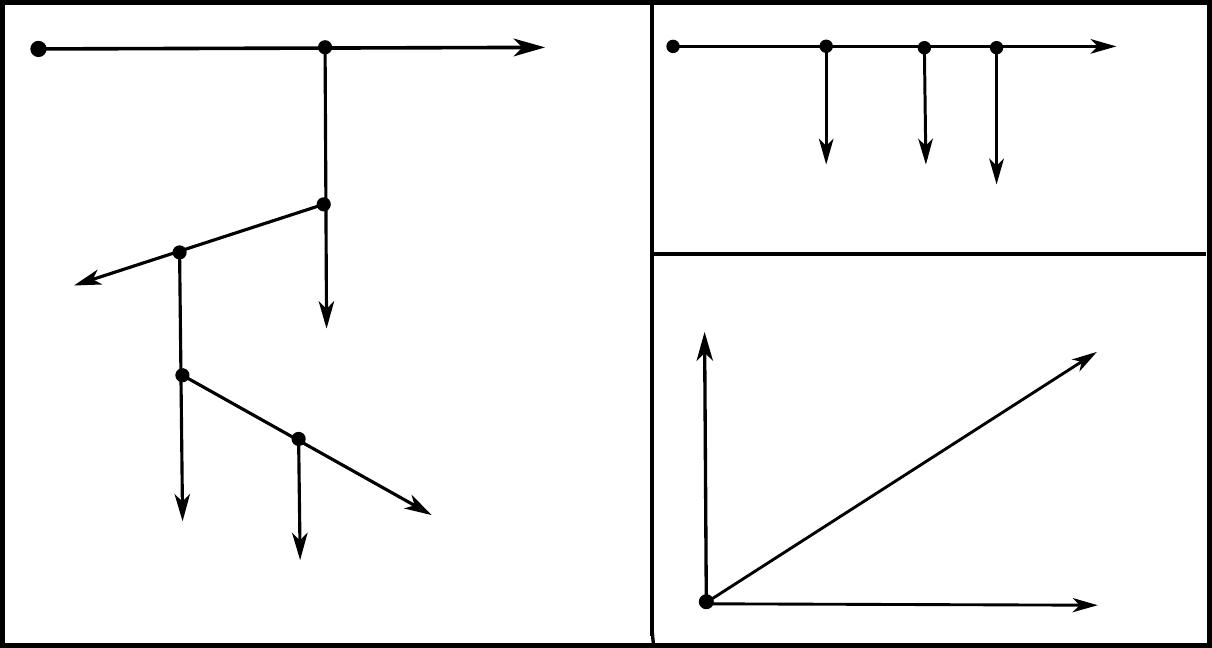}
\put(47,48){$1$}
\put(26,23){$2$}
\put(3,27.5){$3$}
\put(14,5.5){$4$}
\put(24,3.5){$5$}
\put(36,9){$6$}
\put(93.5,48){$1$}
\put(67.25,35.5){$2$}
\put(75.25,35.5){$3$}
\put(81.25,34){$4$}
\put(57,27){$1$}
\put(91,24){$2$}
\put(91,2){$3$}
\end{overpic}
\caption{The ``generic'' case is shown at left.  At right are two interesting special cases.}
\label{fig:raystree}
\end{figure}

Now, since $\mathcal M$ contains infinitely many equivalence classes of rays, there exists $\beta_{n+1}'$ that is inequivalent to $\beta_i$ for $1\leq i\leq n$.  The folding argument used in the proof of~\cite[Lemma~3.22]{HagenBoundary} shows that the set $\{[\beta_1],\ldots,[\beta_n],[\beta'_{n+1}]\}$ is represented by a set of $n+1$ combinatorial geodesic rays, each crossing the exact same set of hyperplanes as some $\beta_i$ or $\beta_{n+1}'$, satisfying the properties listed above.  We thus have a set $\{[\beta_n]\}_{n\in\naturals}$ of distinct equivalence classes of rays such that $\mathcal W_{n-1}\cap\mathcal W_n\subseteq\mathcal W_{n}\cap\mathcal W_{n+1}$ for $n\geq 2$, where $\mathcal W_n=\mathcal W(\beta_n)$, and $\mathcal W_{n}\cap\mathcal W_{m'}\subseteq\mathcal W_n\cap\mathcal W_m$ when $n<m\leq m'$.  Note that by minimality of the $\beta_i$, each of the preceding intersections is finite.

For each $n\geq 1$, let $H_n\in\mathcal W_n$ be a hyperplane that does not belong to $\mathcal W_{n+1}$.  Such an $H_n$ exists since $|\mathcal W_m\cap\mathcal W_n|<\infty$ for $m\neq n$.  For any $m>n$, $H_n$ does not lie in $\mathcal W_m$, since $\mathcal W_m\cap\mathcal W_n\subseteq\mathcal W_{n+1}\cap\mathcal W_n$.  On the other hand, $H_n\not\in\mathcal W_k$ for $k<m$, since $\mathcal W_n\cap\mathcal W_{n-1}\subseteq\mathcal W_n\cap\mathcal W_{n+1}$.  Thus $H_m\neq H_n$ for $m\neq n$, and moreover these two hyperplanes cross, since two hyperplanes in $\mathcal W_m\cup\mathcal W_n$ cross unless both belong to the intersection of those sets.  It follows that $\mathbf X$ contains an infinite set of pairwise-crossing hyperplanes, a contradiction.  Thus $|[\mathcal M]|=D+1$ for some integer $D\geq 0$.
\end{proof}

\begin{rem}
Note that we have required that any collection of pairwise crossing hyperplanes in $\mathbf X$ is finite, but we have not required a uniform upper bound on the cardinality of such collections, i.e. we do not need $\mathbf X$ to have finite dimension.  Throughout this paper, we always impose the former requirement on sets of pairwise-crossing hyperplanes.  Finite-dimensionality comes into play in the next section, where there is a cocompact group action.
\end{rem}

\begin{defn}[Invisible set]\label{defn:invisibleset}
Let $\gamma$ be a combinatorial geodesic ray and let $\gamma_0,\ldots,\gamma_D$ be geodesic rays provided by Lemma~\ref{lem:dimdefn}.  If $\mathcal U(\gamma)=\mathcal W(\gamma)-\cup_{i=0}^D\mathcal W(\gamma_i)$ is infinite, we call $\mathcal U(\gamma)$ an \emph{invisible set} for $\gamma$.  If $\mathcal U(\gamma)$ and $\mathcal U(\gamma')$ are invisible sets for the rays $\gamma,\gamma'$, then they are \emph{equivalent} if their symmetric difference is finite.  One can show that if $\mathcal U(\gamma)$ is an invisible set for a ray $\gamma'$, then $[\gamma']=[\gamma]$, but we shall not use this fact here.
\end{defn}

The \emph{simplicial boundary} $\simp\mathbf X$ is defined as follows.  First, for each minimal $[\gamma]\in\mathfrak R\mathbf X$, there is a 0-simplex in $\simp\mathbf X$.  If $\gamma_0,\ldots,\gamma_D$ are rays representing equivalence classes corresponding to 0-simplices, then these 0-simplices span a $D$-simplex if there is a ray $\gamma$ such that $[\gamma_i]\leq[\gamma]$ for $0\leq i\leq D$.  If $\mathcal W(\gamma)$ contains an invisible set, then we add a $(D+1)$-simplex that is the join of the above $D$-simplex with the 0-simplex corresponding to the invisible set $\mathcal U(\gamma)$.  Hence each equivalence class of rays in $\mathbf X$ determines a finite-dimensional simplex of $\simp\mathbf X$, by Lemma~\ref{lem:dimdefn}.  Moreover, since each invisible set is a subset of $\mathcal W(\gamma)$ for some ray $\gamma$, each maximal simplex of $\simp\mathbf X$ is determined by an equivalence class of rays, and is therefore finite-dimensional.

\begin{rem}[Invisible simplices]
By definition, $\simp\mathbf X$ may contain simplices that do not arise as almost-equivalence classes of combinatorial geodesic rays.  For example, if $\mathbf X$ is the cubical sector shown in Figure~\ref{fig:eighthflat}, then $\simp\mathbf X$ is a 1-simplex corresponding to the almost-equivalence class of diagonal geodesic rays that cross all but finitely many of the hyperplanes.  One of the 0-simplices is represented by the horizontal geodesic ray whose set of dual hyperplanes is exactly the set of vertical hyperplanes in $\mathbf X$, but since no ray crosses only horizontal hyperplanes, the other 0-simplex does not correspond to a class of rays.  Crucially, by Theorem~3.19 of~\cite{HagenBoundary}, maximal simplices of the simplicial boundary are visible, and the proof of that theorem also ensures that each invisible simplex is contained in a unique maximal simplex.  

\begin{figure}
  \vspace{-10pt}
  \includegraphics[width=0.25\textwidth]{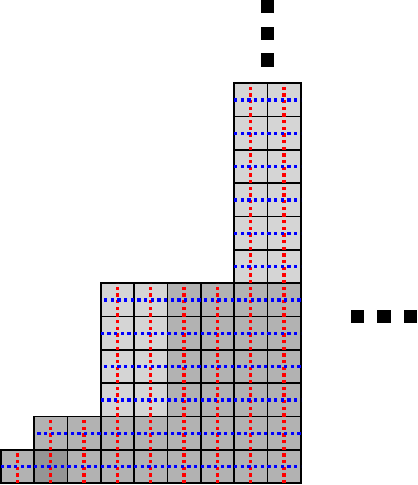}\\
  \caption{The simplicial boundary of an eighth-flat is a 1-simplex, one of whose 0-simplices is ``invisible'' in the cube complex.  Hyperplanes are shown as dotted line segments.}\label{fig:eighthflat}
  \vspace{-10pt}
\end{figure}
\end{rem}

\begin{exmp}\label{exmp:simplicialhyperbolic}
If $\mathbf X$ is an infinite, locally finite tree, then any two geodesic rays are either almost-equivalent, or neither consumes the other.  Hence $\simp\mathbf X$ is a discrete set of 0-simplices.  More generally, it is shown in~\cite{HagenBoundary} that if $\mathbf X$ is infinite and hyperbolic, then $\simp\mathbf X$ is a discrete set of 0-simplices.  If $\mathbf X$ is the standard tiling of $[0,\infty)^2$ by 2-cubes, then $\simp\mathbf X$ is a 1-simplex.  If $\mathbf X$ is the standard tiling of $[0,\infty)\times\reals$ by 2-cubes, then $\simp\mathbf X$ is a subdivided interval of length 2.  If $\mathbf X$ is the standard tiling of $\reals^2$ by 2-cubes, then $\simp\mathbf X$ is a 4-cycle.
\end{exmp}

Lemma~\ref{lem:highdimflat} describes the simplicial boundary of the standard tiling of Euclidean space by cubes, and follows from Theorem~3.28 of~\cite{HagenBoundary}.

\begin{lem}\label{lem:highdimflat}
Let $n\geq 1$ and let $\mathbf R_n$ be the standard tiling of $\Euclidean^n$ by $n$-cubes.  Then $\simp\mathbf R_n$ is isomorphic to the $(n-1)$-dimensional hyperoctahedron $\mathbf Q_n$.
\end{lem}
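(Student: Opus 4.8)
The plan is to exploit the product structure $\mathbf R_n\cong(\mathbf R_1)^n$ to classify combinatorial geodesic rays by their asymptotic behaviour in each coordinate, and then to match the resulting poset of almost-equivalence classes with the face poset of $\mathbf Q_n$. First I would record that the hyperplanes of $\mathbf R_n$ fall into $n$ \emph{families} $\mathcal F_1,\dots,\mathcal F_n$, where $\mathcal F_i$ consists of the hyperplanes $\{x_i=k+\tfrac12\}$, $k\in\integers$, dual to the $1$-cubes in the $i$-th coordinate direction; distinct families cross (every element of $\mathcal F_i$ crosses every element of $\mathcal F_j$ for $i\neq j$), while within a single family the hyperplanes are parallel and pairwise non-crossing. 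Since $\mathbf R_n$ is the cubical product of $n$ lines, any combinatorial geodesic projects to a combinatorial geodesic in each line factor, and a geodesic in $\mathbf R_1$ is monotone; hence each coordinate function $x_i\circ\gamma$ is monotone along any combinatorial geodesic ray $\gamma$. I therefore assign to $\gamma$ a \emph{type} $\epsilon(\gamma)=(\epsilon_1,\dots,\epsilon_n)\in\{+,-,0\}^n$, where $\epsilon_i\in\{+,-\}$ according as $x_i\circ\gamma\to\pm\infty$, and $\epsilon_i=0$ when $x_i\circ\gamma$ is bounded; at least one $\epsilon_i$ is nonzero since $\gamma$ is a ray.

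The key computation is that $\mathcal W(\gamma)$ is determined up to a finite set by $\epsilon(\gamma)$: in a direction $i$ with $\epsilon_i=+$ (resp. $-$) the ray crosses all but finitely many hyperplanes of $\mathcal F_i$ on the positive (resp. negative) side, and in a direction with $\epsilon_i=0$ it crosses only finitely many hyperplanes of $\mathcal F_i$. From this I read off three facts. (i) Two rays are almost-equivalent if and only if they have the same type, so almost-equivalence classes correspond bijectively to nonzero vectors in $\{+,-,0\}^n$. (ii) $[\gamma']\leq[\gamma]$ if and only if $\{(i,\epsilon_i(\gamma')):\epsilon_i(\gamma')\neq0\}\subseteq\{(i,\epsilon_i(\gamma)):\epsilon_i(\gamma)\neq0\}$, i.e. the nonzero coordinates of $\gamma'$ occur, with the same sign, among those of $\gamma$. (iii) Consequently the minimal classes, in the sense following Lemma~\ref{lem:minimalexists}, are exactly those supported on a single coordinate, namely the $2n$ ``axis rays'' of types $\pm\dot e_i$; these furnish the $2n$ $0$-simplices of $\simp\mathbf R_n$, which I match with the $2n$ vertices $\pm\dot e_i$ of $\mathbf Q_n$.

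Next I would determine the higher simplices. By (ii), a common upper bound for minimal classes of types $\pm\dot e_{i_1},\dots,\pm\dot e_{i_k}$ is a type vector realizing all the prescribed signs at once, and such a vector exists precisely when no two of the chosen types are antipodal, i.e. when the collection contains no pair $\{+\dot e_i,-\dot e_i\}$; in that case the monotone ``staircase'' ray approximating the diagonal direction $\sum_j\pm\dot e_{i_j}$ is a combinatorial geodesic of the required type (alternatively one applies Lemma~\ref{lem:raybuild2}, using that hyperplanes from distinct families cross). Hence, via Lemma~\ref{lem:dimdefn}, a set of distinct $0$-simplices of $\simp\mathbf R_n$ spans a simplex exactly when it contains no antipodal pair, with maximal simplices choosing one of $\pm\dot e_i$ for each $i$ and therefore having dimension $n-1$. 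This is precisely the face poset of the $n$-fold join $\mathbf Q_1\star\cdots\star\mathbf Q_1=\mathbf Q_n$, in which each $\mathbf Q_1$ contributes the non-adjacent pair $\{+\dot e_i,-\dot e_i\}$. Finally I must exclude invisible simplices: for any $\gamma$ the minimal classes below it are exactly the axis rays in its support, and their hyperplane sets cover all but finitely many elements of $\mathcal W(\gamma)$, so the invisible set $\mathcal U(\gamma)$ of Definition~\ref{defn:invisibleset} is finite and no extra simplices arise. The assignment $\pm\dot e_i\mapsto\pm\dot e_i$ then extends to the desired simplicial isomorphism $\simp\mathbf R_n\cong\mathbf Q_n$.

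The main obstacle is the coordinate-monotonicity step together with the bookkeeping in the ``key computation'': one must verify carefully, using that geodesics in the cubical line are monotone and that the families $\mathcal F_i$ cross pairwise across distinct $i$ but are nested within a single $i$, that the symmetric-difference and containment relations among the sets $\mathcal W(\gamma)$ are genuinely governed by the finite combinatorial datum $\epsilon(\gamma)$. Once this dictionary is in place, the identification of the simplicial boundary with $\mathbf Q_n$ reduces to the observation that ``no antipodal pair'' is the defining incidence relation of the boundary of the cross-polytope.
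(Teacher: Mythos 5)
Your proof is correct, but it takes a genuinely different route from the paper, which offers no standalone argument: it simply observes that the lemma ``follows from Theorem~3.28 of~\cite{HagenBoundary},'' i.e.\ from the general fact that the simplicial boundary of a product of cube complexes is the join of their simplicial boundaries, so that $\simp\mathbf R_n\cong(\simp\mathbf R_1)^{\star n}\cong\mathbf Q_1^{\star n}=\mathbf Q_n$. You instead give a self-contained computation: monotonicity of each coordinate along a combinatorial geodesic (which holds because the projection to a line factor crosses each hyperplane of that factor at most once) yields the type vector $\epsilon(\gamma)\in\{+,-,0\}^n$, and your dictionary between types and the sets $\mathcal W(\gamma)$ modulo finite sets correctly identifies almost-equivalence, the consumption order, the $2n$ minimal classes, and the ``no antipodal pair'' condition for spanning a simplex -- exactly the face poset of the boundary of the cross-polytope. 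Your treatment of the two points where a direct argument could silently go wrong is also sound: the existence of common upper bounds is witnessed by staircase rays (or by Lemma~\ref{lem:raybuild2}, since hyperplanes in distinct families cross), and invisible simplices are excluded because the axis rays in the support of $\gamma$, suitably based, cover $\mathcal W(\gamma)$ up to a finite set, so $\mathcal U(\gamma)$ is finite. What each approach buys: the citation is shorter and leverages a theorem that handles arbitrary products, which the paper reuses elsewhere (Remark~\ref{rem:alternative}, Lemma~\ref{lem:actsonrn}); your argument is longer but works directly from the definitions in Section~\ref{sec:boundary} of this paper, making the base case of the paper's induction verifiable without importing the machinery of~\cite{HagenBoundary}, and it is in the same spirit as the hands-on style the author advertises for the proof of Theorem~A.
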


\begin{rem}
By regarding $\simp\mathbf X$ as being constructed from right-angled spherical simplices in which 1-simplices have length $\frac{\pi}{2}$, one realizes $\simp\mathbf X$ as a CAT(1) space.  It is shown in~\cite[Proposition~3.37]{HagenBoundary} that if $\mathbf X$ is fully visible, then $\simp\mathbf X$, as a CAT(1) space, isometrically embeds in the Tits boundary of $\mathbf X$ (when the latter is endowed with the piecewise-Euclidean CAT(0) metric in which all cubes are Euclidean unit cubes).
\end{rem}


Part of the utility of the simplicial boundary comes from the fact that an action of a group $G$ on $\mathbf X$ induces an action of $G$ on $\simp\mathbf X$.

\begin{prop}\label{prop:inducedaction}
Suppose $G$ acts on the CAT(0) cube complex $\mathbf X$.  Then $G$ acts by simplicial automorphisms on $\simp\mathbf X$.
\end{prop}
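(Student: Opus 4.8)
The plan is to observe that the entire construction of $\simp\mathbf X$ is built from a single piece of $G$-invariant combinatorial data --- the set $\mathcal W$ of hyperplanes together with the incidence between rays and hyperplanes --- and then to check that each successive layer of the definition is carried to itself by any $g\in G$.

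First I would record the base observation. Since $G$ acts by cubical automorphisms, each $g\in G$ carries $2$-cubes to $2$-cubes and opposite boundary $1$-cubes to opposite boundary $1$-cubes, so $g$ preserves the Djokovi\'{c}-Winkler relation, exactly as noted in Section~\ref{sec:cubicalisometries}, and therefore permutes the hyperplanes. Thus $g$ induces a bijection $\mathcal W\rightarrow\mathcal W$, $H\mapsto gH$. Moreover $g$ sends combinatorial geodesic rays to combinatorial geodesic rays, since it takes $0$-cubes to $0$-cubes, sends $1$-cubes isometrically to $1$-cubes, and preserves the duality between $1$-cubes and hyperplanes; and a hyperplane $H$ crosses $\gamma$ if and only if $gH$ crosses $g\gamma$, so that $\mathcal W(g\gamma)=g\,\mathcal W(\gamma)$.

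Next I would propagate this through the definitions. Because $g$ is a bijection of $\mathcal W$, it carries finite sets to finite sets, so $\mathcal W(\gamma)-\mathcal W(\gamma)\cap\mathcal W(\gamma')$ is finite if and only if its $g$-image $\mathcal W(g\gamma)-\mathcal W(g\gamma)\cap\mathcal W(g\gamma')$ is finite. Hence $\gamma'$ consumes $\gamma$ if and only if $g\gamma'$ consumes $g\gamma$; consequently almost-equivalence, the partial order $\leq$, and minimality are all $g$-invariant, and $[\gamma]\mapsto[g\gamma]$ is a well-defined bijection of the set of minimal classes, i.e.\ a bijection on the $0$-simplices of $\simp\mathbf X$. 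The same principle handles the higher-dimensional simplices: if $[\gamma_i]\leq[\gamma]$ for $0\leq i\leq D$, then $[g\gamma_i]\leq[g\gamma]$, so $g$ carries a spanning $D$-simplex to a spanning $D$-simplex; and since the rays $g\gamma_0,\ldots,g\gamma_D$ exhaust the minimal classes below $[g\gamma]$, we obtain $\mathcal U(g\gamma)=g\,\mathcal U(\gamma)$, whence invisible sets are carried to invisible sets and their equivalence, namely finiteness of symmetric difference, is preserved. Because $g$ transports the combinatorial data defining each simplex to the corresponding data for its image, it respects face inclusions and therefore induces a simplicial automorphism $g_*$ of $\simp\mathbf X$.

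Finally I would verify that $g\mapsto g_*$ is an action, which is immediate from functoriality: the relation $(gh)\gamma=g(h\gamma)$ on rays yields $(gh)_*=g_*h_*$, and the identity acts trivially. The genuine content lies entirely in the first two steps; everything afterward is formal bookkeeping. The only point demanding care is that the choice-dependent object $\mathcal U(\gamma)$ of Definition~\ref{defn:invisibleset} transforms correctly, and this reduces to the observation that $g$ sends a system of minimal rays below $[\gamma]$ furnished by Lemma~\ref{lem:dimdefn} to such a system below $[g\gamma]$, so that no ambiguity is introduced.
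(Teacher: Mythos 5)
Your proposal is correct and takes essentially the same approach as the paper: the published proof likewise transports the $G$-action on the set of hyperplanes through the relation of consumption to conclude that almost-equivalence classes and the partial order $\leq$ are preserved, which yields the simplicial action. If anything, you are slightly more thorough, since the paper dispatches the preservation of $\leq$ with ``analogous considerations'' and does not explicitly address invisible simplices or the action axioms, both of which you verify.
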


\begin{proof}
Let $g\in G$ and let $\gamma:[0,\infty)\rightarrow\mathbf X$ be a combinatorial geodesic ray.  Then $g\gamma$ is also a combinatorial geodesic ray, since $G$ acts by isometries.  Now, if $\gamma'$ is a geodesic ray for which $|\mathcal W(\gamma)\cap\mathcal W(\gamma')|<\infty$, then since $G$ acts on the set of hyperplanes, the set
\[\mathcal W(g\gamma)\cap\mathcal W(g\gamma')=g\mathcal W(\gamma)\cap g\mathcal W(\gamma')=g(\mathcal W(\gamma)\cap\mathcal W(\gamma'))\]
is finite, being a translate of a finite set.  Hence $[g\gamma]=[g\gamma']$, i.e. the $G$-action on the set of combinatorial geodesic rays preserves almost-equivalence classes, and so we define a $G$-action on $\mathfrak R\mathbf X$ by $g[\gamma]=[g\gamma]$.  Analogous considerations show that, for all $g\in G$ and all $[\gamma],[\gamma']\in\mathfrak R\mathbf X$, if $[\gamma]\leq[\gamma']$, then $g[\gamma]\leq g[\gamma']$, and thus $G$ preserves the partial order $\leq$.  Hence $G$ acts by simplicial automorphisms on $\simp\mathbf X$.
\end{proof}

One can check that elliptic elements of $G$ may act trivially or nontrivially on $\simp\mathbf X$, but that if $g\in G$ acts as a hyperbolic isometry of $\mathbf X$, then there are two distinct simplices of $\simp\mathbf X$, corresponding to the ends of an axis for $g$, each of which is stabilized by $g$.

\subsubsection{Boundaries of cubulations of virtually-$\integers$ groups}\label{subsec:virtually_Z_cubulations}
The following fact forms the base case in our inductive proof of Theorem~\ref{thm:boundaryoctahedron} by describing the simplicial boundary of a cocompact cubulation of a virtually $\integers$ group.

\begin{lem}\label{lem:case1}
Let the group $G$ act properly and cocompactly on a CAT(0) cube complex $\mathbf X$ that is quasi-isometric to $\reals$.  Then $\simp\mathbf X$ consists of two non-adjacent 0-simplices.
\end{lem}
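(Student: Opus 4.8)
The plan is to show that a CAT(0) cube complex $\mathbf X$ quasi-isometric to $\reals$ has exactly two minimal almost-equivalence classes of rays, which pair up into two non-adjacent 0-simplices of $\simp\mathbf X$. First I would fix a basepoint $x_0\in\mathbf X^0$ and observe that since $\mathbf X$ is quasi-isometric to $\reals$, it is hyperbolic, so by Example~\ref{exmp:simplicialhyperbolic} (and the general fact from~\cite{HagenBoundary} cited there) $\simp\mathbf X$ is already a discrete set of 0-simplices; in particular every class $[\gamma]$ is minimal and $\simp\mathbf X$ has no higher simplices. Thus the entire content is to prove that there are \emph{exactly two} classes and that they are not joined by a 1-simplex.

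To count the classes, I would use the quasi-isometry $f:\mathbf X\to\reals$ together with properness and cocompactness of the $G$-action. The two ends of $\reals$ pull back to two ``directions'' in $\mathbf X$, and I would argue that any combinatorial geodesic ray $\gamma$ eventually tracks one of these two ends: composing $\gamma$ with $f$ gives a quasigeodesic ray in $\reals$, which must be eventually increasing or eventually decreasing, so $f\circ\gamma\to+\infty$ or $f\circ\gamma\to-\infty$. I would then show that two rays heading toward the same end are almost-equivalent. The key step here is that if $\gamma,\gamma'$ both satisfy $f\circ\gamma,f\circ\gamma'\to+\infty$, then $\mathcal W(\gamma)$ and $\mathcal W(\gamma')$ agree up to a finite set: a hyperplane $H$ crossing $\gamma$ separates $\mathbf X$ into two halfspaces, and because $f$ is a quasi-isometry onto $\reals$ the $f$-images of these halfspaces are (coarsely) the two sides of a point, so all but finitely many such $H$ must also separate the two points of $\gamma'$ far out in the positive direction and hence cross $\gamma'$. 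Symmetrically $\gamma'$ consumes $\gamma$, giving $[\gamma]=[\gamma']$. This produces at most two classes, and Lemma~\ref{lem:folding} guarantees both are realized by actual rays based at $x_0$, so there are exactly two.

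Finally, I would verify the two 0-simplices $[\gamma^+]$ and $[\gamma^-]$ are non-adjacent. By the definition of $\simp\mathbf X$, they would span a 1-simplex only if there were a ray $\gamma$ with $[\gamma^+]\leq[\gamma]$ and $[\gamma^-]\leq[\gamma]$, i.e.\ a single ray consuming rays heading to both ends. But $\gamma$ heads to exactly one end, so $\mathcal W(\gamma)$ can contain cofinitely many hyperplanes of only one of $\mathcal W(\gamma^+),\mathcal W(\gamma^-)$; since $\mathcal W(\gamma^+)\cap\mathcal W(\gamma^-)$ is finite (the separating hyperplanes for the two ends are essentially disjoint), $\gamma$ cannot consume both, so no such $\gamma$ exists and the 0-simplices are non-adjacent.

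The main obstacle I anticipate is making precise the claim that ``heading to the same end'' forces almost-equivalence at the level of hyperplane sets rather than merely at the level of Hausdorff-distance of rays. The subtlety is that almost-equivalence is defined combinatorially via $\mathcal W(\gamma)\triangle\mathcal W(\gamma')$ being finite, so I must translate the metric/quasi-isometry statement into a statement about which hyperplanes separate which pairs of points; the cleanest route is probably to use that the number of hyperplanes separating $x$ from $y$ equals $d_{\mathbf X}(x,y)$, control this count via the quasi-isometry constants, and show directly that only boundedly many hyperplanes crossing $\gamma$ fail to cross $\gamma'$.
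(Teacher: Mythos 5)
Your proposal is correct in outline but takes a genuinely different route from the paper. The paper never invokes hyperbolicity or the discreteness result quoted in Example~\ref{exmp:simplicialhyperbolic}: instead it uses that $G$ is two-ended to extract a finite-index cyclic subgroup $\langle b\rangle$, applies the semisimplicity of cubical isometries~\cite{HaglundSemisimple} (after passing to a power) to obtain a combinatorial geodesic \emph{axis} $\beta$ for $b$, and anchors the whole argument to $\beta$. This buys two things at once: the half-axes $\beta_{\pm}$ satisfy $\mathcal W(\beta_+)\cap\mathcal W(\beta_-)=\emptyset$ \emph{exactly} (since $\beta$ is a geodesic), so distinctness and non-adjacency are immediate; and quasi-surjectivity of the axis puts every combinatorial geodesic ray $\gamma$ in a uniform $\kappa$-neighborhood of $\beta_+$ or $\beta_-$, after which the bound $|\mathcal W(\gamma)\triangle\mathcal W(\beta_+)|\leq 2\kappa+2d_{\mathbf X}(\gamma(0),\beta_+(0))$ falls out in two lines, because each exceptional hyperplane either separates a tail of one ray from the other or separates the basepoints. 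Your version imports minimality, zero-dimensionality and non-adjacency wholesale from the hyperbolic case of~\cite{HagenBoundary} and then counts classes directly via the quasi-isometry $f:\mathbf X\to\reals$; this is more economical but less self-contained, and the paper explicitly advertises that its argument proceeds directly from the definition of $\simp\mathbf X$.

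One step of your sketch is false as literally stated, though you correctly flagged the neighborhood of the problem yourself: it is \emph{not} true that for every hyperplane $H$ crossing $\gamma$ the $f$-images of the two halfspaces are coarsely the two sides of a point. In $\reals\times[0,1]$ tiled by unit squares (which is quasi-isometric to $\reals$), the horizontal hyperplane has both halfspaces coarsely surjecting onto all of $\reals$; and a hyperplane one of whose halfspaces is bounded separates no pair of ends at all. To justify your ``all but finitely many'' hedge you need two supplementary observations: (i) a locally finite, cocompact $\mathbf X$ quasi-isometric to $\reals$ has only finitely many unbounded hyperplanes, since each unbounded hyperplane meets every sufficiently far-out ``slab'' $f^{-1}([R-1,R+1])$, these slabs have uniformly bounded diameter, and local finiteness plus cocompactness uniformly bounds the number of hyperplanes meeting any slab; and (ii) a geodesic ray crosses a hyperplane with a bounded halfspace only if its initial point lies in that bounded halfspace (otherwise the infinite tail would be trapped in a bounded set), and only finitely many hyperplanes have this property relative to a fixed basepoint. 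Every remaining hyperplane is bounded with both halfspaces unbounded, and such a hyperplane does coarsely separate the two ends, so your counting argument then closes. Note also that Lemma~\ref{lem:folding} only rebases a ray within its almost-equivalence class; existence of rays tending to \emph{each} end needs a separate (routine) K\"{o}nig's lemma argument, whereas the paper gets representatives of both classes for free from the two halves of the axis.
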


An example of a cube complex of the type described in Lemma~\ref{lem:case1} is shown in Figure~\ref{fig:zcocompact}.

\begin{proof}[Proof of Lemma~\ref{lem:case1}]
$\mathbf X$ is quasi-isometric to $\reals$ and to $G$, and thus $G$ is 2-ended.  Hence there is a finite-index infinite cyclic subgroup $G'\leq G$, generated by an element $b$, and $G'$ acts properly and cocompactly on $\mathbf X$.  The element $b$ cannot be elliptic, and, by passing if necessary to a further finite-index cyclic subgroup, we may assume that $b$ is combinatorially hyperbolic.  Let $\beta:\reals\rightarrow\mathbf X$ be a combinatorial geodesic axis for $b$, and write
\[\beta_+=\beta([0,\infty)),\,\,\beta_-=\beta((-\infty,0]).\]
Let $v_+$ be the simplex of $\simp\mathbf X$ represented by $\beta_+$ and let $v_-$ be the simplex represented by $\beta_-$.  Now, since $\beta$ is a geodesic, no hyperplane crosses $\beta_+$ and $\beta_-$, so that $v_+$ and $v_-$ are distinct simplices.

Let $\gamma:[0,\infty)\rightarrow\mathbf X$ be a combinatorial geodesic ray.  Let $\kappa\geq 0$ be the quasi-surjectivity constant of the map $\beta:\reals\rightarrow\mathbf X$.  For all $t\geq 0$,
\[d_{\mathbf X}(\gamma(t),\beta(t))\leq \kappa.\]
Hence $\gamma$ lies in the $\kappa$-neighborhood of $\beta_+$ or $\beta_-$.  Without loss of generality, we can assume that $d_{\mathbf X}(\gamma(t),\beta_+)\leq \kappa$ for all $t\geq 0$.

Let $U\in\mathcal W(\gamma)-\mathcal W(\beta_+)$ be dual to the 1-cube $\gamma([s,s+1])$.  Then $U$ separates $\gamma([s+1,\infty))$ from $\beta_+$, or $U$ separates $\gamma(0)$ from $\beta_+(0)$.  Hence $|\mathcal W(\gamma)-\mathcal W(\beta_+)|\leq \kappa+d_{\mathbf X}(\gamma(0),\beta_+(0)).$
On the other hand, suppose that $V\in\mathcal W(\beta_+)-\mathcal W(\gamma)$.  Then either $V$ separates $\gamma(0)$ from $\beta_+(0)$, or $V$ separates $\gamma$ from an infinite sub-ray of $\beta_+$, and we conclude that $|\mathcal W(\gamma)\triangle\mathcal W(\beta_+)|\leq 2\kappa+2d_{\mathbf X}(\gamma(0),\beta_+(0)),$
so that $[\gamma]=[\beta_+]$.  Hence every combinatorial geodesic ray is almost-equivalent to either $\beta_+$ or $\beta_-$, and thus $v_+$ and $v_-$ are distinct 0-simplices whose disjoint union is the whole of $\simp\mathbf X$.
\end{proof}

\begin{figure}
  \includegraphics[width=0.5\textwidth]{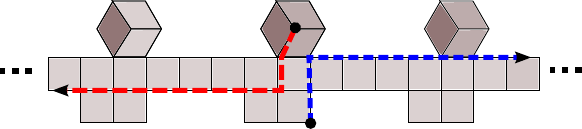}\\
  \caption{Part of a $\integers$-cocompact CAT(0) cube complex.  Each combinatorial geodesic ray is almost-equivalent to one of the two dashed rays.}\label{fig:zcocompact}
  \vspace{-10pt}
\end{figure}

\section{Hyperoctahedral boundary}\label{sec:bounarychar}
In this section, which is devoted to the proof of Theorem~A, $G$ is a group that acts properly and cocompactly on a CAT(0) cube complex $\mathbf X$ and contains a finite-index subgroup $T_{_G}\cong\integers^n$ for some $n\geq 1$.  Let $H$ be a $G$-essential hyperplane, and let $\mathbf H$ be the carrier of $H$. Let $G_{n-1}=\stabilizer_{\mathbf X}(H)$, and let $T_{n-1}=G_{n-1}\cap T_{_G}$.  We have in mind the case where $G$ is an $n$-dimensional crystallographic group and $T_{_G}$ is the translation subgroup.  In fact, it will be convenient to reduce to this case.

\begin{thm}\label{thm:boundaryoctahedron}
Let $G$ be a group with a finite-index subgroup $T_{_G}\cong\integers^n$, with $n\geq 1$.  If $G$ acts properly and cocompactly on the CAT(0) cube complex $\mathbf X$, then the simplicial boundary of $\mathbf X$ is isomorphic to $\mathbf Q_n$, the $(n-1)$-dimensional hyperoctahedron.
\end{thm}

\begin{proof}
The claim follows from Lemma~\ref{lem:case1} when $n=1$.  For $n\geq 2$, we first note that we can assume that $G$ is an $n$-dimensional crystallographic group; this is needed in the proof of Lemma~\ref{lem:induction1}, where we require a $G$-equivariant quasi-isometry $\mathbf X\rightarrow\Euclidean^n$.  The assumption that $G$ is crystallographic is justified by noting that $G$ has a finite-index subgroup $G'$ with a geometric action on $\Euclidean^n$ -- at minimum, $T_{_G}$ is such a subgroup.  The subgroup $G'$ acts properly and cocompactly on $\mathbf X$ if $G$ does, and the desired conclusion is a statement about $\mathbf X$.  We thus henceforth assume that $G$ is an $n$-dimensional crystallographic group whose translation subgroup $T_{_G}$ is the kernel of the epimorphism $\psi:G\rightarrow P_{_G}$, where $P_{_G}$ is the point group.

We have $\simp\mathbf H\cong\mathbf Q_{n-1}$, by induction, since, by Lemma~\ref{lem:bnminus1}, $\mathbf H$ is a CAT(0) cube complex on which $T_{n-1}$ acts properly and cocompactly.  By Lemma~\ref{lem:induction1}, $\simp\mathbf X\cong\simp\mathbf H\star\mathbf Q_1\cong\mathbf Q_n$.
\end{proof}


\begin{rem}\label{rem:alternative}
There is an alternative proof of Theorem~\ref{thm:boundaryoctahedron} using the \emph{rank-rigidity theorem} of~\cite{CapraceSageev}, along with results in~\cite{HagenBoundary}.  Indeed, if $\integers^n$ acts properly and cocompactly on $\mathbf X$, then since $\integers^n$ contains no rank-one element for $n\geq 1$, the $\integers^n$-\emph{essential core} of $\mathbf X$ is a product $\mathbf X_1\times\mathbf X_{n-1}$ of CAT(0) cube complexes, whose simplicial boundary is $\simp\mathbf X_1\star\simp\mathbf X_{n-1}$, by Theorem~3.28 of~\cite{HagenBoundary}.  Lemma~\ref{lem:case1} shows that, if $n=1$, then the essential core has simplicial boundary $\mathbf Q_1$, and it follows by induction that the essential core of $\mathbf X$ has simplicial boundary $\mathbf Q_n$.  The final step is to verify that passing to the essential core does not affect the simplicial boundary.  The proof we give below is self-contained, however, and seems more readily adaptable to other classes of groups and cube complexes.
\end{rem}

\subsection{The inductive step}
Let $n\geq 2$ and let the $n$-dimensional crystallographic group $G$ act properly and cocompactly on $\mathbf X$.  The first step is to show that the CAT(0) cube complex $\mathbf H$ admits a proper, cocompact action by an $(n-1)$-dimensional crystallographic group:

\begin{lem}\label{lem:bnminus1}
$T_{n-1}$ is a finite-index subgroup of $G_{n-1}$, and $T_{n-1}\cong\integers^{n-1}$.
Moreover, $T_{n-1}$ acts cocompactly on $\mathbf H$.
\end{lem}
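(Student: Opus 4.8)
The plan is to treat the three assertions in turn, leaning on the exact sequence $1\to T_G\to G\stackrel{\psi}{\to}P_G\to 1$ and on standard facts about carriers. For the finite-index claim I would argue purely group-theoretically: since $T_G\trianglelefteq G$ with $[G:T_G]=|P_G|<\infty$, the second isomorphism theorem gives $G_{n-1}/(G_{n-1}\cap T_G)\cong G_{n-1}T_G/T_G\le G/T_G$, so $[G_{n-1}:T_{n-1}]\le|P_G|<\infty$. For cocompactness on $\mathbf H$ I would first show that $G_{n-1}$ acts cocompactly on $N(H)=\mathbf H$ and then invoke the finite index just established. The cocompactness of $G_{n-1}$ on $\mathbf H$ is the usual carrier argument: fix a compact fundamental domain $K$ for $G$ on $\mathbf X$; every cube $c\subseteq\mathbf H$ has the form $c=g c_0$ with $c_0\subseteq K$, whence $c_0\subseteq N(g^{-1}H)$ and $g^{-1}H$ is one of the finitely many hyperplanes meeting $K$. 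Choosing, for each such hyperplane lying in the $G$-orbit of $H$, a group element carrying it to $H$, one writes $g=sk$ with $s\in G_{n-1}$ and $k$ ranging over a finite set, exhibiting $c$ as a $G_{n-1}$-translate of one of finitely many cubes. Since $T_{n-1}$ has finite index in $G_{n-1}$, it too acts cocompactly on $\mathbf H$.

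It remains to prove $T_{n-1}\cong\integers^{n-1}$. As a subgroup of $T_G\cong\integers^n$ we have $T_{n-1}\cong\integers^a$ for some $0\le a\le n$, and everything reduces to showing $a=n-1$, equivalently that the quotient $T_G/T_{n-1}$—which is exactly the $T_G$-orbit $T_G\cdot H$, viewed as a $T_G$-set—has torsion-free rank $1$. For the upper bound $a\le n-1$ I would show this orbit is infinite using that $H$ is $G$-essential. Given a basepoint $x_0$ and writing each $g\in G$ as $t\gamma_i$ with $t\in T_G$ and $\gamma_i$ from a fixed finite transversal, the points $\gamma_i x_0$ lie within a bounded radius $R$ of $x_0$, so the defining property of a $G$-essential hyperplane—supplying $gx_0\in\mathfrak h(H)$ at distance $\ge N$ from $N(H)$—also places $tx_0$ in $\mathfrak h(H)$ at distance $\ge N-R$. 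Applying $t^{-1}$ exhibits $x_0$ at unbounded distance from the translates $t^{-1}H$, so $T_G\cdot H$ is infinite; were $a=n$, the stabilizer $T_{n-1}$ would have finite index in $T_G$ and the orbit would be finite, a contradiction. Hence $a\le n-1$.

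The lower bound $a\ge n-1$ is the step I expect to be the main obstacle, since it is where the codimension-one nature of $H$ must be converted into a rank statement. My preferred elementary route is to show that $T_G\cdot H$ is a coherently oriented, pairwise non-crossing family of hyperplanes: after passing to the index-two subgroup preserving the two sides of $H$ (which does not affect the rank), the assignment $tH\mapsto t\mathfrak h(H)$ linearly orders the orbit, and the hypothesis that $\mathbf X$ contains no infinite family of pairwise-crossing hyperplanes—together with the homogeneity forced by $T_G$ being abelian, so that $t^iH$ and $t^jH$ cross if and only if $H$ and $t^{j-i}H$ do—forces the orbit to be order-isomorphic to $\integers$. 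This identifies $T_G/T_{n-1}$ with an infinite cyclic group of order-shifts, giving rank exactly $1$ and hence $a=n-1$. The delicate point is precisely ruling out (or absorbing) crossings among the translates. Should the hands-on argument prove recalcitrant, the robust fallback is the rank-rigidity product structure described in Remark~\ref{rem:alternative}, in which the essential core splits as a product of $n$ CAT(0) cube complexes quasi-isometric to $\reals$ and $H$ is a slab omitting a single factor, so that $T_{n-1}$ is visibly the $\integers^{n-1}$ acting on the remaining factors. Once $T_{n-1}\cong\integers^{n-1}$ is established, it is a torsion-free group acting freely and cocompactly on $\mathbf H$, hence an $(n-1)$-dimensional crystallographic group, exactly as the induction in Theorem~\ref{thm:boundaryoctahedron} requires.
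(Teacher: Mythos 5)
Your handling of the finite-index claim and of cocompactness matches the paper: the paper observes that $T_{n-1}=\ker\left(\psi|_{G_{n-1}}\right)$, so $G_{n-1}/T_{n-1}$ embeds in the finite group $P_G$, and then asserts cocompactness of $G_{n-1}$ on the convex subcomplex $\mathbf H$; your fundamental-domain/carrier argument is the correct justification of that assertion (and is needed, since stabilizers of arbitrary convex subcomplexes need not act cocompactly), spelled out in more detail than the paper gives. Your upper bound $a\leq n-1$ -- essentiality forces infinitely many distinct translates $tH$, so $T_{n-1}$ has infinite index in $T_G$ -- is likewise the paper's argument in expanded form.

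The genuine gap is exactly where you flagged it: the lower bound. Your primary route asserts that, after passing to the index-two side-preserving subgroup, the orbit $T_G\cdot H$ is pairwise non-crossing, but this is false in general, and the index-two reduction cannot repair it, because the obstruction is a crossing between $H$ and a translate $tH\neq H$, not a side-inversion of $H$ itself. Concretely, let $Y\subset\mathbf R_2$ be the diagonal staircase, the union of the squares $[i,i+1]^2$ and $[i+1,i+2]\times[i,i+1]$ for $i\in\integers$; this is a CAT(0) quasi-line, and the glide $g(x,y)=(y+1,x)$ is a cubical automorphism with $g^2$ the translation by $(1,1)$, generating a proper, cocompact $\integers$-action (so $g$ \emph{is} a translation in the $1$-dimensional crystallographic structure). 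One checks that $g$ carries the vertical hyperplane at $x=i+\frac{1}{2}$ to the horizontal hyperplane at $y=i+\frac{1}{2}$, and these intersect inside $[i,i+1]^2$, hence cross. Taking $\mathbf X=Y\times\mathbf R_1$ with $T_G=\integers^2=\langle g\rangle\times\langle s\rangle$ and $H$ the ($G$-essential) hyperplane lying over the vertical one, $gH$ crosses $H$, so your linear order on the orbit never gets started; your homogeneity remark only shows the crossing pattern is translation-invariant, not trivial, and you have not produced the deeper finite-index subgroup (here $\langle g^2,s\rangle$) that would restore nesting in general. The paper sidesteps all of this in one line: the stabilizer of a $G$-essential hyperplane is a \emph{codimension-one subgroup} of $T_G$, and a codimension-one subgroup of $\integers^n$ has rank exactly $n-1$, giving $a\geq n-1$ immediately. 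Your fallback via the rank-rigidity product decomposition of Remark~\ref{rem:alternative} is valid and non-circular (that remark does not depend on this lemma), but it imports precisely the Caprace--Sageev machinery that the paper's self-contained induction is designed to avoid; so as written, your proposal either has a hole at its central step or succeeds only by abandoning the direct approach.
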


\begin{proof}
By definition, the kernel of $\psi|_{G_{n-1}}$ is $T_{n-1}$, and hence $T_{n-1}$ has finite index in $G_{n-1}$.  This proves the first assertion.

$T_{_G}$ acts properly and cocompactly on $\mathbf X$, since $T_{_G}$ is a finite-index subgroup of $G$.  Since $H$ is $G$-essential and $T_{_G}$ has finite index, $H$ is $T_{_G}$-essential.  Now, with respect to the $T_{_G}$-action on $\mathbf X$, the stabilizer of $H$ is exactly $T_{n-1}$.  Indeed, $T_{n-1}$ stabilizes $H$ by definition; on the other hand, if $t\in T_{_G}-T_{n-1}$, then $t\not\in G_{n-1}=\stabilizer_{\mathbf X}(H)$.

Since $T_{_G}\cong\integers^n$, we have $T_{n-1}\cong\integers^k$ for some $k\leq n$.  Either $T_{n-1}$ is a codimension-1 subgroup or $T_{n-1}$ has finite index in $T_{_G}$.  Thus $k\geq n-1$.  Suppose that $T_{n-1}$ has finite index in $T_{_G}$.  Then there exists $N\in\integers$ such that for all $t\in T_{_G}$, we have $t^N\in T_{n-1}$, i.e. $T^N(\mathbf H)=\mathbf H$.  This implies that $\mathbf X$ lies in a uniform neighborhood of $\mathbf H$, contradicting $G$-essentiality of $\mathbf H$.  Hence $T_{n-1}$ has infinite index in $T_{_G}$, whence $k=n-1$.  This proves the second assertion.  Lastly, $\mathbf H$ is a convex subcomplex of $\mathbf X$, and thus $G_{n-1}$ acts on $\mathbf H$ cocompactly.  Since $T_{n-1}\leq_{f.i.}G_{n-1}$, the induced action of $T_{n-1}$ on $\mathbf H$ is cocompact.
\end{proof}

\begin{lem}\label{lem:induction1}
$\simp\mathbf X\cong\simp\mathbf H\star\mathbf Q_1$.
\end{lem}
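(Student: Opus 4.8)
We want to show $\simp\mathbf X\cong\simp\mathbf H\star\mathbf Q_1$, where $\mathbf H$ is the carrier of a $G$-essential hyperplane $H$ and $\mathbf Q_1$ is a pair of non-adjacent $0$-simplices. The plan is to produce two distinguished $0$-simplices of $\simp\mathbf X$ playing the role of the $\mathbf Q_1$-factor, identify $\simp\mathbf H$ as a subcomplex of $\simp\mathbf X$, and then show every simplex of $\simp\mathbf X$ is the join of something in $\simp\mathbf H$ with one of these two poles.

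**Producing the $\mathbf Q_1$ poles.** Since $H$ is $G$-essential and $T_G\cong\integers^n$ acts cocompactly, there is a translation $t\in T_G$ with $tH\cap H=\emptyset$ (as used in Lemma~\ref{lem:bnminus1}); iterating $t$ gives a combinatorial geodesic ray $\beta_+$ crossing $H,tH,t^2H,\dots$ and pointing ``away'' from $\mathbf H$ into $\mathfrak h(H)$, and $\beta_-=t^{-1}$-ray pointing into $\mathfrak h^*(H)$. First I would verify that $[\beta_+]$ and $[\beta_-]$ are minimal (each crosses a nested family $\{t^kH\}$, so Lemma~\ref{lem:minimalexists}'s mechanism applies) and that they are non-adjacent, i.e.\ no ray consumes both: any hyperplane crossing $\beta_+$ lies on the $\mathfrak h(H)$ side while one crossing $\beta_-$ lies on the $\mathfrak h^*(H)$ side, so they cannot simultaneously cross a single geodesic ray. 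This gives the embedded $\mathbf Q_1=\{v_+,v_-\}$.

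**Embedding $\simp\mathbf H$ and verifying the join.** The carrier $\mathbf H\cong H\times[-\tfrac12,\tfrac12]$ is convex, hence isometrically embedded, so its hyperplanes are exactly those of $\mathbf X$ crossing $\mathbf H$, namely $H$ itself together with the hyperplanes crossing $H$. A combinatorial geodesic ray in $\mathbf H$ is a combinatorial geodesic ray in $\mathbf X$, and convexity ensures almost-equivalence classes are preserved; this gives a simplicial embedding $\simp\mathbf H\hookrightarrow\simp\mathbf X$ whose image consists of classes $[\gamma]$ with $\mathcal W(\gamma)\subseteq\mathcal W(\mathbf H)$. The heart of the argument is the join decomposition: given any minimal $[\sigma]\leq[\gamma]$, I would show that $[\sigma]$ is either one of $v_\pm$ or lies in $\simp\mathbf H$, by examining whether $\mathcal W(\sigma)$ contains infinitely many $H$-parallel hyperplanes (the $t^kH$ family forces $[\sigma]=v_\pm$) or crosses $H$ essentially (placing $[\sigma]$ in $\simp\mathbf H$). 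To get the join structure I must check that $v_\pm$ is joinable to every simplex of $\simp\mathbf H$: given a ray $\delta$ in $\mathbf H$ and the pole ray $\beta_+$, the hyperplanes of $\mathbf H$ all cross $H$ and hence cross each $t^kH$, so Lemma~\ref{lem:raybuild2} supplies a ray $\sigma$ with $[\delta],[\beta_+]\leq[\sigma]$, witnessing the join edge.

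**The main obstacle.** The delicate step is proving \emph{completeness}: that there are no simplices of $\simp\mathbf X$ outside $\simp\mathbf H\star\mathbf Q_1$, equivalently that every minimal class is accounted for. The difficulty is controlling an arbitrary minimal class $[\sigma]$ whose dual hyperplanes need not obviously respect the splitting of $\mathbf X$ along $H$. I expect to exploit cocompactness of the $T_G$-action together with the fact (Lemma~\ref{lem:dimdefn}) that $\mathcal W(\sigma)$ decomposes into finitely many minimal pieces: a minimal class either crosses $H$ only finitely often, in which case it is consumed into $\mathbf H$ (after folding via Lemma~\ref{lem:folding} to a ray in $N(H)$), or crosses the $t$-translates of $H$ cofinally, forcing it to $v_\pm$. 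Checking that these are the only two possibilities — and that the flag/join combinatorics match $\mathbf Q_1\star\simp\mathbf H$ exactly, with no spurious adjacencies between $v_+$ and $v_-$ — is where the bulk of the care will go.
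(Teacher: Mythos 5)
Your overall architecture matches the paper's: two poles arising from a nested family of translates of $H$, the subcomplex $\simp\mathbf H$ as the ``equator,'' and a join decomposition. But there is a genuine gap at the step where you join the poles to $\simp\mathbf H$. You assert that ``the hyperplanes of $\mathbf H$ all cross $H$ and hence cross each $t^kH$,'' and this \emph{hence} is false: a hyperplane dual to a ray $\gamma\subset\mathbf H$ crosses $H$, but nothing forces it to extend far enough in the $t$-direction to cross the distant translates $t^kH$ --- hyperplanes crossing $H$ need not be invariant (even coarsely) under $t$, and in a general $T_G$-cocompact cube complex they can terminate before reaching $t^{k}H$. Without this, the hypothesis of Lemma~\ref{lem:raybuild2} (that \emph{every} hyperplane crossing one ray crosses every hyperplane crossing the other) fails, and you get no witness for the simplex $a\star v_+$. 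This is exactly the point where the paper does its hardest work: Lemma~\ref{lem:Ufinite} shows that the set $\mathcal U$ of hyperplanes crossing $\gamma$ but missing some $b^{p+1}(H)$ is \emph{finite} (using $T_{n-1}$-equivariance of the defect sets together with cocompactness and local finiteness), and Lemma~\ref{lem:newchoice} then runs a minimal-area disc diagram argument to replace $\gamma$ by an almost-equivalent ray $\gamma'$ with $\mathcal W(\gamma')\subseteq\mathcal W(\gamma)-\mathcal U$; only after these two lemmas does Lemma~\ref{lem:raybuild2} apply. Your proposal has no substitute for either lemma.

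Two smaller but real issues in the same vein. First, ``iterating $t$ gives a combinatorial geodesic ray $\beta_+$ crossing $H,tH,t^2H,\dots$'' is not automatic: you need the translates of $\mathfrak h(H)$ to be properly nested (the paper passes to a power so that $b$ is a hyperbolic isometry with $b(\mathfrak h(H))\subset\mathfrak h(H)$), and you need the K\"{o}nig's-lemma construction with initial points of the segments $\beta_p$ confined to a finite set by $T_{n-1}$-cocompactness, which is also what guarantees $\mathcal W(\beta_+)$ is \emph{exactly} $H$ together with the separators of the nested family --- the property from which minimality of $[\beta_+]$ actually follows (Lemma~\ref{lem:minimalexists} only produces \emph{some} minimal class below a given one; it does not certify that your ray is minimal). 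Second, in your completeness step, a ray crossing only finitely many $t^kH$ is not ``consumed into $\mathbf H$ after folding'' by Lemma~\ref{lem:folding} alone: you must know that the region between $H$ and $t^{\pi+1}H$ lies in a \emph{uniform} neighborhood of $\mathbf H$, which the paper extracts from the $G$-equivariant quasi-isometry $\eta:\mathbf X\rightarrow\Euclidean^n$ (the translates $b^p\mathbf H$ track parallel affine hyperplanes at bounded Hausdorff distance), and then from the separately-proved fact that a ray staying $R$-close to $\mathbf H$ represents a simplex of $A$ --- itself established by the orientation/gate construction in the paper's proof of $E\cap E^*=A$, another step your outline does not supply.
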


\begin{proof}
The proof has two parts: we first decompose $\simp\mathbf X$ along a subcomplex isomorphic to $\simp H$, and then show that each of the pieces is isomorphic to the join of $\simp H$ with a single 0-simplex.  For use in the latter part of the proof, denote by $\eta:\mathbf X\rightarrow\Euclidean^n$ a $G$-equivariant $(\lambda,\mu)$-quasi-isometry that is $\kappa$-quasi-surjective, for some $\lambda\geq 1,\,\mu,\kappa\geq 0$.

\textbf{Decomposing $\simp\mathbf X$ along $\simp\mathbf H$:}  Since $\mathbf H$ is a convex subcomplex of $\mathbf X$, there is a subcomplex $A\subset\simp\mathbf X$, isomorphic to $\simp\mathbf H$, that consists of those simplices represented by rays in $\mathbf H$ (see Theorem~3.14 of~\cite{HagenBoundary}).  Moreover, since $\mathbf C=\mathfrak h(H)\cup\mathbf H$ and $\mathfrak h^*(H)\cup\mathbf H=\mathbf C^*$ are convex subcomplexes of $\mathbf X$, the same theorem implies that there are subcomplexes $E,E^*$ of $\simp\mathbf X$ such that $E\cong\simp\mathbf C$ consists of simplices with representative rays in $\mathbf C$ and $E^*\cong\simp\mathbf C^*$ consists of simplices with representative rays in $\mathbf C^*$.  (We can always define $A$ to be the subcomplex consisting of all simplices represented by rays in $\mathbf H$, but we need convexity to ensure that $\mathbf H$ has a well-defined simplicial boundary.)

Now, $\simp\mathbf X=E\cup E^*$ and $E\cap E^*=A$.  Indeed, let $\gamma:[0,\infty)\rightarrow\mathbf X$ be a combinatorial geodesic ray.  Since $\gamma$ contains at most one 1-cube dual to $H$, all but finitely many 0-cubes of $\gamma$ lie in $\mathbf C$, or all but finitely many 0-cubes of $\gamma$ lie in $\mathbf C^*$, since $\gamma$ contains a sub-ray that lies entirely in one of the halfspaces associated to $H$.  Hence the simplex of $\simp\mathbf X$ represented by $\gamma$ belongs to $E$ or $E^*$, whence $\simp\mathbf X=E\cup E^*$.

On the other hand, suppose that $\gamma$ represents a simplex $v$ of $E\cap E^*$.  By definition, there exist combinatorial geodesic rays $\alpha$ and $\alpha^*$ such that $[\alpha]=[\alpha^*]$ represents $v$ and $\alpha\subset\mathbf C$ and $\alpha^*\subset\mathbf C^*$.  Without loss of generality, $\mathcal W(\alpha)=\mathcal W(\alpha^*)$ and $H$ separates $\alpha$ from $\alpha^*$.  Let $U$ be a hyperplane that crosses $\alpha$.  Since $H$ separates $\alpha$ from $\alpha^*$ and $U$ is dual to a 1-cube of $\alpha^*$, it follows that $U$ crosses $H$.  Let $\mathcal V$ be the finite set of hyperplanes that separate $\alpha(0)$ from $\mathbf H$.  If $V\in\mathcal V$, then $V$ cannot be dual to a 1-cube of $\alpha$, since $V$ does not cross $H$, and hence $V$ separates $\alpha$ from $\mathbf H$.  This forces $V$ to cross each $U\in\mathcal W(\alpha)$. For each integer $t\geq 0$, define an orientation $x_t$ of the hyperplanes of $\mathbf X$ as follows.  First, if $W$ is a hyperplane that does not belong to $\mathcal W(\alpha)$ or to $\mathcal V$, let $x_t(W)=\alpha(0)(W)$ be the halfspace containing $\alpha(0)$.  If $W\in\mathcal W(\alpha)$, let $x_t(W)=\alpha(t)(W)$ be the halfspace containing the 0-cube $\alpha(t)$.  If $W\in\mathcal V$, let $x_t(W)$ be the halfspace containing $\mathbf H$, i.e. the halfspace \emph{not} containing $\alpha(t)$.  Equivalently, $x_t$ orients each $W\in\mathcal V$ toward the unique 0-cube of $\mathbf H$ that is closest to $\alpha(0)$ (this 0-cube exists because convex subgraphs of median graphs are \emph{gated}~\cite{Chepoi2000}).

Since the consistent orientation $W\mapsto\alpha(t)(W)$ of all hyperplanes differs from $x_t$ only on $\mathcal V$, it suffices to check that $x_t(W)\cap x_t(W')\neq\emptyset$ whenever $W\in\mathcal V$.  This is guaranteed when $W'\in\mathcal W(\alpha)$, since $W$ and $W'$ cross in that case.  If $W'\in\mathcal V$, then $x_t$ orients $W$ and $W'$ toward the gate $x_0$ of $\alpha(0)$ in $\mathbf H$, and thus $x_t(W)\cap x_t(W')\neq\emptyset$.  Finally, if $W'$ does not belong to $\mathcal V$, then $\alpha(0)$ lies in the same halfspace associated to $W'$ as does $x_0$, so that $x_t(W')=x_0(W')$.  Either $W$ and $W'$ cross, or $W\subset x_0(W')$, since $x_0(W')$ contains any geodesic segment joining $x_0$ to $\alpha(0)$, by convexity of halfspaces.  Thus $x_t$ orients $W'$ toward $W$, so that $x_t(W)\cap x_t(W')\neq\emptyset$.  Thus $x_t$ is a consistent orientation of all hyperplanes.  Furthermore, $x_t$ differs from $x_0$, and hence from $\alpha(0)$, on finitely many hyperplanes, and thus $x_t$, being
consistent and canonical, is a 0-cube of $\mathbf X$.  By construction, $x_t\in\mathbf H$ for all $t$.

For all $t\geq 0$, the 0-cubes $x_t$ and $x_{t+1}$ are adjacent, since the corresponding orientations differ only on the hyperplane $W_t$ separating $\alpha(t)$ from $\alpha(t+1)$.  Moreover, if the hyperplane $W$ separates $x_s$ from $x_{s+1}$ and $x_t$ from $x_{t+1}$ for some $s,t\geq 1$, then $s=t$, since $\alpha$ is a geodesic ray.  Hence the map $\beta(t)=x_t$ determines a combinatorial geodesic ray $\beta:[0,\infty)\rightarrow\mathbf H$ such that $\mathcal W(\beta)=\mathcal W(\alpha)$.  Hence $[\beta]=[\alpha]=[\gamma]$, whence $v\subset A$.  This proves that $E\cap E^*\subseteq A$, and also establishes the following fact, which we shall use later in the proof:
\begin{center}
\emph{Let $\gamma:[0,\infty)\rightarrow\mathbf X$ be a combinatorial geodesic ray, and suppose there exists $R<\infty$ such that $d_{\mathbf X}(\gamma(t),\mathbf H)\leq R$ for all $t\geq 0$.  Then the simplex of $\simp\mathbf X$ represented by $\gamma$ lies in $A$.}
\end{center}
Indeed, if $\gamma$ lies in the $R$-neighborhood of $\mathbf H$, then all but finitely many hyperplanes crossing $\gamma$ also cross $H$, and we may argue as before to produce a ray in $\mathbf H$ that is almost-equivalent to $\gamma$: by removing a finite initial segment from $\gamma$ if necessary, we can assume that $\mathcal W(\gamma)\subseteq\mathcal W(\mathbf H)$ and thus that every hyperplane that separates some $\gamma(t)$ from $\mathbf H$ also separates $\gamma(0)$ from $\mathbf H$ and hence crosses each hyperplane in $\mathcal W(\gamma)$.  See Figure~\ref{fig:fellowtravel}.
\begin{figure}
\begin{overpic}[width=0.5\textwidth]{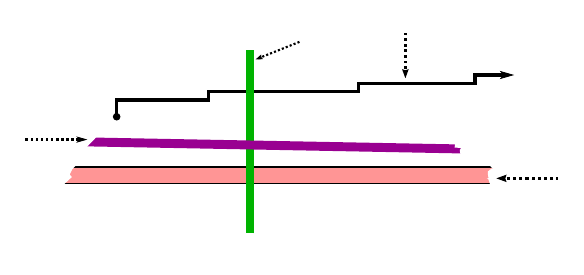}
\put(97,15){$\mathbf H$}
\put(68,43){$\gamma$}
\put(0,22){$A$}
\put(52,40){$B$}
\end{overpic}
\caption{A heuristic picture of the carrier $\mathbf H$ and the ray $\gamma$. Each hyperplane crossing $\gamma$ crosses $\mathbf H$ (for example, the hyperplane labeled $B$), and finitely many hyperplanes, like $A$, separate $\mathbf H$ from $\gamma$.}\label{fig:fellowtravel}
\vspace{-10pt}
\end{figure}

Conversely, if $\gamma:[0,\infty)\rightarrow\mathbf H$ is a combinatorial geodesic ray, then $\gamma\subset\mathbf C\cap\mathbf C^*$, so that $v\subset E\cap E^*$ by definition.

\textbf{The additional simplices:}  To complete the proof of the lemma, it therefore suffices to show that there exists a 0-simplex $v\in\simp\mathbf X$ such that $E=A\star v$, and a 0-simplex $v^*\neq v$ such that $E^*=A\star v^*$.  From this, the above discussion shows that $\simp\mathbf X=A\star(v\sqcup v')\cong\simp\mathbf H\star\mathbf Q_1$.

\textbf{The $T_{n-1}$-invariant hyperplane $X\subset\Euclidean^n$:}
Let $t_1,\ldots,t_{n-1}$ be a set of linearly independent translations that generate $T_{n-1}$, and let $h_0\in\mathbf H$.  Let $X\cong\reals^{n-1}$ be the $T_{n-1}$-invariant affine subspace of $\Euclidean^n$ containing the points $\eta(h_0),t_1(\eta(h_0)),\ldots,t_{n-1}(\eta(h_0))$.

There exists $S<\infty$ such that $X\subseteq N_S(\eta(\mathbf H))$ and $\eta(\mathbf H)\subseteq N_S(X)$.  Indeed, there exists $r_1\geq 0$ such that $d_{\mathbf X}(h,T_{n-1}(h_0))\leq r_1$ for all $h\in\mathbf H$, since $T_{n-1}$ acts cocompactly on $\mathbf H$.  Let $h\in\mathbf H$ and choose $t\in T_{n-1}$ such that $d_{\mathbf X}(h,t(h_0))\leq r_1$.  Since $t(h_0)\in X$, the distance in $\Euclidean^n$ from $\eta(h)$ to $X$ is at most
\[\|\eta(h)-\eta(th_0)\|\leq\lambda r_1+\mu.\]
Similarly, if $x\in X$, then the distance in $\Euclidean^n$ from $x$ to $\eta(\mathbf H)$ is at most
\[\|t\eta(h_0)-x\|\leq r_2,\]
where $r_2^2\geq\sum_{i=1}^{n-1}\|t_i\|^2$ and $t\in T_{n-1}$ is chosen, using the cocompactness of the $T_{n-1}$-action on $X$, to satisfy the preceding inequality.  Hence $S=\max\{\lambda r_1+\mu,r_2\}$ suffices.

Now let $t\in T_{_G}$ be a translation.  Then there exists $R<\infty$ such that each of $\mathbf H$ and $t(\mathbf H)$ lies in the uniform $R$-neighborhood in $\mathbf X$ of the other.  Indeed, since $\eta(\mathbf H)$ and $X$ lie in uniform $S$-neighborhoods of one another in $\Euclidean^n$, and $\eta$ is a quasi-isometry, it suffices to exhibit $R'<\infty$ such that $X$ and $t(X)$ lie in uniform $R'$-neighborhoods of one another.  Having shown that such an $R'$ exists, it is evident that $R=\lambda(R'+2S+\mu)$ suffices.  But since $t\in T_{_G}$, it is obvious that $X$ and $t(X)$ are parallel codimension-1 hyperplanes in $\Euclidean^n$, and so $R'\leq\|t(0)\|$.

\textbf{The translation $b$:}  Let $b\in T_{_G}-T_{n-1}$ be a translation chosen so that for all $x,x'\in X$, we have $\|b(x)-x'\|>2S.$  For example, let $b'$ be a translation along the unit normal vector to $X$ and let $b$ be some high power of $b'$.  If $\mathbf H\cap b(\mathbf H)\neq\emptyset$, then there exist $h,h'\in\mathbf H$ such that $h=t(h')$ and thus $\eta(h)=t\eta(h')$.  Now $b\eta(h')$ lies at distance at most $S$ from $b(X)$, and $\eta(h)$ lies at distance at most $S$ from $X$, so that $\|b(x)-x'\|\leq 2S$, a contradiction.  Thus $b(\mathbf H)$, the carrier of $b(H)$, is disjoint from $\mathbf H$ and lies in $\mathfrak h(H)$ or $\mathfrak h^*(H)$.  We can assume the former, by replacing $b$ with $b^{-1}$ if necessary.  Moreover, there exists $Q<\infty$ such that for all $h\in\mathbf H$, there exists $h',h''\in\mathbf H$ with $d_{\mathbf X}(h,t(h'))\leq Q$ and $d_{\mathbf X}(t(h),h'')\leq Q$.

Hence $\{b^p(H)\}_{p\geq 0}$ is an infinite collection of hyperplanes in $\mathbf C$ whose carriers are pairwise disjoint, such that $b^p(\mathbf H)$ lies in the $Q$-neighborhood of $b^{p+1}(\mathbf H)$ for all $p\geq 0$, and vice versa.  Moreover, by passing to a high power if necessary, we can assume that $b$ is a hyperbolic isometry of $\mathbf X$, and thus that $b(\mathfrak h(H))\subset\mathfrak h(H)$.  Therefore, $\{b^p(\mathfrak h(H))\}_{p\geq 0}$ is totally ordered by inclusion: $b^{p+1}(\mathfrak h(H))\subset b^{p}(\mathfrak h(H))$ for all $p\geq 0$, so that, if $p<q<r$, then $b^q(H)$ separates $b^p(H)$ and $b^r(H)$.  See Figure~\ref{fig:nested}.

\begin{figure}
  \begin{overpic}[width=0.2\textwidth]{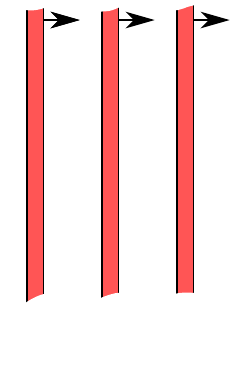}
   \put(5,10){$\mathbf H$}
   \put(25,10){$b(\mathbf H)$}
   \put(45,10){$b^2(\mathbf H)$}
  \end{overpic}

  \caption{The carrier $\mathbf H$ and some of its $\langle b\rangle$-translates.  The arrow on each translate of $\mathbf H$ points into the corresponding translate of $\mathfrak h(H)$.}\label{fig:nested}
  \vspace{-10pt}
\end{figure}

\textbf{Intersections of halfspaces:}  Let $p<q$ be integers, and consider the smallest subcomplex $\mathbf Y$ of $\mathbf X$ containing the region $b^p(\mathfrak h(H))\cap b^q(\mathfrak h^*(H))$ ``between'' $b^p(H)$ and $b^q(H)$.  Since $\eta$ is a quasi-isometry sending $b^p(H)$ and $b^q(H)$ to parallel hyperplanes in $\Euclidean^n$, we see that $\mathbf Y$ is contained in the regular $Q(q-p)$-neighborhood of $\mathbf H$.  This fact plays a role below.

\textbf{The 0-simplex $v$:}  For each $p\geq 0$, let $\beta_p$ be a combinatorial geodesic segment joining some $a_p\in\mathbf H^0\cap\mathfrak h^*(H)$ to some $c_p\in b^p(\mathbf H^0)\cap b^p\mathfrak h^*(H)$.  Let $a_p,c_p$ and $\beta_p$ be chosen so that $\beta_p$ is a short as possible.  Then the set of hyperplanes dual to 1-cubes of $\beta_p$ consists of $H$, together with those hyperplanes that separate $H$ from $b^p(H)$, because $H$ is convex.  In particular, $\beta_p$ contains a 1-cube dual to $b^q(H)$ if and only if $0\leq q<p$.

Now, since $T_{n-1}$ acts cocompactly on $\mathbf H$, there exists a finite set $\mathcal F$ of $0$-cubes in $\mathbf H$ such that, for all $p\geq 0$, we can choose $\beta_p$ in such a way that $\beta_p(0)\in\mathcal F$. Hence, by K\"{o}nig's lemma, there exists a combinatorial geodesic ray $\beta:[0,\infty)\rightarrow\mathbf C$ such that the initial 1-cube of $\beta$ is dual to $H$, and $\beta$ contains a 1-cube dual to $b^p(H)$ for all $p\geq 0$, and the hyperplane $U$ crosses $\beta$ if and only if $U=H$ or $U$ separates two elements of $\{b^p(H)\}_{p\geq 0}$.  The latter property implies that $[\beta]$ is minimal, and hence the simplex $v$ of $E$ represented by $\beta$ is a 0-simplex.

Since $\beta$ contains 1-cubes dual to infinitely many hyperplanes that do not cross $H$, $v\in E-A$.  See Figure~\ref{fig:betabuild}.  Note also that $bv=v$, since $\mathcal W(\beta)$ consists of the $b$-almost-invariant set $\{b^p(H)\}_{p\geq 0}$ together with any hyperplane separating two elements of that set.  In particular, no hyperplane crossing $H$ can cross $\beta$.  However, $\beta$ itself need not lie on a combinatorial geodesic axis for $b$.

\begin{figure}
  \begin{overpic}[width=0.4\textwidth]{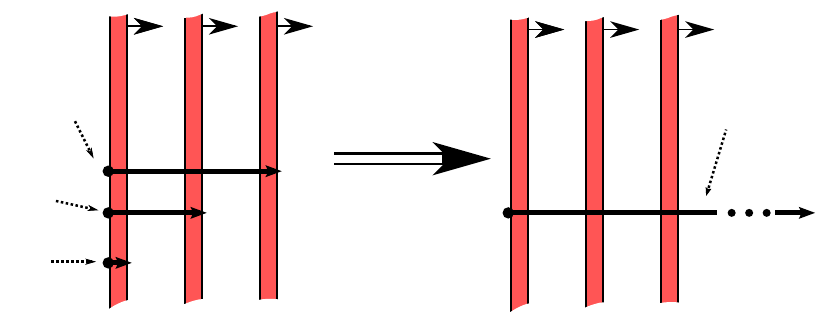}
    \put(2,7){$\beta_1$}
    \put(2,15){$\beta_2$}
    \put(6,27){$\beta_3$}
    \put(89,26){$\beta$}
  \end{overpic}
  \caption{At left are some of the segments $\beta_p$, shown in bold.  As at right, cocompactness of the $T_{n-1}$-action on $\mathbf H$ guarantees that there is a geodesic ray $\beta$ containing some $\beta_p$ for all $p\geq 0$.}\label{fig:betabuild}
  \vspace{-10pt}
\end{figure}

\textbf{Proof that $E\subseteq A\star v$:}  Let $\gamma:[0,\infty)\rightarrow\mathbf C$ be a combinatorial geodesic ray with $\gamma(0)=\beta(0)\in\mathbf H$.  Denote by $u$ the simplex of $E$ represented by $\gamma$.  We must verify that $u\subseteq A\star v$.  Now either $u\subset A$, or $b^p(H)$ crosses $\gamma$ for all sufficiently large $p\geq 0$.  Indeed, either $b^p(H)$ crosses $\gamma$ for all $p\geq 0$, or there exists $\pi\geq 0$ such that $b^{\pi+1}(H)$ does not cross $\gamma$.  In the latter case, $\gamma\subset\mathbf C\cap b^{\pi+1}(\mathfrak h^*(H))$, i.e. $\gamma$ lies ``between'' $\mathbf H$ and $b^{\pi+1}(\mathbf H)$.  It was shown above that the smallest subcomplex containing $\mathfrak h(H)\cap b^{\pi+1}(\mathfrak h^*(H))$ lies in $N_{Q(\pi+1)}(\mathbf H)$.  Hence $\gamma\subset N_{Q(\pi+1)}(\mathbf H)$ and it was shown above that this implies that $u$ is a simplex of $A$ and thus lies in $A\star v$.  See Figure~\ref{fig:liesbetween}.
\begin{figure}[h]
  \includegraphics[width=0.12\textwidth]{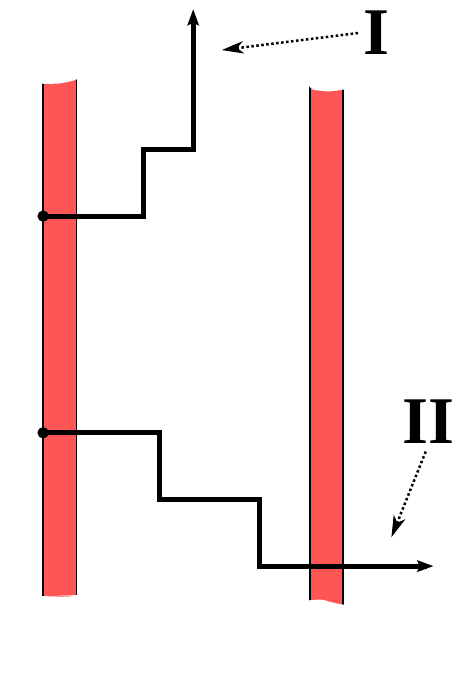}\\
  \caption{The ray $\mathbf I$ crosses $\mathbf H$ but fails to cross all but finitely many $\langle b\rangle$-translates of $H$, and hence lies in a uniform neighborhood of $\mathbf H$.  The ray labeled $\mathbf{II}$ could cross all $b^p(H)$.}\label{fig:liesbetween}
  \vspace{-5pt}
\end{figure}

Hence suppose that $b^p(H)$ crosses $\gamma$ for all $p\geq 0$.  Then the set $\mathcal W(\gamma)\cap\mathcal W(\beta)$ is infinite.  By Lemma~\ref{lem:raybuild}, there exists a geodesic ray $\sigma$ such that $[\sigma]\leq[\beta]$ and $[\sigma]\leq[\gamma]$.  Since $[\beta]$ is minimal, $[\sigma]=[\beta]$ and thus $[\beta]\leq[\gamma]$, i.e. $v\subseteq u$.  In particular, if $u$ is a 0-simplex, then $u=v$.  This shows that $v$ is the only 0-simplex of $E-A$, whence $E$ is isomorphic to a subcomplex of $A\star v$.

\textbf{Proof that $A\star v\subseteq E$:}  Let $a$ be a (visible) simplex of $A$.  First, using Lemma~\ref{lem:folding}, choose a combinatorial geodesic ray $\gamma:[0,\infty)\rightarrow\mathbf H$, representing $a$, so that $\gamma(0)=\beta(0)$.  Next, since $\mathbf H\cong[-\frac{1}{2},\frac{1}{2}]\times H$, we can modify $\gamma$ within its almost-equivalence class, without changing $\gamma(0)$, so that $\gamma\subset\mathfrak h^*(H)$.  Indeed, if $H_-\cong\{-\frac{1}{2}\}\times H$ is the copy of $H$ on the $\mathfrak h^*(H)$ side of $H$, we can if necessary project $\gamma$ to $H_-$.

Consider the combinatorial geodesic ray $\gamma'_p=b^p(\gamma)$.  Let $x_p$ be the initial 0-cube of the 1-cube of $\beta$ dual to $b^p(H)$, and let $\gamma_p:[0,\infty)\rightarrow b^p(\mathbf H)$ be a combinatorial geodesic ray such that $[\gamma_p]=[\gamma]$ and $\gamma_p(0)=x_p$.  For example, $\gamma_p$ can be produced by applying Lemma~\ref{lem:folding} to $b^p(\gamma)$ and the sub-ray of $\beta$ beginning at $x_p$.  It is easily seen that $\gamma$ and $b^p(\gamma)$ fellow-travel, since $b$ acts as a translation on $\Euclidean^n$.

Let $\mathcal U_p\subset\mathcal W(\gamma)$ be the set of hyperplanes $U$ that cross $\gamma$ and do not cross $b^{p+1}(H)$, and let $\mathcal U=\bigcup_{p\geq 0}\mathcal U_p$.  By Lemma~\ref{lem:Ufinite}, $|\mathcal U|<\infty$ and thus, by Lemma~\ref{lem:newchoice}, we have rays $\beta',\gamma'$ such that $[\beta']=[\beta],[\gamma']=[\gamma]$, and $\gamma'(0)=\beta'(0)$, and $\mathcal W(\gamma')\subseteq\mathcal W(\gamma)-\mathcal U$.  Now, if $U\in\mathcal W(\gamma')$, then $U$ crosses each $b^p(H)$, and hence $U$ crosses each $V\in\mathcal W(\beta')$.  By Lemma~\ref{lem:raybuild2}, there exists a geodesic ray $\sigma$ such that $[\beta]\leq[\sigma]$ and $[\gamma]\leq[\sigma]$.  Hence $a$ and $v$ are both contained in the simplex $u$ of $E$ represented by $\sigma$, whence there is a simplex $a\star v\subseteq E$.  Hence $A\star v\subseteq E$.

\textbf{Conclusion:}  We have shown that $E=A\star v$.  Arguing in the same way in $\mathbf C^*$ shows that there is a 0-simplex $v^*$ of $E^*-A$ such that $E^*=A\star v^*$.  Hence $\simp\mathbf X=E\cup E^*\cong A\star(v\sqcup v^*)$ and the proof is complete.
\end{proof}

\begin{lem}\label{lem:Ufinite}
$\mathcal U$ is finite.
\end{lem}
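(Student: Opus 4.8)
The plan is to reduce the statement to a uniform bound on how far each hyperplane of $\mathcal W(\gamma)$ reaches across the translates $b^q(H)$, and then to extract that bound from cocompactness. For a hyperplane $U$, set $I(U)=\{q\in\integers\,:\,U\text{ crosses }b^q(H)\}$ and write $m_U=\sup I(U)$, $\ell_U=\inf I(U)$. Two observations organize everything. First, since $\gamma\subset\mathbf H\cap\mathfrak h^*(H)$, any $U\in\mathcal W(\gamma)$ crosses $\mathbf H$ and differs from $H$, hence crosses $H=b^0(H)$; so $0\in I(U)$. Second, because $b^q(H)$ separates $b^p(H)$ from $b^r(H)$ whenever $p<q<r$, and a hyperplane crossing two hyperplanes must cross any hyperplane separating them, $I(U)$ is an interval of integers containing $0$. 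Thus $U\in\mathcal U$ precisely when $m_U<\infty$, and $\mathcal U=\bigcup_p\mathcal U_p$ with $\mathcal U_p=\{U\in\mathcal W(\gamma)\,:\,m_U\le p\}$.

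Next I would record the containment $\mathcal U_p\subseteq\mathcal W(\gamma)-\mathcal W(b^{p+1}\gamma)$. Indeed $b^{p+1}\gamma\subset b^{p+1}(\mathbf H)\cap b^{p+1}\mathfrak h^*(H)$, so any hyperplane crossing $b^{p+1}\gamma$ crosses $b^{p+1}(\mathbf H)$ and differs from $b^{p+1}(H)$, hence crosses $b^{p+1}(H)$; contrapositively, no $U\in\mathcal U_p$ crosses $b^{p+1}\gamma$. Since $\gamma$ and $b^{p+1}\gamma$ fellow-travel (as $b$ acts as a Euclidean translation), they are almost-equivalent, so $\mathcal W(\gamma)-\mathcal W(b^{p+1}\gamma)$ is finite. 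It therefore suffices to produce a single integer $M$ with $m_U\le M$ for all $U\in\mathcal U$: then $\mathcal U=\mathcal U_M\subseteq\mathcal W(\gamma)-\mathcal W(b^{M+1}\gamma)$ is finite.

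The remaining and main step is to bound $m_U$, and this is where cocompactness enters. Let $L=T_{n-1}\times\langle b\rangle$, a finite-index subgroup of $T_{_G}$ acting cocompactly on $\mathbf X$. Since every $t\in T_{n-1}$ fixes each $b^q(H)$ and $b$ shifts the family by one, one checks $I(gU)=I(U)+j$ for $g=tb^j\in L$, so the width $|I(U)|$ is an $L$-invariant; cocompactness then yields finitely many $L$-orbits of hyperplanes, hence finitely many finite widths, of which I let $W_{\max}$ be the largest. I first claim no $U\in\mathcal U$ has infinite width: if $\ell_U=-\infty$ while $m_U<\infty$, then the distinct hyperplanes $b^j(U)$ with $j\ge -m_U$ all cross $\mathbf H$, so by cocompactness of $T_{n-1}$ on $\mathbf H$ two of them, $b^{j_1}(U)$ and $b^{j_2}(U)$ with $j_1<j_2$, lie in one $T_{n-1}$-orbit; writing $b^{j_2}(U)=t\,b^{j_1}(U)$ and using that $b,t$ commute makes $U$ invariant under $g=t^{-1}b^{j_2-j_1}\in L$, forcing $I(U)=I(U)+(j_2-j_1)$ and hence $I(U)=\integers$, contradicting $m_U<\infty$. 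Thus each $U\in\mathcal U$ has finite width $\le W_{\max}$, and since $\ell_U\le 0\le m_U$ we obtain $m_U\le W_{\max}=:M$, which completes the argument by the reduction above.

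The hard part is exactly this last paragraph: converting ``$U$ reaches only finitely far across the $b^q(H)$'' into a bound uniform over $\mathcal U$. The delicate point is ruling out hyperplanes with $\ell_U=-\infty$ but $m_U<\infty$, since these would let $m_U$ grow without bound along $\mathcal U$; they are eliminated only by combining the shift-behaviour $I(gU)=I(U)+j$ with the pigeonhole consequence of cocompactness. Everything else is formal once the interval structure of $I(U)$ and the fellow-traveling of $\gamma$ with its $b$-translates are in place.
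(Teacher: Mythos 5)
Your proof is correct, but it is organized quite differently from the paper's. The paper shows each $\mathcal U_p$ is $T_{n-1}$-invariant and finite (finiteness via the containment $\gamma\subset N_{pQ}(b^p(\mathbf H))$), then assumes $\mathcal U$ is infinite, picks for each $p$ a witness $U^p$ crossing $b^p(H)$ but not $b^{p+1}(H)$, translates each witness by an element of $T_{n-1}$ so that its dual $1$-cube lands in a fixed ball around $\gamma(0)$, and contradicts local finiteness of $\mathbf X$, since the translated witnesses remain pairwise $T_{n-1}$-distinct (the ``level'' $p$ being a $T_{n-1}$-invariant). You instead prove a uniform bound on the level: after establishing the interval structure of $I(U)$ with $0\in I(U)$ (so $\mathcal U=\{U\in\mathcal W(\gamma)\,:\,m_U<\infty\}$ and $\mathcal U_p=\{U\,:\,m_U\le p\}$), you pass to the rank-$n$ lattice $L=T_{n-1}\times\langle b\rangle$ (correctly finite-index in $T_G$, since $\vec b$ is not in the span of the $T_{n-1}$-lattice by the choice of $b$), use $I(gU)=I(U)+j$ to make the width an $L$-orbit invariant, rule out one-sided-infinite intervals by a pigeonhole among the translates $b^j(U)$ crossing $\mathbf H$, and conclude $\mathcal U=\mathcal U_M$ with $M$ the largest finite orbit width. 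Your finiteness of $\mathcal U_p$ also runs through a different mechanism than the paper's: almost-equivalence of $\gamma$ and $b^{p+1}\gamma$ (the fellow-traveling computation is exactly that of Lemma~\ref{lem:translationsfixboundary}, which appears later in the paper but is independent of this lemma, and the paper itself notes the fellow-traveling in the proof of Lemma~\ref{lem:induction1}, so there is no circularity). The paper's route is shorter and leans directly on local finiteness near a point; yours buys a sharper quantitative conclusion (an explicit uniform bound $M$ with $\mathcal U=\mathcal U_M$) at the cost of the extra step eliminating hyperplanes with $\ell_U=-\infty$ but $m_U<\infty$, a case the paper's per-level witness argument never has to confront. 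Both proofs ultimately rest on the same two ingredients: commutation of $T_{n-1}$ with $b$ (making crossing patterns equivariant) and a cocompactness pigeonhole.
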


\begin{proof}
For all $U_i\in\mathcal U_p$ and all $t\in T_{n-1}$, the hyperplane $tU_i$ crosses $tH=H$, since $U_i$ crosses $H$ by virtue of being dual to a 1-cube of $\gamma$.  On the other hand, $tU_i$ does not cross $tb^p(H)=b^p(H)$, since $U_i$ does not cross $b^p(H)$.  Thus $tU_i\in\mathcal U_p$ for all $t\in T_{n-1}$.

Since $\gamma\subset N_{pQ}(b^p(\mathbf H))$ for each $p\geq 0$, we have that $|\mathcal U_p|<\infty$ for all $p\geq 0$.  Therefore, if $\mathcal U$ is infinite, then for all $p\geq 0$, there is a hyperplane $U^p$ dual to a 1-cube of $\gamma$ that crosses $b^p(H)$ and does not cross $b^{p+1}(H)$.  By cocompactness of the $T_{n-1}$-action on $\mathbf H$, there is a translation $t_p\in T_{n-1}$ such that $t_pU^p$ is dual to a 1-cube $c_p$ that lies within some fixed distance $f$ of $\gamma(0)$.  But then the $f$-neighborhood of $\gamma(0)$ in $\mathbf H$ contains 1-cubes dual to infinitely many $T_{n-1}$-distinct hyperplanes, contradicting local finiteness of $\mathbf X$.  Hence $\mathcal U$ is finite, and there exists $\pi\geq 0$ such that $\mathcal U_p=\mathcal U_{\pi}$ for all $p\geq\pi$.
\end{proof}

In the next lemma, we use disc diagrams in $\mathbf X$.  Discussions of minimal-area diagrams in the same language as is used here can be found in~\cite{HagenQuasiArb} and~\cite{WiseIsraelHierarchy}.  The lemma is stated in more generality than required in the present context, but it is easy to verify that the hypotheses are satisfied by $\mathbf X,\mathbf H,\beta,\gamma$ from the proof of Lemma~\ref{lem:induction1}.

\begin{lem}\label{lem:newchoice}
Let $\mathbf X$ be a CAT(0) cube complex containing a hyperplane $H$, with carrier $\mathbf H$.  Suppose that there is a hyperbolic element $b\in\Aut(\mathbf X)$ such that $\langle b\rangle H$ is a pairwise-disjoint collection of hyperplanes.  For each $p\in\integers$, let $\mathfrak h(b^p(H))$ be the halfspace of $\mathbf X$ associated to $b^p(H)$ that contains $b^{p+1}(H)$ and let $\mathfrak h^*(b^p(H))$ be the complementary halfspace.

Let $\gamma\rightarrow\mathbf H$ be a combinatorial geodesic ray in $\mathfrak h^*(H)$, and let $\beta\rightarrow\mathbf X$ be a combinatorial geodesic ray such that $\beta(0)=\gamma(0)$ and such that the hyperplane $b^p(H)$ crosses $\beta$ for $p\geq 0$, and the hyperplane $U$ crosses $\beta$ only if $U=H$ or $U$ separates two elements of $\langle b\rangle H$.

For each $p\geq 0$, let $\mathcal U_p$ be the set of hyperplanes that cross $\gamma$ but not $b^{p+1}(H)$, and let $\mathcal U=\cup_{p\geq 0}\mathcal U_p$.  Finally, suppose that there exists $\pi\geq 0$ such that $\mathcal U_p=\mathcal U_\pi$ for $p\geq \pi$.

There exist rays $\beta',\gamma'$ such that $[\beta']=[\beta],[\gamma']=[\gamma]$, and $\gamma'(0)=\beta'(0)$, and $\mathcal W(\gamma')\subseteq\mathcal W(\gamma)-\mathcal U$, and $\mathcal W(\beta')\subseteq\mathcal W(\beta)$.
\end{lem}
\begin{proof}
We shall modify $\gamma$ within its almost-equivalence class, without changing its initial 0-cube, to produce a ray $\gamma'$ with $\mathcal W(\gamma')=\mathcal W(\gamma)-\mathcal U$, and take $\beta'=\beta$, reaching a conclusion that is stronger than the statement of the lemma (which is all we need).  If $\mathcal U=\emptyset$, we are done.

First, choose any $p\geq\pi$ and choose $\delta$ sufficiently large that for all $U\in\mathcal U$, the 1-cube of $\gamma$ dual to $U$ is $\gamma([k,k+1])$, where $k\leq\delta-1$.  For any $q>\delta$, let $\widehat{\gamma}=\gamma([0,q])$ and let $\widehat{\beta}=\beta([0,p])$.  The hyperplane $V$ dual to the terminal 1-cube of $\widehat{\gamma}$ necessarily belongs to $\mathcal W(\gamma)-\mathcal U$, and therefore crosses $b^p(H)$.  Let $\widehat{\gamma_p}$ be a shortest geodesic segment in $b^p(\mathbf H)$ joining $x_p$ to a 0-cube of $b^p(\mathbf H)\cap N(V)$, and let $\omega$ be a shortest geodesic segment in $N(V)$ joining $\gamma(q)$ to the terminal 0-cube of $\widehat{\gamma_p}$.

Let $D\rightarrow\mathbf X$ be a minimal-area disc diagram bounded by $\widehat\gamma\omega(\widehat{\gamma_p})^{-1}(\widehat{\beta})^{-1}$, with $\omega$ chosen so as to minimize the area of $D$ among all such diagrams with $\widehat{\gamma},\widehat{\gamma_p},\widehat{\beta}$ fixed and $\omega$ allowed to vary.  Let $K$ be a dual curve in $D$ emanating from $\widehat{\gamma}$ and mapping to a hyperplane $U\in\mathcal U$.  Since $\gamma$ is a geodesic, $K$ cannot end on $\widehat{\gamma}$.  Since $U$ does not cross $b^p(H)$, $K$ cannot end on $\widehat{\gamma_p}$.  Since $U$ is not dual to a 1-cube of $\beta$ -- observe that $\mathcal W(\beta)\cap\mathcal W(\gamma)=\emptyset$ since no hyperplane crossing $H$ crosses $\beta$ -- the unique possibility is that $K$ ends on $\omega$, and hence $U$ crosses $V$.  Such a dual curve $K$ is shown in Figure~\ref{fig:discdiagram}.  Let $\zeta_q$ be the path in $D$ which travels along $\gamma$, starting at $\gamma(0)$, until reaching the initial 0-cube of the 1-cube dual to $K$, and then travels geodesically along a
path in the carrier of $K$, until reaching $\omega$, in such a way as never to cross $K$.  See Figure~\ref{fig:discdiagram}.
\begin{figure}[h]
\begin{overpic}[width=0.4\textwidth]{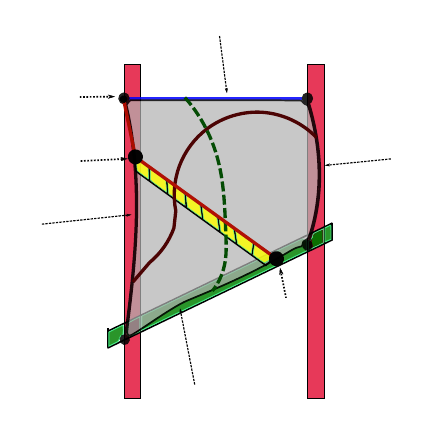}
\put(5,48){$\widehat\gamma$}
\put(89,63){$\widehat{\gamma_p}$}
\put(13,76){$A$}
\put(13,62){$B$}
\put(64,25){$C$}
\put(48,94){$\widehat{\beta}$}
\put(43,9){$\omega$}
\end{overpic}
\caption{The diagram $D$.  The rectangular ladder is the carrier of the dual curve $K$; the path $ABC$ is the path $\zeta_q$.  No dual curve emanating from $\omega$ can cross $K$, as the dashed one does; hence every dual curve crossing $\zeta_q$ ends, like the solid one, on $\widehat{\gamma}$.}\label{fig:discdiagram}
\end{figure}
Let $\gamma'_q$ be the image in $\mathbf X$ of $\zeta_q$ under the map $D\rightarrow\mathbf X$.  Any hyperplane crossing $\gamma'_q$ corresponds to a dual curve in $D$ that crosses $\zeta_q$.  Now, by minimality of the area of $D$, every dual curve crossing $\zeta_q$ crosses $\gamma$, and hence any two such dual curves map to distinct hyperplanes.  Thus $\gamma'_q$ is a geodesic segment.  Moreover, any dual curve in $D$ that travels from $\widehat{\gamma}$ to $\widehat{\gamma_p}$ is necessarily dual to a 1-cube of $\zeta_q$.  Thus every $W\in\mathcal W-\mathcal U$ crosses $\gamma'_q$ for all sufficiently large $q$.  Applying K\"{o}nig's lemma to the set of all $\gamma'_q$, as $q$ grows arbitrarily large, yields a geodesic ray $\gamma'$ such that $\gamma'(0)=\beta(0)$ and $\mathcal W(\gamma)-\mathcal U\subseteq\mathcal W(\gamma')\subseteq\mathcal W(\gamma)-\{U\}$.  Since $\mathcal U$ is finite, we reach the desired $\gamma'$ after finitely many repetitions of this argument.
\end{proof}

\subsection{Application to actions on $\mathbf R_n$}\label{sec:actionsonrn}
The next theorem is an application of Theorem~\ref{thm:product_action}, but is independent of the results in the next section.  It is proved by different means in~\cite{WiseIsraelHierarchy}.

\begin{thm}\label{thm:product_action}
For $n\geq 1$, let $G$ be a virtually $\integers^n$ group acting properly and cocompactly on a CAT(0) cube complex.  Then $G$ acts properly and cocompactly on $\mathbf R_n$.
\end{thm}

\begin{proof}
The proof has several steps; the first step could also be accomplished using the rank-rigidity theorem of~\cite{CapraceSageev}; here we describe a proof using a fact about the simplicial boundary.

\textbf{Decomposing $\mathbf X$ as a cubical product:} Without loss of generality, $G$ acts essentially on $\mathbf X$.  Hence Theorem~\ref{thm:boundaryoctahedron}, together with Theorem~3.30 of~\cite{HagenBoundary}, implies that $\mathbf X\cong \mathbf X_1\times\ldots\times\mathbf X_n$, where each $\mathbf X_i$ is a CAT(0) cube complex quasi-isometric to $\mathbf R_1$, and the inclusion $\mathbf X_i\hookrightarrow\mathbf X$ induces the inclusion of a copy of a factor $\mathbf Q_1\subset\simp\mathbf X\cong\mathbf Q_1\star\ldots\star\mathbf Q_1$.  Indeed, $\simp\mathbf X\cong\mathbf Q_n$ by Theorem~\ref{thm:boundaryoctahedron}; to apply~\cite[Theorem~3.30]{HagenBoundary} then requires only that each simplex of $\simp\mathbf X$ be visible.  However, the proof of Theorem~\ref{thm:boundaryoctahedron} shows that each 0-simplex of $\simp\mathbf X$ arises from a combinatorial geodesic ray, as required.

Regarding $\mathbf X$ as the above product, we choose a basepoint $x_i\in\mathbf X_i$ for each $i$.  Abusing notation, we shall refer to the subcomplex $\mathbf X_i\times\prod_{i\neq j}\{x_j\}\subset\mathbf X$ as $\mathbf X_i$.

\textbf{$\Aut(\mathbf X_i)$-invariant lines:}  For each $i\leq n$, let $G_i=\stabilizer_{\mathbf X}(\mathbf X_i)$.  Since $G_i$ acts essentially on $\mathbf X_i$ and $\mathbf X_i$ is quasi-isometric to $\mathbf R_1$, any three pairwise-disjoint hyperplanes of $\mathbf X_i$ have the property that one separates the other two.  This, together with the fact that $G_i$ acts properly and cocompactly on $\mathbf X_i$, allows us to invoke~\cite[Theorem~7.2]{CapraceSageev} and conclude that there is a (not necessarily combinatorial) CAT(0) geodesic line $\alpha_i\subseteq\mathbf X_i$ that is $G_i$-invariant (and in fact $\Aut(\mathbf X_i)$-invariant).

\textbf{$\Aut(\mathbf X)$-invariant $\mathbf R_n$:}  Without loss of generality, the basepoint was chosen in each factor so that for each $i$, we have $x_i\in\alpha_i$.  Thus $\mathbf X$ contains a flat $\prod_i\alpha_i$ that contains the point $x=(x_1,\ldots,x_n)$.  By~\cite[Proposition~2.6]{CapraceSageev}, $\Aut(\mathbf X)$ preserves the product decomposition of $\mathbf X$, possibly permuting isomorphic factors.  Since each $\alpha_i$ is $\Aut(\mathbf X_i)$-invariant, $\prod_i\alpha_i$ is $\Aut(\mathbf X)$-invariant.  Declare each point in the $\Aut(\mathbf X)$-invariant set $\prod_{i=1}^n\left(\Aut(\mathbf X_i)x\right)$ to be a 0-cell, with two 0-cells adjacent if and only if they belong to a common $\Aut(\mathbf X)$-translate of some $\alpha_i$.  The resulting $\Aut(\mathbf X)$-invariant graph is easily seen to be the 1-skeleton of $\mathbf R_n$.  We thus have a $G$-invariant (not necessarily combinatorial) embedded copy of $\mathbf R_n$ in $\mathbf X$.  Since $G$ acts properly and cocompactly on $\mathbf X$, the action on $\mathbf R_n$ is proper and cocompact.
\end{proof}


\section{The action of $G$ on $\simp\mathbf X$}\label{sec:actiononboundary}
Let $G$ be an $n$-dimensional crystallographic group acting properly and cocompactly on the CAT(0) cube complex $\mathbf X$ and recall that $\eta:\mathbf X\rightarrow\Euclidean^n$ denotes a $G$-equivariant, $(\lambda,\mu)$-quasi-isometry.  By Theorem~\ref{thm:boundaryoctahedron} and Proposition~\ref{prop:inducedaction}, there is an exact sequence
\[1\rightarrow K\rightarrow G\rightarrow\Aut(\mathbf Q_n),\]
where $K$ is the normal subgroup of $G$ consisting of those elements that act as the identity on $\simp\mathbf X\cong\mathbf Q_n$.    The main theorem of this section is:

\begin{thm}\label{thm:main}
Let $G$ be an $n$-dimensional crystallographic group, with $n\geq 1$.  If $G$ acts properly and cocompactly on a CAT(0) cube complex $\mathbf X$, then there is an exact sequence
\[1\rightarrow\integers^n\rightarrow G\rightarrow\Aut(\mathbf Q_n),\]
and, moreover, $G$ is hyperoctahedral.
\end{thm}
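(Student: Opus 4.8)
The plan is to analyse the homomorphism $\varphi\colon G\to\Aut(\mathbf Q_n)$ furnished by Theorem~\ref{thm:boundaryoctahedron} and Proposition~\ref{prop:inducedaction}, to identify its kernel $K$ with $T_{_G}$, and then to recognise the induced $P_{_G}$-action on the vertex set of $\mathbf Q_n$ as a real form of the defining representation $\bar{\theta}$. Writing $\Aut(\mathbf Q_n)=O(n,\integers)$ as the group of signed permutations of the $2n$ vertices $\{\pm\dot e_i\}_{i=1}^n$ of $\mathbf Q_n$, the entire theorem reduces to producing a single $A\in GL(n,\reals)$ with $\bar{\theta}(p)=A\,\varphi(g)\,A^{-1}$ whenever $\psi(g)=p$. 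Such an $A$ simultaneously witnesses hyperoctahedrality (taking $\iota=\bar{P}$ and $\rho$ to be conjugation by $A$) and, since $\bar{\theta}$ is faithful, forces $\varphi$ to factor through a faithful monomorphism $\bar{P}\colon P_{_G}\to\Aut(\mathbf Q_n)$; the latter gives $K=T_{_G}\cong\integers^n$, hence the exact sequence.

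First I would check that $T_{_G}\subseteq K$. Fix a $G$-equivariant $(\lambda,\mu)$-quasi-isometry $\eta\colon\mathbf X\to\Euclidean^n$ as in the proof of Lemma~\ref{lem:induction1}. For $t\in T_{_G}$ and any combinatorial geodesic ray $\gamma$, equivariance gives $\eta(t\gamma)=\eta(\gamma)+\tau_t$, so $\eta(t\gamma)$ and $\eta(\gamma)$ lie at Hausdorff distance $\|\tau_t\|$ in $\Euclidean^n$; pulling back through $\eta$ shows that $t\gamma$ and $\gamma$ fellow-travel in $\mathbf X$. The hyperplane count from the proof of Lemma~\ref{lem:case1} then yields $|\mathcal W(t\gamma)\triangle\mathcal W(\gamma)|<\infty$, so $[t\gamma]=[\gamma]$. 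As $\gamma$ was arbitrary, $t$ fixes every minimal class and hence acts as the identity on $\simp\mathbf X$, whence $t\in K$. Therefore $\varphi$ factors through $\psi$ to give $\bar{P}\colon P_{_G}\to\Aut(\mathbf Q_n)$.

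Next I would set up a $P_{_G}$-equivariant dictionary between the $2n$ vertices of $\mathbf Q_n$ and $2n$ unit directions in $\Euclidean^n$. A minimal class $[\gamma]$ corresponds, under $\eta$, to a genuine direction $d([\gamma])\in S^{n-1}$ (this is well defined once one knows minimal classes are carried to honest directions, for which I would invoke the structure described below). Because $\theta(g)$ is the composite of $\bar{\theta}(\psi(g))$ with a translation, and a translation does not affect asymptotic direction, one gets $d(\bar{P}(p)\cdot v)=\bar{\theta}(p)\,d(v)$ for every vertex $v$. It then remains to verify two geometric facts: (b) the two vertices of each antipodal (non-adjacent) pair of $\mathbf Q_n$ are carried to antipodal directions $\pm d_i$; and (c) the $n$ directions $d_1,\dots,d_n$ obtained by selecting one vertex from each antipodal pair are linearly independent. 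Granting (b) and (c), I set $A\dot e_i=d_i$: invertibility is (c), the assignment $\pm\dot e_i\mapsto\pm d_i=d(v_i^{\pm})$ is consistent by (b), and the displayed equivariance shows precisely that $A$ intertwines the signed-permutation action $\bar{P}$ with $\bar{\theta}$, so that $\bar{\theta}(p)=A\,\bar{P}(p)\,A^{-1}$, completing the proof as in the first paragraph.

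The heart of the matter, and the step I expect to be the main obstacle, is establishing (b) and (c): that the purely combinatorial vertex set of $\simp\mathbf X$ is carried onto $\pm$ a genuine basis of $\Euclidean^n$. I would obtain this from the product structure underlying Theorem~\ref{thm:boundaryoctahedron}. As in Remark~\ref{rem:alternative}, the essential core of $\mathbf X$ decomposes, by the rank-rigidity of~\cite{CapraceSageev}, as a product $\mathbf X_1\times\cdots\times\mathbf X_n$ of $n$ unbounded factors each quasi-isometric to $\reals$, matching the join decomposition $\mathbf Q_n=\mathbf Q_1\star\cdots\star\mathbf Q_1$ into $n$ copies of $\mathbf Q_1$. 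The two ends of the $i$-th quasiline factor supply the $i$-th antipodal pair of vertices together with manifestly antipodal directions $\pm d_i$, giving (b); and since the product is a top-dimensional quasiflat, its $n$ factor-directions span $\Euclidean^n$, giving (c). Alternatively, and more in the self-contained spirit of Section~\ref{sec:bounarychar}, I would read (b) and (c) off inductively from the decomposition $\simp\mathbf X\cong\simp\mathbf H\star\mathbf Q_1$ of Lemma~\ref{lem:induction1}: the two $0$-simplices $v,v^{*}$ produced there correspond to the direction normal to the $T_{n-1}$-invariant hyperplane $X\subset\Euclidean^n$ and its negative, furnishing one antipodal pair, while the remaining pairs and their linear independence follow by applying the inductive hypothesis to $\mathbf H$ equipped with the $(n-1)$-dimensional lattice $T_{n-1}$.
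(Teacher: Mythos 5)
Your proposal is correct in substance, but it takes a genuinely different route from the paper's proof --- in fact, essentially the alternative route that the paper itself gestures at in the unlabeled remark following Lemma~\ref{lem:actsonrn}, which says Theorem~\ref{thm:main} can be proved ``by considering a basis of $\Euclidean^n$ arising from the images in $\Euclidean^n$ of combinatorial geodesic rays representing the 0-simplices of $\simp\mathbf X$.'' The paper instead proves $K\leq T_{_G}$ directly by a geometric argument (Lemma~\ref{lem:torsionfreecase}: an element fixing $\simp\mathbf X$ pointwise moves every point of $\Euclidean^n$ a uniformly bounded distance, hence is a translation), and it obtains the conjugating matrix $A$ by first cubulating $G$ properly and cocompactly on $\mathbf R_n$ (Lemma~\ref{lem:actsonrn}) and then invoking Bieberbach's rigidity theorem to affinely conjugate the two isometric actions $f$ and $\theta$ on $\Euclidean^n$; hyperoctahedrality follows because $f$ visibly has point group in $O(n,\integers)$. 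You avoid both Bieberbach rigidity and the explicit action on $\mathbf R_n$, building $A$ by hand from asymptotic directions and then deducing $K\leq T_{_G}$ for free from faithfulness of $\bar{\theta}$ --- a nice economy that makes Lemma~\ref{lem:torsionfreecase} unnecessary. Note, though, that both routes lean on the same underlying input, the rank-rigidity product decomposition of the essential core: the paper uses it inside Lemma~\ref{lem:actsonrn}, and you use it for your facts (b) and (c); and the paper's detour through $\mathbf R_n$ buys something needed independently, namely the third assertion of Theorem~A. One step deserves more care than your phrase ``manifestly antipodal'' suggests: a quasi-geodesic ray in $\Euclidean^n$ need not converge in direction at all, so the existence of $d([\gamma])$ is not automatic and cannot be read off from the quasiline factors alone. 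It is rescued exactly by equivariance: a representative of the $i$-th pair of vertices crosses cofinitely only factor-$i$ hyperplanes, hence fellow-travels the $\langle t_i\rangle$-orbit of a point (where $t_i$ generates $\stabilizer_G(\mathbf R'_i)\cap T_{_G}$), and the $\eta$-image of that orbit is the arithmetic progression $\eta(x_0)+k\tau_{t_i}$, giving the antipodal directions $\pm\tau_{t_i}/\|\tau_{t_i}\|$ for your (b); linear independence in (c) then follows because the $t_i$ generate a finite-index subgroup of $T_{_G}$, exactly as argued in the middle of the proof of Lemma~\ref{lem:actsonrn}. Your alternative inductive route via Lemma~\ref{lem:induction1} is shakier as stated: the ray $\beta$ representing $v$ need not lie on an axis for $b$ (the paper notes this explicitly), so bounding its drift parallel to the invariant subspace $X$ --- and hence identifying its direction with the normal to $X$ --- would require an additional argument; the product-decomposition route is the one to keep.
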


Before proving Theorem~\ref{thm:main}, we require two lemmas:

\begin{lem}\label{lem:translationsfixboundary}
$T_{_G}\leq K$.
\end{lem}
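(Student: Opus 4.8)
The plan is to show that every translation $t\in T_{_G}$ fixes each almost-equivalence class of combinatorial geodesic rays. Since the induced action on $\simp\mathbf X$ is simplicial (Proposition~\ref{prop:inducedaction}) and a simplicial automorphism fixing every $0$-simplex is the identity, this proves that $t$ acts trivially on $\simp\mathbf X$, i.e. $t\in K$, so that $T_{_G}\leq K$.

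The tool I would isolate first is a \emph{fellow-traveling} criterion: if $\gamma,\gamma'$ are combinatorial geodesic rays and there is a constant $C$ with $d_{\mathbf X}(\gamma(s),\gamma'(s))\leq C$ for all $s\geq 0$, then $[\gamma]=[\gamma']$. To see this, consider a hyperplane $H\in\mathcal W(\gamma)-\mathcal W(\gamma')$. Since $\gamma$ is geodesic it crosses $H$ exactly once, while $\gamma'$ lies entirely in one halfspace of $H$; accordingly $H$ separates $\gamma(s)$ from $\gamma'(s)$ either for all small $s$ (in particular at $s=0$) or for all large $s$. In the first case all such $H$ simultaneously separate $\gamma(0)$ from $\gamma'(0)$, so there are at most $d_{\mathbf X}(\gamma(0),\gamma'(0))\leq C$ of them. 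In the second case, ordering these hyperplanes by the parameter at which they cross $\gamma$ and evaluating just past the largest such parameter shows that at most $C$ of them occur, since the number of hyperplanes separating $\gamma(s)$ from $\gamma'(s)$ equals $d_{\mathbf X}(\gamma(s),\gamma'(s))\leq C$. Hence $|\mathcal W(\gamma)-\mathcal W(\gamma')|\leq 2C$, and by symmetry $\mathcal W(\gamma)\triangle\mathcal W(\gamma')$ is finite, i.e. $[\gamma]=[\gamma']$.

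With this in hand I would apply the criterion to $\gamma$ and $t\gamma$. Let $\eta:\mathbf X\to\Euclidean^n$ be a $G$-equivariant $(\lambda,\mu)$-quasi-isometry, as in the proof of Lemma~\ref{lem:induction1}. Because $t$ acts on $\Euclidean^n$ by the translation $r\mapsto r+\tau_t$, equivariance gives $\eta(t\gamma(s))=\eta(\gamma(s))+\tau_t$, so the Euclidean distance between $\eta(\gamma(s))$ and $\eta(t\gamma(s))$ equals $\|\tau_t\|$ for every $s$. Pulling this back through the quasi-isometry yields $d_{\mathbf X}(\gamma(s),t\gamma(s))\leq\lambda(\|\tau_t\|+\mu)$ for all $s$, so $\gamma$ and $t\gamma$ fellow-travel with $C=\lambda(\|\tau_t\|+\mu)$. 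Note that although $t$ is a hyperbolic isometry of $\mathbf X$ with positive translation length, it acts on $\Euclidean^n$ as a \emph{translation}, so $\gamma$ and $t\gamma$ are pushed parallel to one another rather than along their common direction; this is precisely what keeps the distance at equal parameters bounded. The fellow-traveling criterion then gives $[t\gamma]=[\gamma]$ for every combinatorial geodesic ray $\gamma$, which says exactly that $t$ fixes every simplex of $\simp\mathbf X$.

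The main point requiring care is the fellow-traveling criterion, specifically the bookkeeping showing that only finitely many hyperplanes cross one ray but not the other; everything else follows directly from equivariance of $\eta$ together with the fact that $T_{_G}$ acts on $\Euclidean^n$ purely by translations. I expect no serious obstacle beyond keeping the two cases (separation for small versus large parameter) straight and confirming that the induced simplicial automorphism is determined by its action on $0$-simplices.
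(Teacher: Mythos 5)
Your proposal is correct and follows essentially the same route as the paper: pull back the uniform bound $\|\eta(\gamma(s))-t(\eta(\gamma(s)))\|=\|\tau_t\|$ through the $G$-equivariant quasi-isometry $\eta$ to get $d_{\mathbf X}(\gamma(s),t\gamma(s))\leq\lambda(\|\tau_t\|+\mu)$ for all $s$, and conclude that $\mathcal W(\gamma)\triangle\mathcal W(t\gamma)$ is finite, so $t[\gamma]=[\gamma]$ for every ray. The only difference is that you spell out the fellow-traveling-implies-almost-equivalence step via the two-case hyperplane count (which the paper leaves implicit here, though it runs the same count in the proof of Lemma~\ref{lem:case1}), and your bookkeeping there is sound.
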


\begin{proof}
Let $t\in T_{_G}$ and let $\gamma:[0,\infty)\rightarrow\mathbf X$ be a combinatorial geodesic ray.  Now, for all $s\geq 0$, we have $d_{\mathbf X}(\gamma(s),t(\gamma(s))\leq\lambda\left(\|\eta(\gamma(s))-t(\eta(\gamma(s)))\|+\mu\right)\leq\lambda\left(\|t(0)\|+\mu\right).$ Thus $\gamma$ and $t(\gamma)$ fellow-travel in $\mathbf X$, so that $\mathcal W(\gamma)\triangle\mathcal W(t(\gamma))$ is finite, i.e. $[\gamma]=t[\gamma]$.
\end{proof}

\begin{lem}\label{lem:torsionfreecase}
Let $g\in G$ fix $\simp\mathbf X$.  Then $g\in T_{_G}$.
\end{lem}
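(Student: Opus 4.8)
The plan is to prove the contrapositive. Writing $p=\psi(g)\in P_{_G}$, I will show that if $g$ fixes $\simp\mathbf X$ pointwise then $\bar\theta(p)$ is the identity of $O(n,\reals)$; since $\bar\theta$ is faithful this forces $p=1$, i.e. $g\in\kernel\psi=T_{_G}$. The engine of the argument is that $g$ acts on the $2n$ $0$-simplices of $\simp\mathbf X\cong\mathbf Q_n$ through the point group, and that these $0$-simplices can be identified with the $2n$ translation directions $\pm\dot t_i$ via the $G$-equivariant quasi-isometry $\eta:\mathbf X\rightarrow\Euclidean^n$.

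First I would set up this identification. Fix a basis $t_1,\dots,t_n$ of $T_{_G}$; after replacing each $t_i$ by a power (which changes neither its translation direction nor the boundary simplices it determines), the isometry classification of~\cite{HaglundSemisimple} lets me assume each $t_i$ is combinatorially hyperbolic, with a combinatorial axis $\alpha_i:\reals\rightarrow\mathbf X$. By the remark preceding Section~\ref{sec:actiononboundary}, the two ends of $\alpha_i$ determine $0$-simplices $v_i^{+},v_i^{-}$ of $\simp\mathbf X$. Because $\alpha_i$ is $t_i$-invariant and $\theta(t_i)$ is the Euclidean translation of vector $\dot t_i$, the quasigeodesic $\eta(\alpha_i)$ is invariant under a Euclidean translation and therefore lies at bounded Hausdorff distance from the line $\reals\dot t_i$; hence $v_i^{\pm}$ correspond under $\eta$ to the directions $\pm\dot t_i$. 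Since $\{t_i\}$ generates the full-rank lattice $T_{_G}$, the vectors $\{\dot t_i\}$ span $\reals^n$.

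Next I would run the conjugation argument. As $T_{_G}$ is normal in $G$, the element $s_i=gt_ig^{-1}$ lies in $T_{_G}$ and, since conjugation in $G$ acts on $T_{_G}$ through $\bar\theta\circ\psi$, it is the Euclidean translation of vector $\bar\theta(p)\dot t_i$. Consequently $g\alpha_i$ is a combinatorial axis for $s_i$, and its positive end $gv_i^{+}$ is the $0$-simplex whose $\eta$-direction is $\bar\theta(p)\dot t_i$. If $g$ fixes $\simp\mathbf X$, then $gv_i^{+}=v_i^{+}$, so the representing rays $\alpha_i|_{[0,\infty)}$ and $g\alpha_i|_{[0,\infty)}$ are almost-equivalent, and hence (see below) their $\eta$-images point in the same direction: $\bar\theta(p)\dot t_i=\lambda_i\dot t_i$ for some $\lambda_i>0$. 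Since $\bar\theta(p)$ is orthogonal, $\lambda_i=1$, so $\bar\theta(p)$ fixes each $\dot t_i$. As these span $\reals^n$, we get $\bar\theta(p)=\mathrm{id}$, whence $p=1$ and $g\in T_{_G}$.

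The main obstacle is the single analytic input: that two combinatorial geodesic rays representing the \emph{same} $0$-simplex — hence almost-equivalent — have $\eta$-images with a common direction at infinity. Almost-equivalence only asserts that the two rays cross the same hyperplanes up to a finite set, and in general a quasigeodesic of $\Euclidean^n$ need not converge in direction; to upgrade almost-equivalence to fellow-traveling (and so to equality of Euclidean directions) I will use cocompactness of the $G$-action together with the fact that each ray in question is tracked by a translation axis. A clean alternative that sidesteps any fellow-traveling estimate is to pass to the essential core and invoke the product decomposition $\mathbf X\cong\mathbf R'_1\times\cdots\times\mathbf R'_n$ of Remark~\ref{rem:alternative} and Lemma~\ref{lem:actsonrn}: there the $2n$ $0$-simplices are literally the ends of the $n$ line factors, $t_i$ translates the $i$-th factor along $\dot t_i$, and $g$ permutes the factor directions exactly by $\bar\theta(p)$, so that fixing all of them forces $\bar\theta(p)=\mathrm{id}$ as before.
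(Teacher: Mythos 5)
Your argument has a genuine gap, and it sits exactly where you flagged it --- but it is not a missing estimate, it is a step that is false as stated. First, the ends of an axis of an arbitrary basis element of $T_{_G}$ need not represent $0$-simplices: the remark you cite guarantees only that a hyperbolic element stabilizes two \emph{simplices} of $\simp\mathbf X$, and these are typically higher-dimensional (in $\mathbf R_2$, the axis of the translation $(1,1)$ represents a $1$-simplex). Second, and fatally, almost-equivalence does not determine the Euclidean direction, even for translation-tracked rays in a cocompact complex: in $\mathbf R_2$, combinatorial geodesic rays from the origin tracking the translations $(2,1)$ and $(1,2)$ cross the same hyperplanes up to a finite set, hence are almost-equivalent and represent the same simplex, yet their $\eta$-images have distinct directions; moreover the reflection across the diagonal is an orthogonal, lattice-preserving map exchanging these two directions. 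So take $T_{_G}=\integers^2$ with basis $\dot t_1=(2,1)$, $\dot t_2=(1,2)$ and $g$ realizing that reflection: $g$ fixes every class $[\alpha_i^{\pm}]$ that your argument inspects, yet $\bar\theta(\psi(g))\neq 1$. (This $g$ does not fix all of $\simp\mathbf X$ --- it swaps the $0$-simplices $\pm e_1\leftrightarrow\pm e_2$ --- but your proof never uses more than the fixing of the four axis-end simplices, so it proves a false statement.) Consequently no fellow-traveling upgrade is possible; the inference ``same simplex at infinity implies same direction'' is valid only for \emph{minimal} classes, and producing a basis of $T_{_G}$ whose axes represent minimal classes is precisely the hard part you have not supplied.

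Your fallback sketch is where the actual content would have to live, and it is essentially the machinery of Remark~\ref{rem:alternative} and Lemma~\ref{lem:actsonrn}: pass to the essential core, decompose it as $\mathbf R'_1\times\cdots\times\mathbf R'_n$, take $t_i$ generating $\stabilizer_G(\mathbf R'_i)\cap T_{_G}$; then the factor ends are honest $0$-simplices, minimal classes do pin down directions, and your conjugation computation closes. But as written this is a one-sentence assertion: you would still need that automorphisms permute the irreducible quasi-line factors, that this permutation agrees with the action on the join factors of $\simp\mathbf X$, and that passing to the essential core preserves the simplicial boundary $g$-equivariantly --- the last point is exactly what the paper flags as the unverified step in Remark~\ref{rem:alternative}, which is why it avoids this route. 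The paper's own proof instead inducts on $n$ through the decomposition $\simp\mathbf X\cong A\star(v\sqcup v^*)$ of Lemma~\ref{lem:induction1}: since $g$ fixes $A$, the affine subspace $g(X)$ must be parallel to $X$ (a non-parallel image would move some ray of $\mathbf H$ arbitrarily far from $\mathbf H$, so its class could not lie in $A$), and then fixing $A$ forces $g$ to move every point of $\Euclidean^n$ a uniformly bounded distance, so $g$ is a translation. You should either adopt that inductive argument or fully develop the product-decomposition route; the main proposal as it stands cannot be repaired.
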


\begin{proof}
Let $H,\mathbf H,\mathbf C,\mathbf C^*\subset\mathbf X$ and $A,E,E^*,v,v^*\subset\simp\mathbf X$ be as in the proof of Lemma~\ref{lem:induction1}, so that $\simp\mathbf X\cong A\star(v\sqcup v^*)$.  If $n=1$, then $A=\emptyset$ and either $g\in G$ acts as a translation in $\reals$, or $g$ exchanges $v$ and $v^*$.  Hence $K\leq T_{_G}$.

Suppose that $n\geq 2$, so that $A\neq\emptyset$, and let $g\in K$.  By induction, if $g\in G_{n-1}\cap K$, then $g\in T_{n-1}$.  Hence suppose that $gH\neq H$.  Now, $g$ fixes $A$ and fixes the two 0-simplices $v,v^*$.  Recall that $X\subset\reals$ is a copy of $\reals^{n-1}$ whose stabilizer is $G_{n-1}$ and which lies at finite Hausdorff distance from $\eta(\mathbf H)$.  Assume that $g(X)\neq X$ and that $\psi(g)$ is a non-identity orthogonal transformation.  Now, if $g(X)$ is not parallel to $X$, then applying the quasi-inverse of $\eta$ shows that for each $N$, there exists $h_N\in\mathbf H$ such that $d_{\mathbf X}(\mathbf H,g(h_N))\geq N$.  Therefore, by cocompactness, there exists a combinatorial geodesic ray $\gamma$ in $\mathbf H$ such that for all $N\geq 0$, there exists $s_N\geq 0$ for which $d_{\mathbf X}(g\gamma(s_N),\mathbf H)\geq N$.  Hence $\gamma$ is crossed by infinitely many hyperplanes that do not cross $\mathbf H$, and thus $\gamma$ does not represent a simplex in $A$, a contradiction.
Hence $g\in T_{_G}$.

If $X$ and $g(X)$ are parallel, then $\mathbf H$ and $g(\mathbf H)$ lie at finite Hausdorff distance in $\mathbf X$, and, since $g$ fixes $A$, every minimal geodesic ray in $\mathbf H$ fellow-travels with its $g$-translate.  Applying $\eta$ shows that $g$ moves every point in $\Euclidean^n$ a uniformly bounded distance, whence $g\in T_{_G}$.
\end{proof}

We are now ready to prove Theorem~\ref{thm:main}.  The second assertion could also be deduced from Theorem~\ref{thm:product_action}, but here we deduce it directly from Theorem~\ref{thm:boundaryoctahedron}, avoiding the action on $\mathbf R_n$.

\begin{proof}[Proof of Theorem~\ref{thm:main}]
\textbf{The exact sequence:}  By Lemma~\ref{lem:translationsfixboundary}, $T_{_G}\leq K$.  By Lemma~\ref{lem:torsionfreecase}, $K\leq T_{_G}$, whence $K\cong T_{_G}\cong\integers^n$ and $P_{_G}\cong G/K$ is isomorphic to a subgroup of $\Aut(\mathbf Q_n)$.  It remains to verify that $G$ is hyperoctahedral.

\textbf{A monomorphism $P_{_G}\rightarrow O(n,\integers)$:}  Let $I:G\rightarrow\Aut(\simp\mathbf X)\cong\Aut(\mathbf Q_n)$ be the induced action on the simplicial boundary, whose kernel is $T_{_G}$.  Note that for a simplex $v$ of $\simp\mathbf X$ corresponding to an almost-equivalence class $[\gamma]$, the simplex $I(g)(v)$ corresponds to $[g(\gamma)]$.

Let $\{\pm v_i\}_{i=1}^n$ be the set of 0-simplices of $\simp\mathbf X$, labeled so that for each $i$, the simplex $-v_i$ is the unique 0-simplex that is not adjacent to $+v_i$.  A simplex $u$ of $\simp\mathbf X$ is uniquely expressible as a vector $\vec u=(z_{u,i})_{i=1}^n$ with $z_{u,i}\in\{-1,0,1\}$, where $z_{u,i}=\pm1$ exactly when $\pm v_i\in u$, and 0 otherwise.  For each $g\in G$, we represent $I(g)$ as an $n\times n$ signed permutation matrix $M_g$, so that ${I(g)(\vec u)}=M_g\vec u$.  The map $\psi(g)\mapsto M_g$ defines a monomorphism $\iota:P_{_G}\rightarrow O(n,\integers)$.

\textbf{A monomorphism $O(n,\integers)\rightarrow O(n,\reals)$:}  Let $\eta':\Euclidean^n\rightarrow\mathbf X$ be a $G$-equivariant quasi-inverse for $\eta$.  For any geodesic ray $L:[0,\infty)\rightarrow\Euclidean^n$ with $L(0)=0$, there exists a combinatorial geodesic ray $\gamma_L:[0,\infty)\rightarrow\mathbf X$ such that $\eta'(L)$ and $\gamma_L$ fellow-travel at distance $\kappa$ for some $\kappa$ independent of $L$.  Let $v_L$ be the simplex of $\simp\mathbf X$ represented by $\gamma_L$.  Note that, since combinatorial geodesics that fellow-travel are almost-equivalent, $v_L$ does not depend on the particular choice of $\gamma_L$.

Now, for any $L$ and any $g\in G$, let $L'=\bar{\theta}(\psi(g))(L)+\tau_g$, so that $g\eta'(L)=\eta'(gL)=\eta'(\bar{\theta}(\psi(g))(L)+\tau_g)=\eta'(L')$, which fellow-travels with $g\gamma_L$ and $\gamma_{L'}$, where $\gamma_{L'}$ is a combinatorial geodesic ray that fellow-travels with $\eta'(\bar{\theta}(\psi(g))(L))$.  Thus $[\gamma_{L'}]=[g(\gamma_L)]$, i.e. $I(g)(v_L)=[g(\gamma_{L'})]$

We now define a monomorphism $\rho:O(n,\integers)\rightarrow O(n,\reals)$ such that $\rho\circ\iota=\bar{\theta}$.  Let $M\in O(n,\integers)$ be a signed permutation matrix.  For each $i$, let $\gamma_i^+$ be a combinatorial geodesic ray representing $+v_i$ and define $\gamma_i^-$ likewise for $-v_i$.  Since $v_i^+$ and $v_i^-$ are non-adjacent 0-simplices, these rays can be chosen so that their union is equal to the image of a combinatorial geodesic $\sigma_i:\reals\rightarrow\mathbf X$ (see~\cite[Theorem~3.24]{HagenBoundary}).

For each $i$, there exists a unique (oriented) line $L_i$ in $\Euclidean^n$ such that $L_i(0)=0$ and $\eta(\sigma_i)$ fellow-travels with $L_i$.  Uniqueness is obvious, since any two such lines $L_i$ and $L^{\circ}_i$ must fellow-travel, and any two distinct lines through the origin in $\Euclidean^n$ either coincide or fail to fellow-travel.  We now construct $L_i$.  Let $x$ be the initial point of $\gamma_i$.  Without loss of generality, since $T_{_G}$ acts cocompactly, the hyperplane $H$ dual to the initial 1-cube of $\gamma_i$ is essential and has the property that for some $g\in T_{_G}$, the hyperplanes $H$ and $gH$ are disjoint and both cross $\gamma_i$.  Now, for $0\leq k<k'<k''$, the hyperplane $g^{k'}H$ separates $g^{k''}H$ from $g^kH$.  Otherwise, these three essential hyperplanes would form a facing triple leading to three pairwise-nonadjacent 0-simplices in $\simp\mathbf X$, which is easily seen to be impossible in a hyperoctahedron.  Hence $g^kH$ crosses $\gamma_i$ for all $k\geq 0$.  Applying this argument again on the other side of $H$ shows that $\sigma_i$ can be chosen to be an axis for $g$, and we take $L_i$ to be the line in $\Euclidean^n$ in the direction of $\tau_g$.

Let $\dot t_i$ be the unit vector in the positive $L_i$-direction.  Then the $\dot t_i$ form a basis for $\reals^n$.  Indeed, let $E\subseteq\Euclidean^n$ be the subspace spanned by $\{\dot t_i\}_{i=1}^n$.  If $E\subsetneq\Euclidean^n$, then there are 0-cubes in $\mathbf X$ arbitrarily far from $\eta'(E)$, and hence either there are either simplices of $\simp\mathbf X$ that are not spanned by $\{\pm v_i\}_{i=1}^n$ or some $+v_i$ is adjacent to $-v_i$.  Each of these situations is impossible, whence $\{\dot t_i\}_{i=1}^n$ is a basis.

Let $A\in GL(n,\reals)$ be the matrix so that $\dot t_i=A\dot e_i$ for each $i$, where $\{\dot e_i\}_{i=1}^n$ is the standard basis.  Then for each signed permutation matrix $M$, the matrix $AMA^{-1}$ is an isometry of $\Euclidean^n$ that acts as a permutation of $\{\pm \dot t_i\}_{i=1}^n$.  The matrix $A$ is uniquely determined by $\{\dot t_i\}$ and so $AMA^{-1}$ is uniquely determined by $M$ and the $\{\pm v_i\}_{i=1}^n$.  Let $\rho(M)=AMA^{-1}$; this defines a monomorphism $\rho: O(n,\integers)\rightarrow O(n,\reals)$.

\textbf{Conclusion:}  With $M$ as above, for each $i$, there is a combinatorial geodesic $\sigma_i'$ in $\mathbf X$ corresponding to the pair $(M(-v_i),M(+v_i))$, and $\eta(\sigma'_i)$ fellow-travels with a unique line $L'_i$ through the origin in $\Euclidean^n$.  Now, since $M$ is an automorphism of the boundary, and $L_i$ and $L'_i$ are uniquely determined by $+v_i$ and $M(\vec{+v_i})$, we see that $L'_i=L_j$ or $-L_j$ for some $j\leq n$.

Hence, for $g\in G$, we have $\rho(\iota(\psi(g)))=AM_gA^{-1}$.  On the other hand, for all $i$, we have that $AM_gA^{-1}(L_i)=\pm L_j$ for some $j$, and $\eta'(\pm L_j)$ fellow-travels with $\sigma_j$.  But $\eta'(\bar{\theta}(\psi(g))(L_i)$ fellow-travels with $\sigma_j$, which represents the simplices $\pm v_j=\iota(\psi(g))(\pm v_i)$.  Hence $\rho\circ\iota=\bar{\theta}$, and $\iota$ corresponds to conjugation by $A\in GL(n,\reals)$, i.e. $G$ is hyperoctahedral.\end{proof}
\section{Constructing actions on $\mathbf R_n$}\label{sec:cubulating}
\subsection{The standard cubulation of a crystallographic group}
The next lemma involves a well-known construction (see, e.g. Section~16 of~\cite{WiseIsraelHierarchy}), and we include a proof only for completeness. By a result of Zassenhaus (see~\cite{RatcliffeBook}), the conclusion of Lemma~\ref{lem:cubulating} also holds for torsion-free virtually free abelian groups, and this is the form in which it is given in~\cite{WiseIsraelHierarchy}.

\begin{lem}\label{lem:cubulating}
Let $G$ be an $n$-dimensional crystallographic group.  Then $G$ acts properly on the CAT(0) cube complex $\mathbf R_N$ for some $n\leq N\leq n|P_{_G}|$.
\end{lem}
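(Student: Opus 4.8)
The plan is to equip $\Euclidean^n$ with a $G$-invariant geometric wallspace structure whose dual cube complex is $\mathbf R_N$, and then invoke the Hruska--Wise criterion recalled in Section~\ref{sec:wallspace}. First I would fix a basis $\{\dot t_i\}_{i=1}^n$ of the translation lattice $\mathcal L$ and use it to identify $\Euclidean^n$ with $\reals^n$, so that $T_{_G}=\integers^n$, each translation vector $\tau_g$ lies in $\integers^n$, and the linear part $M=\bar{\theta}(\psi(g))$ of each $g\in G$ is an integer matrix preserving $\integers^n$ (because $P_{_G}$ preserves $\mathcal L$). In these coordinates the walls of $\mathbf R_n$ are the $n$ families of coordinate hyperplanes $\{x:x_i\in\tfrac12+\integers\}$, $1\leq i\leq n$; I would let $\mathcal W_0$ denote this finite collection and set $\mathcal W=G\cdot\mathcal W_0$, which is $G$-invariant by construction.

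The next step is to analyze this orbit. Writing $g$ as $x\mapsto Mx+\tau_g$, the image of the wall $\{x:x_i=\tfrac12\}$ is $\{x:\langle w,x\rangle=\tfrac12+\langle w,\tau_g\rangle\}$, where $w=M^{-\mathsf{T}}e_i$. Since $M^{-\mathsf{T}}\in GL(n,\integers)$, the covector $w$ is a \emph{primitive} integer vector and $\langle w,\tau_g\rangle\in\integers$, so every $G$-translate of a coordinate wall is a hyperplane of the form $\{\langle w,x\rangle\in\tfrac12+\integers\}$; primitivity of $w$ guarantees that the $T_{_G}$-translates of a single such wall already fill out this entire equally-spaced parallel family. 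Thus $\mathcal W$ decomposes into finitely many discrete parallel families, one for each direction $w$ (taken up to sign), and the set of directions is exactly the orbit of $\{e_1,\ldots,e_n\}$ under the finite group $\{M^{-\mathsf{T}}\}\cong P_{_G}$. Its cardinality $N$ therefore satisfies $n\leq N\leq n|P_{_G}|$: the lower bound because $e_1,\ldots,e_n$ themselves occur and are independent, the upper because an orbit of $n$ points under a group of order $|P_{_G}|$ has at most $n|P_{_G}|$ elements.

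I would then check the wallspace axioms and deduce properness. Each family is discrete, so only finitely many walls separate any two points and $(\Euclidean^n,\mathcal W)$ is a genuine geometric wallspace on which $G$ acts. Because the $N$ directions include the spanning set $\{e_i\}$, the separation count $\#(p,q)$ is bounded below by a positive multiple of $\|p-q\|$, yielding the linear separation property $d(p,q)\leq K_1\#(p,q)+K_2$. Since $G$ acts metrically properly on $\Euclidean^n$, the result of~\cite{HruskaWiseAxioms} quoted in Section~\ref{sec:wallspace} shows that $G$ acts properly by cubical automorphisms on the dual cube complex $\mathbf X$.

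Finally I would identify $\mathbf X$ with $\mathbf R_N$: two walls in distinct families are non-parallel affine hyperplanes and hence cross, while two walls in a common family are parallel and never cross, so the crossing pattern of $\mathcal W$ is the complete $N$-partite pattern of the wall set of $\mathbf R_N$, whose dual is the standard cubing $\mathbf R_N$ exactly as in Lemma~\ref{lem:highdimflat} (the dual of $N$ pairwise-transverse discrete families of parallel hyperplanes). The main obstacle I expect is precisely this last identification when $N>n$: the $N$ directions are then linearly dependent in $\reals^n$, so $\mathbf R_N$ has dimension strictly larger than $n$ and $\Euclidean^n$ embeds only as a quasi-isometric $n$-dimensional ``staircase,'' and one must verify that the consistent, canonical orientations of $\mathcal W$ correspond to all $0$-cubes of $\mathbf R_N$ rather than to a proper subcomplex. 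The discreteness of each family, secured above via primitivity of the covectors $w=M^{-\mathsf{T}}e_i$, is exactly what prevents a denser, non-cubical dual and makes this identification go through.
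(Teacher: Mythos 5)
Your proposal is correct and takes essentially the same route as the paper's proof: both cubulate $\Euclidean^n$ via the $G$-orbit of $n$ families of parallel codimension-1 walls built from a basis of the translation lattice, deduce properness from the linear separation property of~\cite{HruskaWiseAxioms}, and identify the dual cube complex with $\mathbf R_N$ as a product of $N$ nested, pairwise-crossing parallel families, with $n\leq N\leq n|P_{_G}|$ counted from the $P_{_G}$-orbit of the $n$ wall directions. Your explicit covector computation with $M^{-\mathsf{T}}e_i$ (primitivity giving discreteness of each family) and your check that all consistent canonical orientations are realized as 0-cubes are just more detailed renditions of steps the paper treats implicitly.
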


\begin{proof}
Let $\{\vec t_i\}_{i=1}^n$ be a basis for $\reals^n$. For each $i$, let $X_i=\linspan\{\vec t_j\}_{j\neq i}$, which is invariant under the translation $t_j$ for all $j\neq i$.  Define a wall by declaring $\mathfrak h^*(X_i)$ to be a component of $\Euclidean^n-X_i$ that contains the origin, and $\mathfrak h(X_i)=X_i\cup(\Euclidean^n-\mathfrak h^*(X_i))$.  Let $\mathcal W$ be the set of walls consisting of all $G$-translates of these $n$ geometric walls.

$G$ acts on the cube complex $\mathbf X$ dual to the wallspace $\left(\Euclidean^n,\mathcal W\right)$, and it remains to verify that this action is proper.  For $r_1,r_2\in\Euclidean^n$, write $r_1-r_2=\sum_{i=1}^n\nu_i\vec t_i,$ where $\nu_1,\ldots,\nu_n\in\reals$.  For each $i$, at least $\left\lfloor|\nu_i|\right\rfloor\geq|\nu_i|-1$ distinct $T_{_G}$-translates of $X_i$ separate $r_1$ from $r_2$, so that $\#(r_1,r_2)\geq\sum_{i=1}^n|\nu_i|-n.$
Therefore,
\[\|r_1-r_2\|^2=\sum_{i=1}^n\nu_i^2\|\vec t_i\|^2\leq\max_i\|\vec t_i\|^2\left(\sum_{i=1}^n|\nu_i|\right)^2\leq\max_i\|\vec t_i\|^2\left(\#(r_1,r_2)+n\right)^2,\]
so that the wallspace $\left(\Euclidean^n-\bigcup_{i=1}^nG(X_i),\mathcal W\right)$ satisfies the linear separation property.  Since $G$ acts metrically properly on $\Euclidean^n$, it follows that $G$ acts properly on $\mathbf X$.

Note that, for each $i$ and each $t,t'\in T_{_G}$, the walls $t(X_i)$ and $t'(X_i)$ do not cross, and $T_{_G}(X_i)$ is a collection of parallel codimension-1 hyperplanes in $\Euclidean^n$ such that the cube complex dual to $\left(\Euclidean^n,T_{_G}(X_i)\right)$ is isomorphic to $\mathbf R_1$.  Moreover, for any $g,g'\in G$ and $i,j\leq n$, the walls $g(X_i)$ and $g'(X_j)$ are parallel if and only if $i=j$ and $g'g^{-1}\in T_{_G}$; otherwise, the corresponding hyperplanes of $\mathbf X$ cross.  Hence $\mathbf X$ is isomorphic to the product of $N$ copies of $\mathbf R_1$, i.e. $\mathbf X\cong\mathbf R_N$, where $N$ is equal to the cardinality of the image of $P_{_G}(\{\vec t_i/\|\vec t_i\|\}_{i=1}^n)$ under the map $S^{n-1}\rightarrow S^{n-1}/\integers_2$.
\end{proof}


\subsection{Cocompactness when the point group is hyperoctahedral}
The next theorem combines with Theorem~\ref{thm:main} to prove that cocompactly cubulated crystallographic groups act properly and cocompactly on $\mathbf R_n$, without using Theorem~\ref{thm:product_action}.

\begin{thm}\label{thm:cubulatingcrystallographic}
Let $G$ be an $n$-dimensional hyperoctahedral crystallographic group.  Then $G$ acts properly and with a single orbit of $n$-cubes on $\mathbf R_n$.
\end{thm}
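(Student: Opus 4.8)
The plan is to run the standard cubulation of Lemma~\ref{lem:cubulating} from a basis adapted to the point group, check that the resulting wall arrangement has exactly $n$ parallel classes so that the dual complex is $\mathbf R_n$, and then use the translation lattice to get a single orbit of top-dimensional cubes. Concretely, I would first invoke Lemma~\ref{lem:hyperoctahedralbasis} to fix a basis $\{\dot t_i\}_{i=1}^n$ of $\Euclidean^n$ on which $\bar\theta(P_{_G})$ acts by signed permutations, taken to be a basis of the translation lattice $\mathcal L=T_{_G}$, and then form the geometric walls $X_i=\linspan\{\dot t_j\}_{j\neq i}$ together with all their $G$-translates, as in Remark~\ref{rem:basisonly}. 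Properness of the induced action on the dual cube complex $\mathbf X$ is immediate from Lemma~\ref{lem:cubulating}. Since $\bar\theta(P_{_G})$ permutes $\{\pm\dot t_i\}$, the image of $P_{_G}(\{\dot t_i/\|\dot t_i\|\}_{i=1}^n)$ in $S^{n-1}/\integers_2$ is exactly the $n$-element set $\{[\dot t_1],\dots,[\dot t_n]\}$, so the integer $N$ produced by Lemma~\ref{lem:cubulating} equals $n$ and $\mathbf X\cong\mathbf R_n$. (Equivalently, $\bar\theta(p)$ permutes the hyperplanes $X_i$, since $\bar\theta(p)(X_i)=\linspan\{\pm\dot t_{\sigma_p(j)}\}_{j\neq i}=X_{\sigma_p(i)}$, so every translate $g(X_j)$ is parallel to one of the $n$ pairwise non-parallel walls $X_1,\dots,X_n$.)

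It then remains to produce a single orbit of $n$-cubes, which I expect to be the step carrying the real weight. In the duality of Section~\ref{sec:wallspace} the $0$-cubes of $\mathbf X$ are the chambers (complementary regions) of the wall arrangement, and the translation lattice of $\mathbf R_n$ acts simply transitively on $0$-cubes exactly when it does so on $n$-cubes; hence it suffices to show $T_{_G}$ is transitive on chambers. The key observation is that every wall parallel to $X_i$ is already a $T_{_G}$-translate of $X_i$: if $g(X_j)$ is parallel to $X_i$ then $\bar\theta(\psi(g))(X_j)=X_i$, whence $g(X_j)=X_i+\tau_g$ with $\tau_g\in\mathcal L$. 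Because $\{\dot t_i\}$ is a basis of $\mathcal L$, the walls in direction $i$ are precisely $\{X_i+\ell : \ell\in T_{_G}\}$; reading off the offset of $\ell$ along the dual basis vector to $X_i$ then shows that $T_{_G}$ acts on the set of chambers through the full translation lattice $\integers^n$ of $\mathbf R_n$. Thus $T_{_G}$ is simply transitive on chambers, and therefore $G$ acts with a single orbit of $n$-cubes.

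The main obstacle is precisely the lattice-compatibility used at the outset. Lemma~\ref{lem:hyperoctahedralbasis} only furnishes a basis of $\reals^n$ on which $P_{_G}$ acts by signed permutations, whereas the single-orbit conclusion needs $P_{_G}$ to act by signed permutations on an actual basis of $T_{_G}$, so that $T_{_G}$ realizes the \emph{full} cubical translation lattice rather than a proper finite-index subgroup. Verifying that the hyperoctahedral hypothesis---real conjugacy of $\bar\theta(P_{_G})$ into $O(n,\integers)$---does upgrade to such an integral basis is the delicate point, since with a non-adapted basis the computation above produces $\mathbf R_n$ but with several $T_{_G}$-orbits of top cubes (the map $\mathcal L\to\integers^n$ recording wall offsets need not be surjective). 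Establishing this integral refinement of Lemma~\ref{lem:hyperoctahedralbasis} is what turns bare cocompactness into cocompactness with a single orbit of $n$-cubes, and is where I would concentrate the argument.
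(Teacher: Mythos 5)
Your outline is the paper's own argument, step for step: walls built from a sign-permuted basis as in Lemma~\ref{lem:cubulating} and Remark~\ref{rem:basisonly}, the observation that $\bar\theta(\psi(g))(X_i)=X_{\sigma(i)}$ forces exactly $n$ parallel classes so that the dual complex is $\mathbf R_n$, and the reduction of the single-orbit claim to transitivity of $T_{_G}$ on chambers (the paper phrases this as a single $T_{_G}$-orbit of maximal families of pairwise-crossing walls). The difference is that you stop where the real work begins. Your final paragraph correctly isolates the gap: Lemma~\ref{lem:hyperoctahedralbasis} produces only a basis $\{A\dot e_i\}$ of $\reals^n$, with no stated relation between $A$ and the lattice $\mathcal L$, whereas your chamber count needs $\{\dot t_i\}$ to be (after rescaling) a basis of $\mathcal L$ itself. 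The issue is even more severe than you say: without some rationality the construction fails before the orbit count, because for irrational $A$ the coordinate functional of $\mathcal L$ along $\dot t_i$ can have dense image (take $P_{_G}$ trivial, $\mathcal L=\integers^2$, $\dot t_1=(1,0)$, $\dot t_2=(\sqrt2,1)$), so the family $T_{_G}(X_1)$ is not locally finite and $(\Euclidean^n,\mathcal W)$ is not even a wallspace. Since you explicitly defer this step rather than carry it out, the proposal is not yet a proof.

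To close it, one can first upgrade $A$ to $GL(n,\rationals)$: in a lattice basis $\bar\theta(P_{_G})$ is integral, and real conjugacy of two rational representations implies rational conjugacy (the intertwiner space is defined over $\rationals$ and its invertible elements form a Zariski-open set nonempty over $\reals$); clearing denominators then puts the $\dot t_i$ in $\mathcal L$, spanning a finite-index $P_{_G}$-invariant sublattice, which restores discreteness, linear separation, and $\mathbf X\cong\mathbf R_n$. But, exactly as your offset map $\mathcal L\to\integers^n$ warns, $T_{_G}$-transitivity on $n$-cubes holds only when $\mathcal L$ splits as the direct sum of its projections along the chosen directions, i.e.\ when $\mathcal L$ itself admits a sign-permuted basis, and this can fail for a naive choice: for the planar group cmm, with centered lattice $\mathcal L=\integers(1,0)+\integers(\frac12,\frac h2)$ and point group the two axis reflections, the axis-aligned sign-permuted basis yields two $T_{_G}$-orbits of squares, and one must instead take $\dot t_1=(\frac12,\frac h2)$, $\dot t_2=(\frac12,-\frac h2)$, which the reflections swap up to sign and which is an honest basis of $\mathcal L$. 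So a complete argument needs either this monomial refinement of Lemma~\ref{lem:hyperoctahedralbasis} or a supplementary argument that point-group elements merge the $T_{_G}$-orbits (as in fact happens for cmm even with the bad basis). You should know that the paper's proof of Theorem~\ref{thm:cubulatingcrystallographic} does not address any of this: it feeds the Lemma~\ref{lem:hyperoctahedralbasis} basis directly into the wall construction and dismisses the orbit count as ``easily verified.'' You have therefore located precisely the thinnest point of the published argument --- but locating it is not the same as closing it.
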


\begin{proof}
Let $X_i$ be the codimension-1 subspaces given by applying the construction of walls in the proof of Lemma~\ref{lem:cubulating} to a basis $\{\vec t_i\}_{i=1}^n$ of $\Euclidean^n$ upon which $P_{_G}$ acts by signed permutations; such a basis exists by Lemma~\ref{lem:hyperoctahedralbasis}.  For any $g\in G$, we have $g(X_i)=\bar{\theta}(\psi(g))(X_i)+\tau_g$.  Now, since $\bar{\theta}(\psi(g))(\vec t_i)=\pm\vec t_j$ for some $j$, this implies that $g(X_i)\in T_{_G}(X_j)$.  Proceeding exactly as in the proof of Lemma~\ref{lem:cubulating} shows that $G$ acts properly on the CAT(0) cube complex $\mathbf R_n$ dual to the wallspace whose walls are $\bigcup_{i=1}^nT_{_G}(X_i)$.  It is easily verified that there is one $T_{_G}$-orbit of maximal families of pairwise-crossing walls, and hence $G$ acts on $\mathbf R_n$ with one orbit of $n$-cubes.
\end{proof}

\subsection{Stabilization}
From Lemma~\ref{lem:cubulating}, we get Corollary~C, by adding to the proof of Lemma~16.8 of~\cite{WiseIsraelHierarchy} the additional information that the groups involved are crystallographic.

\begin{cor}\label{cor:d1}
Let $G$ be an $n$-dimensional crystallographic group.  Then there exists $m\geq 0$ and $\phi:G\rightarrow GL(m,\integers)$ such that $\integers^m\rtimes_{\phi}G$ is a crystallographic group that acts properly and cocompactly on a CAT(0) cube complex.
\end{cor}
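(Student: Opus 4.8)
The plan is to avoid any attempt to realize $\integers^m\rtimes_\phi G$ as the subgroup $\langle G,\integers^N\rangle\le\isom(\mathbf R_N)$ of Wise's Lemma~16.8, and instead to build an explicit $\phi$ directly, using the fact that hyperoctahedrality is a condition over $\reals$. Concretely, I would take $\phi=\phi_0\circ\psi$, where $\psi:G\to P_G$ is the holonomy map and $\phi_0:P_G\to GL(m,\integers)$ is a representation to be chosen, and set $\Gamma=\integers^m\rtimes_\phi G$. Because $\phi$ kills $T_G$, the subgroup $\integers^m$ centralizes $T_G$, so $\integers^m\oplus T_G\cong\integers^{n+m}$ is an abelian normal subgroup of $\Gamma$ with quotient $P_G$ and holonomy representation $\phi_0\oplus\bar\theta$ (acting by $\phi_0$ on $\integers^m$ and by $\bar\theta$ on the lattice $T_G$). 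Since $\bar\theta$ is faithful this holonomy is faithful, so by Zassenhaus's theorem $\Gamma$ is an $(n+m)$-dimensional crystallographic group with translation lattice $\integers^{n+m}$ for \emph{any} choice of $\phi_0$. The whole problem is thus reduced to choosing $\phi_0$ so that $\phi_0\oplus\bar\theta$ is conjugate over $\reals$ to a signed-permutation representation, which is exactly the hyperoctahedral condition; Theorem~\ref{thm:cubulatingcrystallographic} would then supply the proper, cocompact action on $\mathbf R_{n+m}$.

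The key observation is that the definition of hyperoctahedral requires conjugacy into $O(n+m,\integers)$ only by a single $A\in GL(n+m,\reals)$, not conjugacy over $\integers$; this lets me replace integral representation theory by ordinary semisimplicity. The regular representation $\rho_{\mathrm{reg}}:P_G\to S_{|P_G|}\subseteq O(|P_G|,\integers)$ is a faithful permutation representation containing every simple $\rationals[P_G]$-module with positive multiplicity. Since $\rationals[P_G]$ is semisimple, for $k$ large enough $\bar\theta_\rationals$ is a direct summand of $\rationals[P_G]^{\oplus k}$, say $\rationals[P_G]^{\oplus k}\cong\bar\theta_\rationals\oplus\chi$. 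I would then set $m=k|P_G|-n$ and let $\phi_0$ be the $P_G$-action on any full $\integers[P_G]$-lattice $N\subset\chi$, for instance the $\integers[P_G]$-span of a $\rationals$-basis of $\chi$, so that $N_\rationals\cong\chi$ and $\phi_0:P_G\to GL(m,\integers)$.

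With these choices the holonomy satisfies $(\phi_0\oplus\bar\theta)_\rationals\cong N_\rationals\oplus\bar\theta_\rationals\cong\rationals[P_G]^{\oplus k}$, and this isomorphism furnishes $A\in GL(n+m,\rationals)\subseteq GL(n+m,\reals)$ conjugating $\phi_0\oplus\bar\theta$ to $\rho_{\mathrm{reg}}^{\oplus k}$, whose image lies in the permutation matrices inside $O(n+m,\integers)$. Taking $\iota=\rho_{\mathrm{reg}}^{\oplus k}$ (a monomorphism, as $\rho_{\mathrm{reg}}$ is faithful) and $\rho$ equal to conjugation by $A^{-1}$ verifies the hyperoctahedral condition for $\Gamma$, so Theorem~\ref{thm:cubulatingcrystallographic} yields a proper, cocompact action of $\Gamma=\integers^m\rtimes_\phi G$ on $\mathbf R_{n+m}$. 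I expect the only genuine subtlety to be the one flagged above: a direct geometric argument would pass to $\langle G,\integers^N\rangle$ (with $N$ from Lemma~\ref{lem:cubulating}) and try to split off a $P_G$-invariant complement $\integers^{N-n}$ of $T_G$ in $\integers^N$, which meets real integral splitting obstructions; the representation-theoretic route circumvents these precisely because hyperoctahedrality only demands conjugacy over $\reals$, where $\rationals[P_G]$ is semisimple.
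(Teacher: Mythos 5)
Your proof is correct, but it takes a genuinely different route from the paper's. The paper argues geometrically: by Lemma~\ref{lem:cubulating}, $G$ acts properly on $\mathbf R_N$ for some $N\leq n|P_G|$; setting $\ddot G=\Aut(\mathbf R_N)$, which is an $N$-dimensional hyperoctahedral crystallographic group, the paper checks that $G\hookrightarrow\ddot G$ (the kernel dies because $G$ acts faithfully on the $G$-invariant flat $\mathbb F\cong\Euclidean^n$), takes $S\cong\integers^{N-n}$ to be the maximal subgroup of $\ddot G$-translations orthogonal to $\mathbb F$, observes that $G$ normalizes $S$ (a conjugate $gsg^{-1}$ is again a translation orthogonal to $\mathbb F$, hence lies in $S$ by maximality), and concludes that $\widehat G\cong S\rtimes G\leq\ddot G$ acts properly on the \emph{same} $\mathbf R_N$, cocompactly because $S\times T_G$ has finite index in the full translation lattice $\integers^N$. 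So in the paper $\phi$ is implicit (conjugation inside $\ddot G$) and the cubulation comes for free; note in particular that the geometric route never needs an integral splitting $\integers^N=T_G\oplus S$ --- the finite-index sublattice $S\oplus T_G$ suffices for cocompactness, which is exactly how the paper dodges the splitting obstruction you flag. Your route instead manufactures $\phi_0$ by pure representation theory: semisimplicity of $\rationals[P_G]$ embeds $\bar\theta_\rationals$ as a summand of $\rationals[P_G]^{\oplus k}$, you let $\phi_0$ act on a full invariant lattice in the complement $\chi$, and the total holonomy $\phi_0\oplus\bar\theta$ becomes $\rationals$-conjugate to $\rho_{\mathrm{reg}}^{\oplus k}$, whose image consists of permutation matrices in $O(n+m,\integers)$; hyperoctahedrality of $\Gamma=\integers^m\rtimes_\phi G$ then hands you the proper, cocompact action via Theorem~\ref{thm:cubulatingcrystallographic}. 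What this buys is independence from Lemma~\ref{lem:cubulating} and from any analysis of $\Aut(\mathbf R_N)$, an explicit $\phi$, and a transparent explanation of why only $\reals$- (indeed $\rationals$-) conjugacy matters; the bounds on $m$ are comparable in the two approaches ($m\leq n(|P_G|-1)$ either way). One small point you assert without justification: that $\integers^m\oplus T_G$ really is the translation lattice of $\Gamma$, so that $P_\Gamma\cong P_G$ with holonomy $\phi_0\oplus\bar\theta$ --- which your applications of the hyperoctahedral definition and of Theorem~\ref{thm:cubulatingcrystallographic} need. This does follow: since $\bar\theta$ is faithful, any element of $\Gamma$ centralizing $\integers^m\oplus T_G$ has trivial image in $P_G$ and hence lies in that subgroup, so $\integers^m\oplus T_G$ is self-centralizing and is therefore the unique maximal abelian normal subgroup; it would be worth a sentence in a final write-up.
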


\begin{proof}
By Lemma~\ref{lem:cubulating}, there exists $N\geq n$ such that $G$ acts properly on $\mathbf R_N$.  Let $\ddot G=\Aut(\mathbf R_N)$, so that we have a homomorphism $G\rightarrow\ddot G$ with finite kernel.  Now, $\ddot G$ is the automorphism group of a cocompact lattice in $\Euclidean^N$, namely the 0-skeleton of $\mathbf R_N$, and thus $\ddot G$ is an $N$-dimensional hyperoctahedral crystallographic group.  Let $K=\ker(G\rightarrow\ddot G)$, and let $\mathbb F\cong\Euclidean^n$ be a $G$-invariant subspace of $\Euclidean^N$.  Since $K$ acts trivially on $\Euclidean^N$, the action of $K$ on $\mathbb F$ is trivial, and thus $K=\{1\}$ since $G$ acts faithfully on $\mathbb F$, being $n$-dimensional crystallographic.  Hence $G\leq\ddot G$.  Also, $\ddot G$ contains a maximal subgroup $S\cong\integers^m$ generated by $m=N-n$ linearly independent translations, each orthogonal to $\mathbb F$, such that $S\cap T_{_G}=\{1\}$.  Now $S\times T_{_G}\cong\integers^N$ is a subgroup of $\ddot G$ consisting of translations, and,
since $T_{_G}$ is normal in $G$, for any $s\in S$ and $g\in G$, we have that $gsg^{-1}\in S\cup T_{_G}$, since $g\in\ddot G$ and $\ddot G$ is crystallographic, and $gsg^{-1}\not\in T_{_G}$, since $g\in G$.  Hence the resulting semidirect product $\widehat G\cong S\rtimes G\leq \ddot G$ acts properly on $\mathbf R_N$, extending the cocompact action of $S\times T_{_G}$.
\end{proof}

Corollary~C from the Introduction follows easily: if $Z^m\times G$ is cocompactly cubulated for suitably chosen $m$, then $\integers^M\times(\integers^m\rtimes G)\cong\integers^{M+m}\rtimes G$ is cocompactly cubulated for all $M\geq 0$.

We conclude by solving Problem~16.10 of~\cite{WiseIsraelHierarchy}, in which Wise asked whether $\ddot G$ can be chosen, for any virtually-$\integers^n$ group $G$, to have the form $\integers^m\times G$.  This is not the case:

\begin{exmp}\label{exmp:trianglegroup}
Let $W\cong\langle a,b,c\mid[a,b],c^6,cac^{-1}=b,cbc^{-1}=a^{-1}b\rangle\cong\integers^2\rtimes\integers_6$.  As indicated at left in Figure~\ref{fig:hex2}, $W$ acts on the tiling of $\Euclidean^2$ by regular hexagons; the translations $a,b$ are along the illustrated vectors, and $c$ is the 6-fold rotation taking $a$ to $b$.

Suppose for some $m\geq 0$ that $\integers^m\times W$ acts properly and cocompactly on a CAT(0) cube complex $\mathbf X$.  Hence $\integers^m\times W$ acts with a single orbit of $(m+2)$-cubes on $\mathbf R_{m+2}$.  Identifying $\mathbf R_{m+2}$ with $\Euclidean^{m+2}$ and applying Bieberbach's theorem shows that the action of $\integers^m\times W$ on $\Euclidean^{m+2}$ preserves the tiling by $(m+2)$-cubes, and $W$ stabilizes a 2-dimensional subspace $\mathbb F$ and fixes pointwise an $m$-dimensional subspace $\mathbb P$ orthogonal to $\mathbb F$.  This induces an action of $P_{_W}=P_{\integers^m\times W}$ on an $(m+2)$-cube $C$ with an $m$-dimensional subspace $S$ fixed by $P_{_W}$.  The subspace $S$ is orthogonal to a hexagon $H\subset C$ that contains the origin and on which $c$ acts as a 6-fold rotation.  Thus $P_{_W}$ permutes the diagonals of $C$ orthogonal to $C$, so since $S$ is fixed, there is only one such diagonal.  Therefore $c$ acts as a 6-fold rotation of a 3-cube, which is impossible; every order-6 automorphism of a 3-cube is a rotation-reflection.

\begin{figure}[h]
  \includegraphics[width=0.9\textwidth]{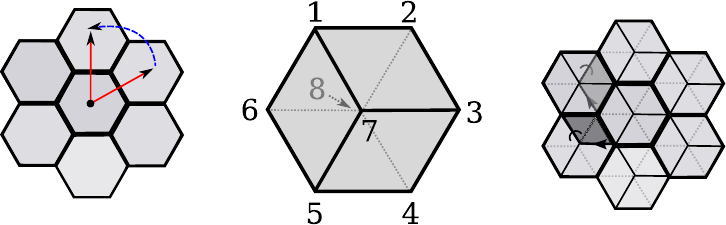}\\
  \caption{At left is the $W$-invariant tiling.  The central picture suggests the orientation-reversing $P_{_W}$-action on a 3-cube.  At right is part of $\mathbf R_3$, suggesting the $\integers\rtimes W$-action: $c$ takes one darkened square to the other and the $\integers$ semidirect factor acts as translation along the line through the point $x$ and the barycenter of the central cube.}\label{fig:hex2}
\end{figure}

By contrast, let $W$ act on $\integers$ by $a(1)=b(1)=1$ and $c(1)=-1$.  The resulting semidirect product $\integers\rtimes W$ acts properly and cocompactly on $\mathbf R_3$.  Indeed, as shown in the middle picture in Figure~\ref{fig:hex2}, $\integers_6$ acts as an orientation-reversing automorphism of a 3-cube -- in the notation of Figure~\ref{fig:hex2}, $c$ is the permutation $(123456)(78)$.  The plane containing the barycenters of the 3-cubes shown in Figure~\ref{fig:hex2} is stabilized by $c$, which acts as a 6-fold rotation.  The generator of the $\integers$ factor acts as a translation orthogonal to this plane, and $c$ acts on the axis of this translation by reversing signs.  Hence the action of $P_{_W}$ induces the given action of $\langle c\rangle$ on $\integers$.

Finally, Example~16.11 of~\cite{WiseIsraelHierarchy}, due to Dunbar, provides a torsion-free, 3-dimensional crystallographic group $D$ that is not cocompactly cubulated.  The group $D$ is obtained from the above presentation of $W$ by dropping the relation $c^6$.  Thus $D$ is virtually $\integers^3$, and $c$ acts as a ``screw motion'', translating and rotating by $\frac{\pi}{3}$ along an axis orthogonal to an $\langle a,b\rangle$-invariant plane.  In fact, no $\integers^m\times D$ is cocompactly cubulated: if $\integers^m\times D$ is cocompactly cubulated, then it acts with a single orbit of $(m+3)$-cubes on $\mathbf R_{m+3}$, and there is a $D$-invariant copy $\mathbb F$ of $\Euclidean^3$ containing a $\langle a,b\rangle$-invariant plane $\mathbb F_0$.  Moreover, $\mathbb F\cong \mathbb F_0\times\mathbb F_1$, where $\mathbb F_1$ is the screw-axis for $c$.  As before, this implies that the point group $P_{\integers^m\times D}\cong P_{_D}\cong\integers_6$ acts on an $(m+3)$-cube $C$, stabilizing a 3-dimensional subspace $H=C\cap\mathbb F$ and acting trivially on an $m$-dimensional subspace $S$.  Arguing as above, one now finds that the image of $c$ in $P_{_D}\cong\integers_6$, which acts trivially on the screw-axis, acts as a 6-fold rotation of a 3-cube, which is impossible.

Morally, adding dimensions only aids cocompact cubulation if the point group is allowed to act nontrivially, if necessary, on the group of translations in the new directions.
\end{exmp}

\bibliographystyle{alpha}
\bibliography{Bieberbachbib}
\end{document}